\begin{document}


\newtheorem{theorem}{Théorème}[section]
\newtheorem{theore}{Théorème}
\newtheorem{definition}[theorem]{Définition}
\newtheorem{proposition}[theorem]{Proposition}
\newtheorem{corollary}[theorem]{Corollaire}
\newtheorem{con}{Conjecture}
\newtheorem*{remark}{Remarque}
\newtheorem*{remarks}{Remarques}
\newtheorem*{pro}{Problème}
\newtheorem*{examples}{Exemples}
\newtheorem*{example}{Exemple}
\newtheorem{lemma}[theorem]{Lemme}


\title{Combinatoire des sous-groupes de congruence du groupe modulaire II}

\author{Flavien Mabilat}

\date{}

\keywords{modular group, congruence subgroups, irreducibility}

\address{
Flavien Mabilat,
Laboratoire de Mathématiques de Reims,
UMR9008 CNRS et Université de Reims Champagne-Ardenne, 
U.F.R. Sciences Exactes et Naturelles 
Moulin de la Housse - BP 1039 
51687 Reims cedex 2,
France
}
\email{flavien.mabilat@univ-reims.fr}

\maketitle

\selectlanguage{french}
\begin{abstract}
Dans cet article, on souhaite étudier la combinatoire des sous-groupes de congruence du groupe modulaire. Pour cela, on considère une équation matricielle liée à celle qui apparaît lors de l'étude des frises de Coxeter et on étudie ces solutions irréductibles. En particulier, on donne de nouvelles propriétés des solutions monomiales minimales. De plus, on introduit la notion de solutions dynomiales minimales et on donne des conditions suffisantes d'irréductibilité pour celles-ci. 
\\
\end{abstract}

\selectlanguage{english}
\begin{abstract}
In this paper, we study combinatorics of congruence subgroups of the modular group. More precisely, we consider the matrix equation that naturally arises in the theory of Coxeter friezes and investigate its irreducible solutions. We give new properties for minimal monomial solutions. Furthermore, we introduce the notion of minimal dynomial solutions and study their irreducibility.

\end{abstract}

\selectlanguage{french}

\thispagestyle{empty}

{\bf Mots clés:} groupe modulaire; sous-groupes de congruence; irréductibilité
\\
\begin{flushright}
\og \textit{Il est très difficile d'imaginer quelque chose de simple.} \fg
\\ Pierre MacOrlan , \textit{Villes}
\end{flushright}

\section{Introduction}

L'une des propriétés les plus intéressantes du groupe modulaire 
\[SL_{2}(\mathbb{Z})=
\left\{
\begin{pmatrix}
a & b \\
c & d
   \end{pmatrix}
 \;\vert\;a,b,c,d \in \mathbb{Z},\;
 ad-bc=1
\right\}\] est l'existence de parties génératrices à deux éléments. On peut, en particulier, prendre les deux matrices suivantes (voir par exemple \cite{A}): \[T=\begin{pmatrix}
 1 & 1 \\[2pt]
    0    & 1 
   \end{pmatrix}, S=\begin{pmatrix}
   0 & -1 \\[2pt]
    1    & 0 
   \end{pmatrix}.
 \] 
\\À partir de ce choix, on peut montrer que pour toute matrice $A$ de $SL_{2}(\mathbb{Z})$ il existe un entier strictement positif $n$ et des entiers strictement positifs $a_{1},\ldots,a_{n}$ tels que \[A=T^{a_{n}}ST^{a_{n-1}}S\cdots T^{a_{1}}S=\begin{pmatrix}
   a_{n} & -1 \\[4pt]
    1    & 0 
   \end{pmatrix}
\begin{pmatrix}
   a_{n-1} & -1 \\[4pt]
    1    & 0 
   \end{pmatrix}
   \cdots
   \begin{pmatrix}
   a_{1} & -1 \\[4pt]
    1    & 0 
    \end{pmatrix}.\]
On utilisera la notation $M_{n}(a_{1},\ldots,a_{n})$ pour désigner la matrice \[\begin{pmatrix}
   a_{n} & -1 \\[4pt]
    1    & 0 
   \end{pmatrix}
\begin{pmatrix}
   a_{n-1} & -1 \\[4pt]
    1    & 0 
   \end{pmatrix}
   \cdots
   \begin{pmatrix}
   a_{1} & -1 \\[4pt]
    1    & 0 
    \end{pmatrix}.\] Ces matrices interviennent également dans la théorie des frises de Coxeter (voir par exemple \cite{Cox,CH} pour la définition des frises de Coxeter). En effet, les solutions de l'équation $M_{n}(a_{1},\ldots,a_{n})=-Id$ permettent de construire des frises de Coxeter, et, à partir d'une telle frise, on peut obtenir une solution de cette équation (voir \cite{BR} et \cite{CH} proposition 2.4). Les frises de Coxeter possèdent par ailleurs des connections avec de nombreux autres domaines mathématiques (voir par exemple \cite{Mo1}).
\\
\\Ceci amène naturellement à l'étude de l'équation généralisée suivante: \begin{equation}
\label{a}
M_{n}(a_1,\ldots,a_n)=\pm Id.
\end{equation} V.Ovsienko (voir \cite{O} Théorèmes 1 et 2) a résolu celle-ci sur $\mathbb{N}^{*}$ et donné une description combinatoire des solutions en terme de découpages de polygones (généralisant par ailleurs un théorème antérieur dû à Conway et Coxeter, voir \cite{CoCo,Cox,Hen}). On dispose également des solutions de cette équation sur $\mathbb{N}$ (voir \cite{C} Théorème 3.1), sur $\mathbb{Z}$ (voir \cite{C} Théorème 3.2) et sur $\mathbb{Z}[\alpha]$ avec $\alpha$ un nombre complexe transcendant (voir \cite{Ma2} Théorème 2.7). On peut aussi étudier l'équation \eqref{a} en remplaçant $\pm Id$ par $\pm M$ avec $M$ une matrice du groupe modulaire, notamment pour $M=S$ et $M=T$ (voir \cite{Ma3}).
\\
\\On va s'intéresser ici aux cas des anneaux $\mathbb{Z}/N\mathbb{Z}$, c'est-à-dire à la résolution sur $\mathbb{Z}/N\mathbb{Z}$ de l'équation :
\begin{equation}
\label{p}
\tag{$E_{N}$}
M_{n}(a_1,\ldots,a_n)=\pm Id.
\end{equation} On dira, en particulier, qu'une solution de \eqref{p} est de taille $n$ si cette solution est un $n$-uplet d'éléments de $\mathbb{Z}/N\mathbb{Z}$. L'étude de l'équation ci-dessus permet notamment de chercher toutes les écritures des éléments des sous-groupes de congruence ci-dessous: \[\hat{\Gamma}(N)=\{A \in SL_{2}(\mathbb{Z})~{\rm tel~que}~A= \pm Id~( {\rm mod}~N)\}\] sous la forme $M_{n}(a_1,\ldots,a_n)$ avec les $a_{i}$ des entiers strictement positifs. 
\\
\\L'équation \eqref{p} a déjà été étudiée dans des travaux précédents (voir \cite{Ma1,M}). Pour mener à bien cette étude on avait utilisé une notion de solutions irréductibles à partir de laquelle on peut construire l'ensemble des solutions (voir section suivante). Ceci nous avait permis de résoudre complètement \eqref{p} pour $N \leq 6$ (voir \cite{M} section 4). On avait également obtenu des résultats généraux d'irréductibilité en définissant notamment la notion de solutions monomiales minimales (voir \cite{M} section 3.3 et la section suivante). D'autres éléments pouvant être reliés aux cas des anneaux $\mathbb{Z}/N\mathbb{Z}$, avec $N$ premier, peuvent également être trouvés dans \cite{Mo2}.
\\
\\L'objectif ici est de poursuivre cette étude en obtenant des résultats sur la taille et l'irréductibilité des solutions monomiales minimales (voir section \ref{MM}) et d'obtenir des résultats d'irréductibilité pour d'autres solutions (voir section \ref{DM}).

\section{Définitions et résultats principaux}\label{RP}    

L'objectif de cette section est de rappeler les définitions, introduites notamment dans \cite{C} et \cite{M}, utiles à l'étude de l'équation \eqref{p} et d'énoncer les résultats principaux de ce texte. Sauf mention contraire, $N$ désigne un entier naturel supérieur à $2$ et si $a \in \mathbb{Z}$ on note $\overline{a}=a+N\mathbb{Z}$. 

\begin{definition}[\cite{WZ}, définition 1.8]
\label{21}

Soient $(\overline{a_{1}},\ldots,\overline{a_{n}}) \in (\mathbb{Z}/N \mathbb{Z})^{n}$ et $(\overline{b_{1}},\ldots,\overline{b_{m}}) \in (\mathbb{Z}/N \mathbb{Z})^{m}$. On définit l'opération ci-dessous: \[(\overline{a_{1}},\ldots,\overline{a_{n}}) \oplus (\overline{b_{1}},\ldots,\overline{b_{m}})= (\overline{a_{1}+b_{m}},\overline{a_{2}},\ldots,\overline{a_{n-1}},\overline{a_{n}+b_{1}},\overline{b_{2}},\ldots,\overline{b_{m-1}}).\] Le $(n+m-2)$-uplet obtenu est appelé la somme de $(\overline{a_{1}},\ldots,\overline{a_{n}})$ avec $(\overline{b_{1}},\ldots,\overline{b_{m}})$.

\end{definition}

\begin{examples}

{\rm Voici quelques exemples de sommes :
\begin{itemize}
\item $(\overline{1},\overline{2},\overline{3}) \oplus (\overline{4},\overline{1},\overline{3},\overline{2})= (\overline{3},\overline{2},\overline{7},\overline{1},\overline{3})$;
\item $(\overline{4},\overline{0},\overline{1},\overline{2}) \oplus (\overline{-1},\overline{0},\overline{1}) = (\overline{5},\overline{0},\overline{1},\overline{1},\overline{0})$;
\item $n \geq 2$, $(\overline{a_{1}},\ldots,\overline{a_{n}}) \oplus (\overline{0},\overline{0}) = (\overline{0},\overline{0}) \oplus (\overline{a_{1}},\ldots,\overline{a_{n}})=(\overline{a_{1}},\ldots,\overline{a_{n}})$.
\end{itemize}
}
\end{examples}

En particulier, si $(\overline{b_{1}},\ldots,\overline{b_{m}})$ est une solution de \eqref{p} alors la somme $(\overline{a_{1}},\ldots,\overline{a_{n}}) \oplus (\overline{b_{1}},\ldots,\overline{b_{m}})$ est solution de \eqref{p} si et seulement si $(\overline{a_{1}},\ldots,\overline{a_{n}})$ est solution de \eqref{p} (voir \cite{C,WZ} et \cite{M} proposition 3.7). En revanche, l'opération $\oplus$ n'est ni commutative ni associative (voir \cite{WZ} exemple 2.1). 

\begin{definition}[\cite{WZ}, définition 1.5]
\label{22}

 Soient $(\overline{a_{1}},\ldots,\overline{a_{n}}) \in (\mathbb{Z}/N \mathbb{Z})^{n}$ et $(\overline{b_{1}},\ldots,\overline{b_{n}}) \in (\mathbb{Z}/N \mathbb{Z})^{n}$. On dit que $(\overline{a_{1}},\ldots,\overline{a_{n}}) \sim (\overline{b_{1}},\ldots,\overline{b_{n}})$ si $(\overline{b_{1}},\ldots,\overline{b_{n}})$ est obtenu par permutation circulaire de $(\overline{a_{1}},\ldots,\overline{a_{n}})$ ou de $(\overline{a_{n}},\ldots,\overline{a_{1}})$.

\end{definition}

On montre facilement que $\sim$ est une relation d'équivalence sur les $n$-uplets d'éléments de $\mathbb{Z}/N \mathbb{Z}$ (voir \cite{WZ}, lemme 1.7). De plus, si $(\overline{a_{1}},\ldots,\overline{a_{n}}) \sim (\overline{b_{1}},\ldots,\overline{b_{n}})$ alors $(\overline{a_{1}},\ldots,\overline{a_{n}})$ est solution de \eqref{p} si et seulement si $(\overline{b_{1}},\ldots,\overline{b_{n}})$ est solution de \eqref{p} (voir \cite{C} proposition 2.6).

\begin{definition}[\cite{C}, définition 2.9]
\label{222}

Une solution $(\overline{c_{1}},\ldots,\overline{c_{n}})$ avec $n \geq 3$ de \eqref{p} est dite réductible s'il existe une solution de \eqref{p} $(\overline{b_{1}},\ldots,\overline{b_{l}})$ et un $m$-uplet $(\overline{a_{1}},\ldots,\overline{a_{m}})$ d'éléments de $\mathbb{Z}/N \mathbb{Z}$ tels que \begin{itemize}
\item $(\overline{c_{1}},\ldots,\overline{c_{n}}) \sim (\overline{a_{1}},\ldots,\overline{a_{m}}) \oplus (\overline{b_{1}},\ldots,\overline{b_{l}})$;
\item $m \geq 3$ et $l \geq 3$.
\end{itemize}
Une solution est dite irréductible si elle n'est pas réductible. 

\end{definition}

\begin{remark} 

{\rm On ne considère pas $(\overline{0},\overline{0})$ comme une solution irréductible de \eqref{p}.}

\end{remark}

\indent L'un de nos objectifs principaux est de trouver des solutions irréductibles de l'équation \eqref{p}. En particulier, on a introduit dans \cite{M} la notion de solutions monomiales rappelée ci-dessous :

\begin{definition}
\label{23}

i)~Soient $n \in \mathbb{N}^{*}$ et $\overline{k} \in \mathbb{Z}/N\mathbb{Z}$. On appelle solution $(n,\overline{k})$-monomiale un $n$-uplet d'éléments de $\mathbb{Z}/ N \mathbb{Z}$ constitué uniquement de $\overline{k} \in \mathbb{Z}/N\mathbb{Z}$ et solution de \eqref{p}.
\\
\\ ii)~On appelle solution monomiale une solution pour laquelle il existe $m \in \mathbb{N}^{*}$ et $\overline{l} \in \mathbb{Z}/N\mathbb{Z}$ tels qu'elle est $(m,\overline{l})$-monomiale.
\\
\\ iii)~On appelle solution $\overline{k}$-monomiale minimale une solution $(n,\overline{k})$-monomiale avec $n$ le plus petit entier pour lequel il existe une solution $(n,\overline{k})$-monomiale.
\\
\\ iv)~On appelle solution monomiale minimale une solution $\overline{k}$-monomiale minimale pour un $\overline{k} \in \mathbb{Z}/N\mathbb{Z}$.

\end{definition}

\noindent On connaît déjà un certain nombre de propriétés d'irréductibilité pour ces solutions. Celles-ci seront évoquées dans la section suivante où on démontrera également le résultat ci-dessous :

\begin{theorem}
\label{24}

Soit $N$ un entier pair supérieur à 4. On a deux cas :
\begin{itemize}
\item Si $4 \mid N$ alors la solution $\overline{\frac{N}{2}}$-monomiale minimale de \eqref{p} est de taille 4;
\item Si $4 \nmid N$ alors la solution $\overline{\frac{N}{2}}$-monomiale minimale de \eqref{p} est de taille 6.
\\
\end{itemize}
De plus, celle-ci est irréductible dans les deux cas.

\end{theorem}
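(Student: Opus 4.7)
The plan is to observe that $(\overline{k},\ldots,\overline{k})$ of length $n$ is a solution of \eqref{p} if and only if $M^n = \pm Id$, where $M = \begin{pmatrix} \overline{k} & -\overline{1} \\ \overline{1} & \overline{0} \end{pmatrix}$, and to compute these powers directly. The crucial arithmetic input is $\overline{2k} = \overline{0}$, together with the dichotomy: if $4 \mid N$, then $\overline{k}^2 = \overline{0}$ (since $k^2 = N^2/4$ is a multiple of $N$); whereas if $4 \nmid N$, writing $N = 2m$ with $m$ odd, the difference $k^2 - k = m(m-1)$ is divisible by $2m = N$ (as $m-1$ is even), so $\overline{k}^2 = \overline{k}$ and every positive power of $\overline{k}$ collapses to $\overline{k}$.

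In the case $4 \mid N$, these relations drastically simplify $M^n$: an iterative computation gives $M^4 = Id$, while a routine inspection of $M, M^2, M^3$ rules out $M^n = \pm Id$ for $n \leq 3$ when $N \geq 4$. In the case $4 \nmid N$, every entry of $M^n$ is a short integer linear combination of $\overline{1}$ and $\overline{k}$ that is trivial to propagate; using $\overline{2k} = \overline{0}$ to simplify, I obtain $M^6 = -Id$ and exclude $n \leq 5$ by the same inspection.

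For irreducibility, I first classify the small solutions of \eqref{p}. A direct expansion of $M_3(b_1, b_2, b_3)$ shows that the size-$3$ solutions are exactly $(\overline{1}, \overline{1}, \overline{1})$ and $(\overline{-1}, \overline{-1}, \overline{-1})$. A similar but longer expansion of $M_4(a_1, a_2, a_3, a_4)$ yields two families: $(\overline{a}, \overline{b}, \overline{-a}, \overline{-b})$ with $\overline{ab} = \overline{0}$ (giving $Id$), and $(\overline{a}, \overline{b}, \overline{a}, \overline{b})$ with $\overline{ab} = \overline{2}$ (giving $-Id$). If the monomial solution is reducible as $(\overline{a_1},\ldots,\overline{a_m}) \oplus (\overline{b_1},\ldots,\overline{b_l})$ with $m, l \geq 3$ and $m + l - 2 = n$, then the definition of $\oplus$ forces $\overline{a_i} = \overline{k}$ for $2 \leq i \leq m-1$ and similarly for $b$, so each piece has the shape $(\overline{x}, \overline{k}, \ldots, \overline{k}, \overline{y})$.

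When $n = 4$, only $(m, l) = (3, 3)$ is possible; the middle entry of a size-$3$ piece is $\overline{k}$, and the classification forces $\overline{k} = \overline{\pm 1}$, impossible for $N \geq 4$. When $n = 6$, the possibilities are $(3,5)$, $(5,3)$, $(4,4)$: the first two give the same contradiction via the size-$3$ piece; for $(4,4)$, the piece $(\overline{a_1}, \overline{k}, \overline{k}, \overline{a_4})$ inserted into the size-$4$ classification forces either $\overline{k}^2 = \overline{0}$ (impossible under $4 \nmid N$) or $\overline{k}^2 = \overline{2}$, the latter combined with $\overline{k}^2 = \overline{k}$ giving $\overline{k} = \overline{2}$, i.e.\ $N \mid N/2 - 2$, which forces $N = 4$, again a contradiction. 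The main technical hurdle is the size-$4$ classification: condensing the four matrix-entry equations into a single quadratic in $a_1 a_2$ is patient but straightforward, and once it is done the rest is a finite case check.
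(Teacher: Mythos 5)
Your proposal is correct and follows essentially the same route as the paper: establish the minimal size by computing the relevant power of $M_{1}(\overline{N/2})$ and excluding the smaller sizes, then rule out reducibility via the classification of solutions of sizes $3$ and $4$ (which the paper simply cites from earlier work) applied to the possible decompositions $(3,3)$, $(3,5)$, $(5,3)$, $(4,4)$. The only notable difference is cosmetic: your idempotent relation $\overline{k}^{2}=\overline{k}$ when $4\nmid N$ collapses the computation of $M^{6}=-Id$ more cleanly than the paper's term-by-term parity reductions of the degree-$6$ continuant entries.
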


\noindent Pour obtenir de nouveaux résultats d'irréductibilité on définit une nouvelle classe de solutions de \eqref{p}.

\begin{definition}
\label{25}

i)~Soient $n \in \mathbb{N}^{*}$ pair et $\overline{k} \in \mathbb{Z}/N\mathbb{Z}$. On appelle solution $(n,\overline{k})$-dynomiale une solution de taille $n$ de \eqref{p} de la forme $(\overline{k},\overline{-k},\ldots,\overline{k},\overline{-k})$.
\\
\\ ii)~On appelle solution dynomiale une solution pour laquelle il existe $m \in \mathbb{N}^{*}$ et $\overline{l} \in \mathbb{Z}/N\mathbb{Z}$ tels qu'elle est $(m,\overline{l})$-dynomiale.
\\
\\ iii)~On appelle solution $\overline{k}$-dynomiale minimale une solution $(n,\overline{k})$-dynomiale avec $n$ le plus petit entier pour lequel il existe une solution $(n,\overline{k})$-dynomiale.
\\
\\ iv)~On appelle solution dynomiale minimale une solution $\overline{k}$-dynomiale minimale pour un $\overline{k} \in \mathbb{Z}/N\mathbb{Z}$.
	
\end{definition}	

\begin{remark}  

{\rm Les propriétés suivantes des solutions dynomiales sont immédiates:
\begin{itemize}
\item Une solution dynomiale est toujours de taille paire.
\item Une solution $(n,\overline{k})$-dynomiale est équivalente à une solution $(n,\overline{-k})$-dynomiale.
\item Si $\overline{2k}=\overline{0}$ alors une solution $(n,\overline{k})$-dynomiale est une solution $(n,\overline{k})$-monomiale.
\end{itemize}}

\end{remark}

\noindent On dispose pour cette classe de solutions du résultat d'irréductibilité suivant démontré dans la section \ref{DM} :

\begin{theorem}
\label{26}

Soient $N$ un nombre premier supérieur à 5 et $\overline{k} \in \mathbb{Z}/N\mathbb{Z}$. On suppose que les deux conditions suivantes sont vérifiées : 
\begin{itemize}
\item  $\overline{k} \neq \overline{0}$;
\item $\overline{k}^{2}+\overline{8}$ n'est pas un carré dans $\mathbb{Z}/N\mathbb{Z}$.
\end{itemize}
La solution $\overline{k}$-dynomiale minimale de \eqref{p} est irréductible.

\end{theorem}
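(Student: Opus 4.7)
Supposons par l'absurde que la solution $\overline{k}$-dynomiale minimale de \eqref{p}, de taille $n_{0}=2s_{0}$, soit réductible. Quitte à choisir un représentant adéquat de sa classe sous $\sim$, on peut l'écrire sous la forme $(\overline{a_{1}},\ldots,\overline{a_{m}})\oplus(\overline{b_{1}},\ldots,\overline{b_{l}})$ avec $m,l\geq 3$, $m+l=n_{0}+2$, et $(\overline{b_{1}},\ldots,\overline{b_{l}})$ solution de \eqref{p}. La comparaison position par position impose que les parties intérieures $(\overline{a_{2}},\ldots,\overline{a_{m-1}})$ et $(\overline{b_{2}},\ldots,\overline{b_{l-1}})$ alternent entre $\overline{k}$ et $\overline{-k}$. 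Puisque $n_{0}$ est pair, $m$ et $l$ ont la même parité ; la symétrie $\overline{k}\leftrightarrow\overline{-k}$ (qui laisse invariante la quantité $\overline{k}^{2}+\overline{8}$) permet de supposer $\overline{b_{2}}=\overline{k}$.

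Posons $A:=M_{2}(\overline{k},\overline{-k})=\begin{pmatrix}-\overline{k}^{2}-\overline{1} & \overline{k} \\ \overline{k} & -\overline{1}\end{pmatrix}$, de polynôme caractéristique $X^{2}-\overline{t}X+\overline{1}$ avec $\overline{t}=-\overline{k}^{2}-\overline{2}$. Par Cayley-Hamilton, $A^{j}=P_{j}A-P_{j-1}\,Id$ où la suite $(P_{j})\subset\mathbb{Z}/N\mathbb{Z}$ est définie par $P_{0}=\overline{0}$, $P_{1}=\overline{1}$ et $P_{j+1}=\overline{t}\,P_{j}-P_{j-1}$. La minimalité de $n_{0}$ équivaut alors à : $s_{0}$ est le plus petit entier $\geq 1$ tel que $P_{s_{0}}=\overline{0}$ dans $\mathbb{Z}/N\mathbb{Z}$.

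\emph{Premier cas : $l=2s$ pair.} Le produit intérieur $M_{l-2}(\overline{b_{2}},\ldots,\overline{b_{l-1}})$ vaut $A^{s-1}$, et l'équation $M_{l}=\varepsilon\, Id$ (avec $\varepsilon\in\{\pm 1\}$) se réécrit $M_{1}(\overline{b_{l}})\,A^{s-1}\,M_{1}(\overline{b_{1}})=\varepsilon\, Id$. L'identification coefficient par coefficient, jointe à $\overline{k}\neq\overline{0}$, fournit $\overline{b_{1}}=-\overline{b_{l}}$ puis $\overline{k}^{2}\,P_{s-1}\,(\overline{1}+\varepsilon P_{s-1})=\overline{0}$, d'où $P_{s-1}=\overline{0}$ ou $P_{s-1}=-\varepsilon$. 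Si $P_{s-1}=\overline{0}$, alors $A^{s-1}=\pm Id$ livre une solution dynomiale de taille $2(s-1)<n_{0}$ ; si $P_{s-1}=-\varepsilon$, la récurrence entraîne $P_{s}=\overline{0}$, livrant une solution dynomiale de taille $2s=l<n_{0}$. Dans les deux sous-cas la minimalité de $n_{0}$ est contredite.

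\emph{Second cas : $l=2s+1$ impair.} Le produit intérieur vaut désormais $M_{l-2}(\overline{b_{2}},\ldots,\overline{b_{l-1}})=M_{1}(\overline{k})\cdot A^{s-1}$, et l'équation $M_{l}=\varepsilon\, Id$ devient $A^{s-1}=\varepsilon\,[M_{1}(\overline{b_{l}})M_{1}(\overline{k})]^{-1}\,M_{1}(\overline{b_{1}})^{-1}$. L'identification des entrées $(1,2)$ et $(2,1)$ produit $\overline{b_{l}}(\overline{b_{1}}+\overline{k})=\overline{2}$ ; celle des entrées $(1,1)$ et $(2,2)$, après substitution, fournit $\overline{b_{1}}=\overline{b_{l}}$. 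Par conséquent $\overline{b_{l}}$ est racine du polynôme $X^{2}+\overline{k}X-\overline{2}$, dont le discriminant vaut précisément $\overline{k}^{2}+\overline{8}$ ; l'hypothèse interdisant à ce dernier d'être un carré dans $\mathbb{Z}/N\mathbb{Z}$, un tel $\overline{b_{l}}$ ne peut exister, contradiction. L'obstacle principal se situe ici : il faut extraire \emph{deux} relations indépendantes ($\overline{b_{l}}(\overline{b_{1}}+\overline{k})=\overline{2}$ et $\overline{b_{1}}=\overline{b_{l}}$) des quatre équations scalaires, afin d'éliminer l'inconnue auxiliaire $P_{s-1}$ et de faire apparaître précisément le discriminant $\overline{k}^{2}+\overline{8}$ figurant dans l'énoncé.
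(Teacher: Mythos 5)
Votre preuve est correcte et suit essentiellement la même stratégie que celle de l'article : raisonnement par l'absurde sur une décomposition $\oplus$, discussion selon la parité de $l$, contradiction avec la minimalité dans le cas pair, et apparition du polynôme $X^{2}+\overline{k}X-\overline{2}$ de discriminant $\overline{k}^{2}+\overline{8}$ dans le cas impair. La seule différence est d'emballage technique : vous exprimez les conditions de bord via les puissances de $A=M_{2}(\overline{k},\overline{-k})$ et la suite de type Tchebychev $P_{j}$ (Cayley--Hamilton), là où l'article passe par les continuants $K_{i}$ et un lemme dédié (lemme \ref{42}) ; les identifications que vous affirmez (notamment $\overline{b_{1}}=\overline{b_{l}}$ et $\overline{b_{l}}(\overline{b_{1}}+\overline{k})=\overline{2}$ dans le cas impair, et $P_{s-1}\in\{\overline{0},-\varepsilon\}$ entraînant $P_{s}=\overline{0}$ dans le cas pair) se vérifient bien par le calcul.
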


Ce théorème permet d'obtenir plusieurs résultats d'irréductibilité intéressants exposés dans la section \ref{app}. On montre notamment dans celle-ci que la solution $\overline{2}$-dynomiale minimale de \eqref{p} est irréductible lorsque $N$ est un nombre premier supérieur à 5 vérifiant $N \not\equiv \pm 1 [12]$.

\section{Propriétés des solutions monomiales minimales}
\label{MM}

Comme nous venons de l'évoquer, les solutions monomiales minimales possèdent un certain nombre de propriétés intéressantes (voir \cite{M} section 3.3). On dispose notamment des deux résultats d'irréductibilité énoncés ci-dessous :

\begin{theorem}
\label{31}

Soit $N$ un entier naturel supérieur à 2.
\\i)~(\cite{M}, Théorème 3.16) Si $N$ est premier alors les solutions monomiales minimales de \eqref{p} différentes de $(\overline{0},\overline{0})$ sont irréductibles.
\\ii)~(\cite{M}, Théorème 2.6) Si $N \geq 3$, $(\overline{2},\ldots,\overline{2}) \in (\mathbb{Z}/N\mathbb{Z})^{N}$ est une solution monomiale minimale irréductible de \eqref{p}.

\end{theorem}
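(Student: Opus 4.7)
The plan is to argue both parts by contradiction from a hypothetical decomposition; for (ii) this amounts to a direct matrix computation with $A = \begin{pmatrix} 2 & -1 \\ 1 & 0 \end{pmatrix}$, while (i) additionally uses the integral-domain property of $\mathbb{Z}/N\mathbb{Z}$ for prime $N$. I would first verify for (ii) that $N$ is the minimal monomial size: writing $J := A - \mathrm{Id} = \begin{pmatrix} 1 & -1 \\ 1 & -1 \end{pmatrix}$, one has $J^2 = 0$, hence $A^n = \mathrm{Id} + nJ$, so $A^n = \pm\mathrm{Id}$ forces $n \equiv 0 \pmod{N}$, giving minimal size $N$. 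Assuming then $(\overline{2}, \ldots, \overline{2}) \sim (\overline{a_1}, \ldots, \overline{a_m}) \oplus (\overline{b_1}, \ldots, \overline{b_l})$ with $m, l \geq 3$ and $m + l = N + 2$, the definition of $\oplus$ forces the interior entries to be $\overline{2}$, and $(\overline{a_1}, \overline{2}, \ldots, \overline{2}, \overline{a_m})$ is itself a solution of size $m$. Expanding $M_m = \begin{pmatrix} a_m & -1 \\ 1 & 0 \end{pmatrix} A^{m-2} \begin{pmatrix} a_1 & -1 \\ 1 & 0 \end{pmatrix}$ with $A^{m-2} = \mathrm{Id} + (m-2)J$, the bottom-right entry simplifies to $-(m-1)$; setting this $\equiv \pm 1 \pmod{N}$ gives $m \equiv 0 \text{ or } 2 \pmod{N}$, incompatible with $3 \leq m \leq N-1$.

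For (i), I would similarly take a $\overline{k}$-monomial minimal solution $(\overline{k}, \ldots, \overline{k})$ of length $n$ with $\overline{k} \neq \overline{0}$ and assume it decomposes as $(\overline{a_1}, \ldots, \overline{a_m}) \oplus (\overline{b_1}, \ldots, \overline{b_l})$ with $m, l \geq 3$ and both summands being solutions. The interior entries force $(\overline{b_1}, \overline{k}, \ldots, \overline{k}, \overline{b_l})$ of size $l < n$ and $(\overline{a_1}, \overline{k}, \ldots, \overline{k}, \overline{a_m})$ of size $m < n$ to be solutions. Writing $A = \begin{pmatrix} k & -1 \\ 1 & 0 \end{pmatrix}$ and letting $P_j(k)$ denote the continuants satisfying $P_j = k P_{j-1} - P_{j-2}$ (which describe the entries of $A^j$), I would expand $M_l = \begin{pmatrix} b_l & -1 \\ 1 & 0 \end{pmatrix} A^{l-2} \begin{pmatrix} b_1 & -1 \\ 1 & 0 \end{pmatrix} = \epsilon\, \mathrm{Id}$ for some $\epsilon \in \{\pm 1\}$. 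The system then yields successively $P_{l-2}(k) = -\epsilon$, $\overline{b_1} = \overline{b_l} = -\epsilon P_{l-3}(k)$, and, after eliminating $P_{l-4}$ via the recurrence, the key identity $P_{l-3}(k)\bigl(P_{l-3}(k) + \epsilon k\bigr) = 0$.

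The main obstacle is turning that product identity into a useful dichotomy, which is exactly where primality enters. Because $\mathbb{Z}/N\mathbb{Z}$ is an integral domain for prime $N$, either $P_{l-3}(k) = 0$ (so $\overline{b_1} = \overline{b_l} = \overline{0}$) or $P_{l-3}(k) = -\epsilon k$ (so $\overline{b_1} = \overline{b_l} = \overline{k}$). In the second case $(\overline{b_1}, \ldots, \overline{b_l}) = (\overline{k}, \ldots, \overline{k})$ would be a $\overline{k}$-monomial solution of size $l < n$, contradicting minimality. In the first case I would run the same analysis on the $a$-side: either $\overline{a_1} = \overline{a_m} = \overline{k}$ (again a monomial solution of size $m < n$, contradiction) or $\overline{a_1} = \overline{a_m} = \overline{0}$, and the gluing $\overline{a_1} + \overline{b_l} = \overline{k}$ built into $\oplus$ then forces $\overline{k} = \overline{0}$, contrary to the hypothesis. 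Every branch collapses, yielding the irreducibility.
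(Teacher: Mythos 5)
Your proof is correct, but there is nothing in this paper to compare it against: Théorème \ref{31} is imported from \cite{M} (Théorèmes 3.16 and 2.6 there) and is not reproved here. Your argument is nonetheless fully consistent with the techniques the paper does use. For (ii), your observation that $A=\mathrm{Id}+J$ with $J^{2}=0$, hence $A^{n}=\mathrm{Id}+nJ$, is exactly the closed form $M_{n}(2,\ldots,2)=\left(\begin{smallmatrix} n+1 & -n\\ n & -n+1\end{smallmatrix}\right)$ invoked in the proof of Théorème \ref{34}, and reading off the bottom-right entry $-(m-1)$ of $M_{m}(\overline{a_{1}},\overline{2},\ldots,\overline{2},\overline{a_{m}})$ to exclude $3\le m\le N-1$ is a clean way to conclude. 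For (i), your boundary analysis — a solution $(\overline{b_{1}},\overline{k},\ldots,\overline{k},\overline{b_{l}})$ forces $\overline{b_{1}}=\overline{b_{l}}=-\overline{\epsilon}P_{l-3}$ and $P_{l-3}(P_{l-3}+\overline{\epsilon k})=\overline{0}$, hence $\overline{b_{1}}(\overline{b_{1}}-\overline{k})=\overline{0}$ — is precisely the monomial analogue of Lemme \ref{42}, which the paper itself describes as the dynomial analogue of Proposition 3.15 of \cite{M}, the ingredient used there to prove this very theorem; and your endgame (each branch of the integral-domain dichotomy contradicts either minimality or $\overline{a_{1}+b_{l}}=\overline{k}\neq\overline{0}$) mirrors the structure of the proof of Théorème \ref{26}. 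I checked the individual steps (the identity $P_{l-3}(P_{l-3}+\overline{\epsilon k})=\overline{0}$ follows from the recurrence together with $\det A^{l-2}=\overline{1}$, and the degenerate cases $l=3,4$ cause no trouble), so the proposal stands as a complete and faithful reconstruction.
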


L'objectif de cette section est d'approfondir l'étude de ces solutions en obtenant notamment des éléments sur leur taille et de nouveaux résultats d'irréductibilité. Dans cette partie, $N$ est un entier naturel supérieur ou égal à 2. 

\subsection{Taille des solutions monomiales minimales}

L'un des problèmes soulevés lors de l'étude de ces solutions (voir \cite{M} problème 1) était d'avoir des informations sur la taille des solutions monomiales minimales. Notre objectif ici est de fournir des éléments de réponse à ce problème.
\\
\\Par le théorème \ref{31}, on sait que la solution $\overline{2}$-monomiale minimale de \eqref{p} est de taille $N$. L'étude des solutions de \eqref{p} pour les petites valeurs de $n$ permet également de répondre précisément à notre question pour certaines valeurs de $\overline{k}$.

\begin{proposition}[\cite{M}, section 3.1]
\label{32}

i)~\eqref{p} n'a pas de solution de taille 1.
\\ii)~$(\overline{0},\overline{0})$ est l'unique solution de \eqref{p} de taille 2.
\\iii)~$(\overline{1},\overline{1},\overline{1})$ et $(\overline{-1},\overline{-1},\overline{-1})$ sont les seules solutions de \eqref{p} de taille 3.
\\iv)~Les solutions de \eqref{p} de taille 4 sont de la forme $(\overline{-a},\overline{b},\overline{a},\overline{-b})$ avec $\overline{ab}=\overline{0}$ et $(\overline{a},\overline{b},\overline{a},\overline{b})$ avec $\overline{ab}=\overline{2}$. 

\end{proposition}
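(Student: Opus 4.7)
Le plan est de calculer explicitement la matrice $M_{n}(\overline{a_{1}},\ldots,\overline{a_{n}})$ pour chaque $n \in \{1,2,3,4\}$ et de résoudre directement le système imposé par la condition $M_{n} = \pm Id$ dans $\mathbb{Z}/N\mathbb{Z}$. La preuve est donc de nature purement computationnelle, basée sur le produit matriciel des $\begin{pmatrix} a_i & -1 \\ 1 & 0 \end{pmatrix}$.

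Les trois premiers points se traitent rapidement par examen des coefficients. Pour (i), le coefficient $(2,1)$ de $M_{1}(\overline{a_{1}})$ vaut toujours $\overline{1}$, ce qui interdit l'égalité avec $\pm Id$ puisque $N \geq 2$. Pour (ii), le coefficient $(2,2)$ de $M_{2}(\overline{a_{1}},\overline{a_{2}})$ vaut identiquement $\overline{-1}$, ce qui impose $M_{2}=-Id$ ; l'examen des autres coefficients fournit alors $\overline{a_{1}}=\overline{a_{2}}=\overline{0}$. Pour (iii), on développe $M_{3}$ : l'annulation des coefficients hors-diagonale fournit $\overline{a_{1}a_{2}}=\overline{a_{2}a_{3}}=\overline{1}$, puis l'examen des coefficients diagonaux (égaux à $\pm \overline{1}$) détermine le signe et force $\overline{a_{1}}=\overline{a_{2}}=\overline{a_{3}} \in \{\overline{1},\overline{-1}\}$.

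Le point (iv) est le plus technique et concentre l'essentiel du travail. On développe explicitement les quatre coefficients de $M_{4}(\overline{a_{1}},\overline{a_{2}},\overline{a_{3}},\overline{a_{4}})$, puis on discute selon $M_{4} \in \{Id,-Id\}$. L'examen des coefficients diagonaux donne $\overline{a_{1}a_{2}}=\overline{a_{2}a_{3}}=\overline{0}$ dans le premier cas et $\overline{a_{1}a_{2}}=\overline{a_{2}a_{3}}=\overline{2}$ dans le second. En reportant dans les équations d'annulation des coefficients hors-diagonale, on obtient $\overline{a_{3}}=\overline{-a_{1}}$ et $\overline{a_{4}}=\overline{-a_{2}}$ dans le cas $M_{4}=Id$, et $\overline{a_{3}}=\overline{a_{1}}$ et $\overline{a_{4}}=\overline{a_{2}}$ dans le cas $M_{4}=-Id$. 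On recolle les morceaux pour obtenir respectivement les familles $(\overline{-a},\overline{b},\overline{a},\overline{-b})$ avec $\overline{ab}=\overline{0}$ (à renommage $a \mapsto -a$ près) et $(\overline{a},\overline{b},\overline{a},\overline{b})$ avec $\overline{ab}=\overline{2}$.

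L'unique obstacle est purement calculatoire : il s'agit de mener proprement le développement de $M_{4}$ et de vérifier que les contraintes tirées des coefficients diagonaux et hors-diagonale sont mutuellement compatibles (et donc qu'aucune solution n'est oubliée ni surajoutée). Aucune difficulté conceptuelle n'apparaît ; on reste dans une résolution élémentaire d'un système polynomial à coefficients dans $\mathbb{Z}/N\mathbb{Z}$.
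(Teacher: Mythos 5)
Votre démonstration est correcte : le calcul direct des quatre coefficients de $M_{n}(\overline{a_{1}},\ldots,\overline{a_{n}})$ pour $n\leq 4$ (par exemple via les continuants $K_{i}$) et la résolution du système $M_{n}=\pm Id$ conduisent bien exactement aux familles annoncées, et les contraintes diagonales et hors-diagonale sont compatibles comme vous l'affirmez. Notez que le présent article ne démontre pas cette proposition mais la cite de \cite{M} (section 3.1) ; votre vérification computationnelle est l'approche standard attendue pour ce résultat.
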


On en déduit que la solution $\overline{0}$-monomiale minimale est de taille 2 et que les solutions $\pm\overline{1}$-monomiales minimales sont de taille 3. 
\\
\\De plus, pour tout $\overline{k} \in \mathbb{Z}/N\mathbb{Z}$, la solution $\overline{k}$-monomiale minimale et la solution $\overline{-k}$-monomiale minimale ont la même taille. Un simple calcul permet de montrer que la solution $\overline{3}$-monomiale minimale de $(E_{6})$ est de taille 6 et que la solution $\overline{3}$-monomiale minimale de $(E_{7})$ est de taille 4. Tout ceci nous permet de connaître précisément les tailles des solutions monomiales minimales pour $N \in \{2, 3, 4, 5, 6, 7\}$. 
\\
\\Pour le cas général on dispose du théorème ci-dessous qui donne une majoration de la taille :

\begin{theorem}[\cite{CG}, page 216]
\label{33}

Soit $N$ un entier naturel supérieur à 2. L'ordre des éléments de $SL_{2}(\mathbb{Z}/N\mathbb{Z})$ est inférieur à $3N$.

\end{theorem}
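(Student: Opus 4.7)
Le plan est de se ramener au cas des puissances de nombre premier grâce au théorème chinois des restes, puis de majorer l'ordre dans chaque facteur local. Si $N = \prod_i p_i^{e_i}$ est la décomposition en facteurs premiers, l'isomorphisme
\[
SL_2(\mathbb{Z}/N\mathbb{Z}) \cong \prod_i SL_2(\mathbb{Z}/p_i^{e_i}\mathbb{Z})
\]
réduit la question à majorer le ppcm des ordres des projections. Il suffit donc de contrôler l'ordre dans chaque $SL_2(\mathbb{Z}/p^e\mathbb{Z})$ puis de recombiner avec soin.

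Pour le cas premier ($e = 1$), j'utiliserais le théorème de Cayley-Hamilton : tout $A \in SL_2(\mathbb{F}_p)$ vérifie $A^2 = tA - I$ avec $t = \operatorname{tr}(A)$. Selon la nature des racines du polynôme caractéristique $X^2 - tX + 1$, trois situations apparaissent : si $t = \pm 2$, $A$ est unipotente (resp.\ $-$unipotente) et son ordre divise $2p$ ; si $t^2 - 4$ est un carré non nul, $A$ est diagonalisable dans un tore déployé et son ordre divise $p - 1$ ; sinon, $A$ se conjugue sur $\mathbb{F}_{p^2}$ à un élément d'un tore non déployé, d'ordre divisant $p + 1$. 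On obtient ainsi une borne $2p$ pour $p$ impair et $3$ pour $p = 2$.

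Pour le passage aux puissances $p^e$, on considère la suite exacte
\[
1 \to K_e \to SL_2(\mathbb{Z}/p^e\mathbb{Z}) \to SL_2(\mathbb{F}_p) \to 1,
\]
dont le noyau $K_e$ est un $p$-groupe d'exposant divisant $p^{e-1}$. Cette dernière propriété se vérifie par récurrence sur $e$ en observant que $(I + p^k X)^p \equiv I + p^{k+1} X \pmod{p^{k+2}}$. Il en résulte que l'ordre d'un élément de $SL_2(\mathbb{Z}/p^e\mathbb{Z})$ est au plus $2 p^e$ si $p$ est impair, et au plus $3 \cdot 2^{e-1}$ si $p = 2$.

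Enfin, on recombine par ppcm. En séparant la contribution éventuelle du facteur $p = 2$, dont la borne est $3 \cdot 2^{e_0 - 1}$ (et non $2 \cdot 2^{e_0}$), des autres facteurs bornés par $2 p_i^{e_i}$, un calcul direct montre que le ppcm reste majoré par $3N$, l'égalité étant atteinte par exemple lorsque $N = 2m$ avec $m$ impair non divisible par $3$. Le point le plus subtil sera précisément cette ultime étape arithmétique, ainsi que le traitement $2$-adique du noyau $K_e$ pour $p = 2$, où les identités binomiales utilisées pour la récurrence doivent être écrites avec soin à cause des valuations $2$-adiques des coefficients $\binom{p}{k}$.
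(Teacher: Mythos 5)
Premier point de contexte : l'article ne démontre pas ce théorème, il le cite de \cite{CG} (page 216) sans preuve ; seule la démonstration du théorème \ref{34} en adapte le cas $N$ premier. Votre analyse locale coïncide d'ailleurs avec cette adaptation : réduction par le théorème chinois, discussion selon le discriminant de $X^2-tX+1$ (ordre divisant $p-1$, $p+1$ ou $2p$ selon que le discriminant est un carré non nul, n'est pas un carré, ou est nul), puis passage de $p$ à $p^e$ via le noyau de congruence d'exposant $p^{e-1}$. Ces étapes sont correctes. Votre identité $(I+p^kX)^p\equiv I+p^{k+1}X \pmod{p^{k+2}}$ est fausse pour $p=2$ (prendre $k=1$ : il reste un terme $4X^2$), mais la version affaiblie $(I+p^kX)^p\equiv I \pmod{p^{k+1}}$, qui suffit pour borner l'exposant du noyau, est vraie pour tout $p$.

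Le point qui ne passe pas tel quel est la recombinaison finale, que vous présentez comme un \og calcul direct \fg{} à partir des majorations $m_i\leq 2p_i^{e_i}$. Une majoration \emph{en valeur} ne contrôle pas un ppcm : le ppcm de nombres de tailles $2p_i^{e_i}$ peut a priori valoir leur produit, soit $2^{r}N$, et l'ordre modulo $p_i^{e_i}$ ne divise pas toujours $2p_i^{e_i}$ (modulo $7$ il existe des éléments d'ordre $8$, qui ne divise pas $14$). L'ingrédient manquant est la structure de divisibilité : pour $p_i$ impair, l'ordre local divise l'un des trois entiers $(p_i-1)p_i^{e_i-1}$, $(p_i+1)p_i^{e_i-1}$, $2p_i^{e_i}$, qui sont tous \emph{pairs} et tous $\leq 2p_i^{e_i}$. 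Comme le ppcm de $r$ nombres pairs divise leur produit divisé par $2^{r-1}$, le ppcm des composantes aux premiers impairs est $\leq 2\prod_{p_i\,{\rm impair}}p_i^{e_i}$ ; en le combinant avec la composante $2$-adique, dont l'ordre est majoré par $3\cdot 2^{e_0-1}$, on obtient $\leq 3\cdot 2^{e_0}\prod_{p_i\,{\rm impair}} p_i^{e_i}=3N$ (et $\leq 2N$ si $N$ est impair). Une fois cette observation de parité explicitée, votre démonstration est complète.
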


Dans le cas où $N$ est premier, on peut avoir des informations plus précises sur la taille des solutions monomiales minimales. La preuve qui suit est une adaptation à notre situation de la preuve du cas $N$ premier du théorème précédent fournie dans \cite{CG}.

\begin{theorem}
\label{34}

Soient $N$ un nombre premier impair et $\overline{k} \in \mathbb{Z}/N\mathbb{Z}$. On a deux cas :
\begin{itemize}
\item si $\overline{k}=\pm \overline{2}$ alors la solution $\overline{k}$-monomiale minimale est de taille $N$;
\item si $\overline{k} \neq \pm \overline{2}$ alors la taille de la solution $\overline{k}$-monomiale minimale divise $\frac{N-1}{2}$ ou $\frac{N+1}{2}$.
\end{itemize}

\end{theorem}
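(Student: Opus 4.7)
L'observation de départ est que $M_n(\overline{k},\ldots,\overline{k}) = A_{\overline{k}}^n$ où
\[
A_{\overline{k}}=\begin{pmatrix} \overline{k} & -\overline{1} \\ \overline{1} & \overline{0} \end{pmatrix},
\]
de sorte que la taille cherchée est le plus petit $n\geq 1$ tel que $A_{\overline{k}}^n=\pm Id$ dans $SL_2(\mathbb{F}_N)$, avec $\mathbb{F}_N=\mathbb{Z}/N\mathbb{Z}$. Le polynôme caractéristique $X^2-\overline{k}X+\overline{1}$ a pour discriminant $\overline{k}^2-\overline{4}$ ; la preuve se sépare naturellement en trois cas selon que ce discriminant est nul, un carré non nul, ou un non-carré dans $\mathbb{F}_N$.

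Pour $\overline{k}=\pm\overline{2}$, $A_{\overline{k}}$ admet $\pm\overline{1}$ comme unique valeur propre mais diffère de $\pm Id$, donc n'est pas diagonalisable. En posant $E=A_{\overline{k}}\mp Id\neq 0$, Cayley--Hamilton fournit $E^2=0$, d'où $A_{\overline{k}}^n=(\pm 1)^n(Id+nE)$. L'équation $A_{\overline{k}}^n=\pm Id$ se ramène alors à $nE=0$, soit $\overline{n}=\overline{0}$ dans $\mathbb{F}_N$ ; le plus petit entier $n\geq 1$ qui convient est $N$, ce qui règle le premier point.

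Pour $\overline{k}\neq\pm\overline{2}$, le discriminant est non nul et $A_{\overline{k}}$ se diagonalise dans $\overline{\mathbb{F}_N}$ avec deux valeurs propres distinctes $\lambda,\lambda^{-1}$ (leur produit étant $\det A_{\overline{k}}=\overline{1}$). Les puissances $\lambda^n$ et $\lambda^{-n}$ prenant nécessairement la même valeur dans $\{1,-1\}$, on a $A_{\overline{k}}^n=\pm Id$ si et seulement si $\lambda^n=\pm 1$. Notant $d$ l'ordre multiplicatif de $\lambda$, le plus petit entier $n\geq 1$ tel que $\lambda^n=\pm 1$ vaut $d/2$ si $d$ est pair (car alors $\lambda^{d/2}=-1$) et $d$ si $d$ est impair (cas où $-1\notin\langle\lambda\rangle$). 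Si $\overline{k}^2-\overline{4}$ est un carré non nul, alors $\lambda\in\mathbb{F}_N^*$ et $d\mid N-1$ ; puisque $N-1$ est pair, une vérification directe donne $d/2\mid (N-1)/2$ et $d\mid (N-1)/2$. Sinon $\lambda\in\mathbb{F}_{N^2}\setminus\mathbb{F}_N$, et son conjugué de Galois $\lambda^N$ est nécessairement l'autre racine $\lambda^{-1}$, d'où $\lambda^{N+1}=1$ et $d\mid N+1$ ; le même argument de parité donne alors la divisibilité par $(N+1)/2$.

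La seule vraie subtilité consiste à bien distinguer \emph{ordre de $\lambda$} et \emph{plus petit $n$ tel que $\lambda^n=\pm 1$} : c'est précisément cette distinction qui fait gagner un facteur $2$ et qui explique pourquoi l'on obtient $(N\pm 1)/2$ plutôt que $N\pm 1$. Le reste est du calcul standard en algèbre linéaire sur un corps fini, et la démarche reprend la stratégie de \cite{CG} pour majorer l'ordre d'un élément de $SL_2(\mathbb{F}_N)$.
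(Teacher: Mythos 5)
Your proof is correct and takes essentially the same route as the paper: both analyse $M_1(\overline{k})$ via its characteristic polynomial $X^2-\overline{k}X+\overline{1}$ and split according to whether the discriminant $\overline{k}^2-\overline{4}$ is zero, a nonzero square, or a non-square in $\mathbb{Z}/N\mathbb{Z}$, using in the last case the Frobenius conjugate $\lambda^N=\lambda^{-1}$ to get $\lambda^{N+1}=\overline{1}$. The only cosmetic differences are that the paper handles $\overline{k}=\pm\overline{2}$ by the explicit induction $M_n(2,\ldots,2)=\left(\begin{smallmatrix}n+1 & -n\\ n & -n+1\end{smallmatrix}\right)$ rather than by Cayley--Hamilton, and in the other cases shows directly that $\overline{a}^{(N\pm1)/2}=\pm\overline{1}$ (the two eigenvalues giving the same sign because their product is $\overline{1}$) instead of reasoning on the multiplicative order $d$ of the eigenvalue; your order-theoretic phrasing is a harmless refinement.
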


\begin{proof}

On montre, par récurrence, que pour tout $n$ dans $\mathbb{N}^{*}$ on a \[M_{n}(2,\ldots,2)=\begin{pmatrix}
   n+1   & -n \\
   n & -n+1
\end{pmatrix}.\] Donc, $M_{N}(\overline{2},\ldots,\overline{2})=\begin{pmatrix}
   \overline{N+1}   & \overline{-N} \\
   \overline{N} & \overline{-N+1}
\end{pmatrix}=Id$ ce qui implique que la taille de la solution $\overline{2}$-monomiale minimale de \eqref{p} divise $N$. De plus, \eqref{p} n'a pas de solution de taille 1 et $N$ est premier. Donc, la taille de la solution $\overline{2}$-monomiale minimale de \eqref{p} est égale à $N$. Celle de la solution $\overline{-2}$-monomiale minimale de \eqref{p} est par conséquent aussi égale à $N$. 
\\
\\On suppose maintenant $\overline{k} \neq \pm \overline{2}$. Le polynôme caractéristique de $M_{1}(\overline{k})$ est $\chi(X)=X^{2}-\overline{k}X+\overline{1}$. Ce polynôme a pour discriminant $\Delta=\overline{k}^{2}-\overline{4}=(\overline{k}-\overline{2})(\overline{k}+\overline{2}) \neq \overline{0}$ (car $\mathbb{Z}/N\mathbb{Z}$ est intègre). On a donc deux cas :
\begin{itemize}
\item $\Delta$ est un carré dans $\mathbb{Z}/N\mathbb{Z}$. Dans ce cas, $M_{1}(\overline{k})$ a deux valeurs propres distinctes et donc est diagonalisable dans $\mathbb{Z}/N\mathbb{Z}$. Notons $\overline{a}$ et $\overline{b}$ ses valeurs propres. Il existe $P \in GL_{2}(\mathbb{Z}/N\mathbb{Z})$ tel que \[M_{1}(\overline{k})=P\begin{pmatrix}
   \overline{a} & \overline{0} \\[2pt]
    \overline{0}    & \overline{b} 
   \end{pmatrix}P^{-1}.\] De plus, $\mathbb{Z}/N\mathbb{Z}-\{\overline{0}\}$ est un groupe de cardinal $N-1$. Ainsi, $\overline{a}^{N-1}=\overline{1}$ ($\overline{a} \neq \overline{0}$ puisque $\overline{ab}=\overline{1}$). Donc, $(\overline{a}^{\frac{N-1}{2}}-\overline{1})(\overline{a}^{\frac{N-1}{2}}+\overline{1})=\overline{0}$. On en déduit que $\overline{a}^{\frac{N-1}{2}}=\pm \overline{1}$ (puisque $\mathbb{Z}/N\mathbb{Z}$ est intègre). De même, $\overline{b}^{\frac{N-1}{2}}=\pm \overline{1}$. Or, $\overline{ab}=\overline{1}$ donc $\overline{a}^{\frac{N-1}{2}}=\overline{b}^{\frac{N-1}{2}}=\pm \overline{1}$. Il en découle \[M_{1}(\overline{k})^{\frac{N-1}{2}}=P\begin{pmatrix}
   \overline{a}^{\frac{N-1}{2}} & \overline{0} \\[2pt]
    \overline{0}    & \overline{b}^{\frac{N-1}{2}} 
   \end{pmatrix}P^{-1}=\pm Id.\]
Par conséquent, l'ordre de $M_{1}(\overline{k})$ dans $PSL_{2}(\mathbb{Z}/N\mathbb{Z})$, c'est-à-dire la taille de la solution $\overline{k}$-monomiale minimale de \eqref{p}, divise $\frac{N-1}{2}$.
\\
\item $\Delta$ n'est pas un carré dans $\mathbb{Z}/N\mathbb{Z}$. Soit $K$ un corps de décomposition de $\chi$ (voir \cite{G} Théorème V.18). $\chi$ a deux racines distinctes dans $K$. Notons les $x$ et $y$. On a 
\begin{eqnarray*}
(X-x^{N})(X-y^{N}) &=& X^{2}-(x^{N}+y^{N})X+x^{N}y^{N} \\
                   &=& X^{2}-(x+y)^{N}X+x^{N}y^{N}~({\rm morphisme~de~Frobenius}) \\
									 &=& X^{2}-(x+y)^{N}X+(xy)^{N}~({\rm commutativit\acute{e}~de}~K) \\
									 &=& X^{2}-\overline{k}^{N}X+\overline{1}~(x~{\rm et}~y~{\rm racines~de}~\chi) \\
									 &=& X^{2}-\overline{k}X+\overline{1}. \\
\end{eqnarray*}
Donc, $x^{N}$ et $y^{N}$ sont des racines de $\chi$. De plus, $x^{N} \neq x$. En effet, supposons par l'absurde que $x^{N}=x$. Dans ce cas, $x^{N-1}=\overline{1}$ ($x \neq \overline{0}$ car $xy=\overline{1}$). Or, le polynôme $Q(X)=X^{N-1}-\overline{1}$ a au plus $N-1$ racines dans $K$ et les éléments non nuls de $\mathbb{Z}/N\mathbb{Z}$ sont des racines. Donc, les seules racines de $Q$ sont les éléments non nuls de $\mathbb{Z}/N\mathbb{Z}$. On a donc, $x \in \mathbb{Z}/N\mathbb{Z}$. Ceci est absurde puisque $\chi$ n'a pas de racine dans ce corps.
\\
\\Ainsi, $x^{N}=y$ et donc $x^{N+1}=xy=\overline{1}$. En procédant comme dans le cas précédent, on obtient $\overline{x}^{\frac{N+1}{2}}=\overline{y}^{\frac{N+1}{2}}=\pm \overline{1}$ ce qui implique que la taille de la solution $\overline{k}$-monomiale minimale de \eqref{p} divise $\frac{N+1}{2}.$
\end{itemize}

\end{proof}

\noindent On donne en annexe (voir annexe \ref{A}) les valeurs des tailles des solutions monomiales minimales pour les nombres premiers compris entre 11 et 47.

\begin{remark}

{\rm Il existe des cas où la taille des solutions monomiales minimales est égale à $\frac{N-1}{2}$ ou $\frac{N+1}{2}$ (voir annexe \ref{A}).}

\end{remark}

On peut également obtenir la taille précise de certaines solutions monomiales minimales lorsque $N$ et $\overline{k}$ vérifient certaines propriétés (voir sous-partie suivante).

\subsection{Solutions $\overline{\frac{N}{l}}$-monomiales minimales}

\subsubsection{Preuve du théorème \ref{24}}

On suppose que $N$ est un entier pair supérieur à 4.
\\
\\i)~Si $4 \mid N$, alors $\overline{\frac{N}{2}}\overline{\frac{N}{2}}=\overline{\frac{N^{2}}{4}}=\overline{N\frac{N}{4}}=\overline{0}$ et donc $(\overline{-\frac{N}{2}},\overline{\frac{N}{2}},\overline{\frac{N}{2}},\overline{-\frac{N}{2}})=(\overline{\frac{N}{2}},\overline{\frac{N}{2}},\overline{\frac{N}{2}},\overline{\frac{N}{2}})$ est solution de \eqref{p} (voir proposition \ref{32}). La taille de la solution $\overline{\frac{N}{2}}$-monomiale minimale divise 4 (puisque c'est l'ordre de $M_{1}(\overline{\frac{N}{2}})$ dans le groupe $PSL_{2}(\mathbb{Z}/N\mathbb{Z})$). \eqref{p} n'a pas de solution de taille 1. Comme $\overline{\frac{N}{2}} \neq \overline{0}$, $(\overline{\frac{N}{2}},\overline{\frac{N}{2}})$ n'est pas solution et donc la solution $\overline{\frac{N}{2}}$-monomiale minimale est de taille 4. Comme, $\overline{\frac{N}{2}} \neq \overline{\pm 1}$, la solution $\overline{\frac{N}{2}}$-monomiale minimale est irréductible. En effet, une solution réductible de taille 4 est la somme de deux solutions de taille 3 et donc contient nécessairement $\pm \overline{1}$.
\\
\\ii)~Si $4 \nmid N$, on note $K=\frac{N}{2}$. En particulier, on a $K$ impair et $\overline{2K}=\overline{0}$. On a

\[M_{6}(\overline{K},\overline{K},\overline{K},\overline{K},\overline{K},\overline{K}) = \begin{pmatrix}
   \overline{K^{6}-5K^{4}+6K^{2}-1}   & \overline{-K^{5}+4K^{3}-3K} \\
   \overline{K^{5}-4K^{3}+3K} & \overline{-K^{4}+3K^{2}-1}
\end{pmatrix}.\]
Or, 
\begin{eqnarray*} 
\overline{K^{6}-5K^{4}+6K^{2}} &=& \overline{K^{6}-K^{4}-4K^{4}+3\times 2K^{2}} \\
                               &=& \overline{K^{6}-K^{4}} \\
															 &=& \overline{K^{4}(K^{2}-1)}. \\
\end{eqnarray*}

De plus, $K^{2}$ est impair (produit de deux entiers impairs) donc $(K^{2}-1)$ est pair. Ainsi, il existe un entier $j$ tel que $(K^{2}-1)=2j$. Donc, \[\overline{K^{6}-5K^{4}+6K^{2}}=\overline{2jK^{4}}=\overline{jK^{3}(2K)}=\overline{0}.\]

\noindent De même, on a
\begin{eqnarray*} 
\overline{-K^{5}+4K^{3}-3K} &=& \overline{-K^{5}+2K(2K^{2})-K-2K} \\
                            &=& \overline{-K^{5}-K} \\
														&=& \overline{-K(1+K^{4})}. \\
\end{eqnarray*}
Or, $K^{4}$ est impair (produit de quatre entiers impairs) donc $1+K^{4}$ est pair. Il existe un entier $j'$ tel que $1+K^{4}=2j'$. Ainsi, \[\overline{-K^{5}+4K^{3}-3K}=\overline{-K(1+K^{4})}=\overline{-j'N}=\overline{0}.\]
\\On procède de façon analogue pour $\overline{-K^{4}+3K^{2}}$. On a $\overline{-K^{4}+3K^{2}}=\overline{K^{2}(1-K^{2})}$. Or, $K^{2}$ est impair (produit de deux entiers impairs) donc $(1-K^{2})$ est pair. Il existe un entier $j''$ tel que $(1-K^{2})=2j''$. Donc, \[\overline{-K^{4}+3K^{2}}=\overline{K^{2}(1-K^{2})}=\overline{2j''K^{2}}=\overline{Nj''K}=\overline{0}.\]
\\Ainsi, $M_{6}(\overline{\frac{N}{2}},\overline{\frac{N}{2}},\overline{\frac{N}{2}},\overline{\frac{N}{2}},\overline{\frac{N}{2}},\overline{\frac{N}{2}})=-Id$. Il en découle que la taille de la solution $\overline{\frac{N}{2}}$-monomiale minimale divise 6 (puisque c'est l'ordre de $M_{1}(\overline{\frac{N}{2}})$ dans le groupe $PSL_{2}(\mathbb{Z}/N\mathbb{Z})$) c'est-à-dire que celle-ci est égale à 1, 2, 3 ou 6.
\\
\\ \eqref{p} n'a pas de solution de taille 1. $\overline{\frac{N}{2}} \neq \overline{0}$, donc, $(\overline{\frac{N}{2}},\overline{\frac{N}{2}})$ n'est pas solution. $\overline{\frac{N}{2}} \neq \overline{\pm 1}$, donc, $(\overline{\frac{N}{2}},\overline{\frac{N}{2}},\overline{\frac{N}{2}})$ n'est pas solution. On en déduit que la solution $\overline{\frac{N}{2}}$-monomiale minimale de \eqref{p} est de taille 6. 
\\
\\Si celle-ci est réductible alors elle est la somme de deux solutions de taille 4 (puisqu'elle ne contient pas $\pm \overline{1}$). Dans ce cas, \eqref{p} a une solution de la forme $(\overline{a},\overline{\frac{N}{2}},\overline{\frac{N}{2}},\overline{a})$ avec $\overline{a} \neq \overline{\frac{N}{2}}$ (sinon la solution minimale serait de taille 4). Donc, on a $\overline{\frac{N^{2}}{4}}=\overline{0}$ et $\overline{a}=\overline{-\frac{N}{2}}$. Ainsi, Il existe un entier $l$ tel que $K^{2}=2lK$. Donc, $K^{2}$ est pair ce qui implique $K$ pair. Ce qui est absurde. 
\\
\\Donc, la solution $\overline{\frac{N}{2}}$-monomiale minimale est irréductible de taille 6.

\qed

\begin{remark}

{\rm Si $N=2$, alors la solution $\overline{\frac{N}{2}}$-monomiale minimale est la solution $\overline{1}$-monomiale minimale qui est irréductible de taille 3.}

\end{remark}

\subsubsection{Généralisations partielles}

On cherche à généraliser le théorème \ref{24} pour des diviseurs de $N$ différents de 2. On commence par le résultat suivant :  

\begin{proposition}
\label{35}

Si $l^{2} \mid N$ avec $l \geq 2$ alors $(\overline{\frac{N}{l}},\ldots,\overline{\frac{N}{l}}) \in (\mathbb{Z}/N\mathbb{Z})^{2l}$ est solution de \eqref{p}.

\end{proposition}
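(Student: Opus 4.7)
The plan is to compute $M_{2l}(\overline{k},\ldots,\overline{k})$ for $k=N/l$ explicitly by exploiting the crucial identity $k^{2}=N^{2}/l^{2}=N\cdot(N/l^{2})\equiv 0 \pmod{N}$, which holds precisely because $l^{2}\mid N$. A second relation will play the key role at the end: $lk=N\equiv 0\pmod N$.

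First I would introduce the continuants associated to constant arguments, namely the sequence $(K_{n})_{n\geq -1}$ of integer polynomials in $k$ defined by $K_{-1}=0$, $K_{0}=1$, and $K_{n}=kK_{n-1}-K_{n-2}$. A straightforward induction on $n$, using the recursive definition of $M_{n}$, shows that
\[M_{n}(k,\ldots,k)=\begin{pmatrix} K_{n} & -K_{n-1} \\ K_{n-1} & -K_{n-2} \end{pmatrix}.\]
Next I would compute these continuants modulo $k^{2}$ (equivalently, in $\mathbb{Z}/N\mathbb{Z}$, using $\overline{k}^{2}=\overline{0}$). By induction on $m$ I would establish the two formulas
\[K_{2m}\equiv (-1)^{m}\pmod{k^{2}},\qquad K_{2m+1}\equiv (-1)^{m}(m+1)\,k\pmod{k^{2}}.\]
The base case ($m=0$) is immediate, and the inductive step is a two-line calculation using $k\cdot k=0$ in the first line and combining terms in the second.

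Applying these with $n=2l$ gives $K_{2l}\equiv (-1)^{l}$, $K_{2l-2}\equiv (-1)^{l-1}$, and $K_{2l-1}\equiv (-1)^{l-1}lk \pmod{k^{2}}$. Reducing modulo $N$, the first two expressions are just $\overline{\pm 1}$, and the crucial point is that $\overline{lk}=\overline{N}=\overline{0}$, so the off-diagonal continuant $K_{2l-1}$ vanishes in $\mathbb{Z}/N\mathbb{Z}$. Therefore
\[M_{2l}(\overline{k},\ldots,\overline{k})=\begin{pmatrix} \overline{(-1)^{l}} & \overline{0} \\ \overline{0} & \overline{(-1)^{l}} \end{pmatrix}=(-1)^{l}\,Id=\pm Id,\]
which is precisely the required equation $(E_{N})$. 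No real obstacle arises here; the only point one must be careful with is the bookkeeping of signs in the induction on $m$, and the observation that even though the individual continuants $K_{2l-1}$ are not zero in $\mathbb{Z}$, the product $lk$ equals $N$ exactly, which is why the hypothesis $l^{2}\mid N$ (yielding $\overline{k}^{2}=\overline{0}$) combined with $\overline{lk}=\overline{0}$ is tight for this argument.
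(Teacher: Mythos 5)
Your argument is correct, and it takes a genuinely different computational route from the paper. The paper groups the $2l$ factors into $l$ copies of $M_{2}(\overline{N/l},\overline{N/l})$, observes that this square equals $-Id+\frac{N}{l}S$ because $\overline{N^{2}/l^{2}}=\overline{0}$, and then expands $\bigl(-Id+\frac{N}{l}S\bigr)^{l}$ by the binomial theorem, killing the $k=1$ term because $\binom{l}{1}\frac{N}{l}=N$ and the $k\geq 2$ terms because $\frac{N^{k}}{l^{k}}$ is a multiple of $N$. You instead run the constant-argument continuant recursion $K_{n}=kK_{n-1}-K_{n-2}$ modulo $k^{2}$ (which is legitimate for conclusions in $\mathbb{Z}/N\mathbb{Z}$ since $N\mid k^{2}=N\cdot\frac{N}{l^{2}}$), obtaining $K_{2m}\equiv(-1)^{m}$ and $K_{2m+1}\equiv(-1)^{m}(m+1)k$, and then kill the off-diagonal entry via $\overline{lk}=\overline{N}=\overline{0}$; I checked the induction and the final identification $M_{2l}=(-1)^{l}Id$ (note $-K_{2l-2}\equiv(-1)^{l}$), and everything is in order, matching the paper's continuant formula for $M_{n}$. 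The two proofs rest on exactly the same arithmetic inputs ($\overline{k}^{2}=\overline{0}$ and $\overline{lk}=\overline{0}$), but yours has the small advantage of producing closed forms for all the intermediate continuants modulo $k^{2}$ (which could be reused, e.g.\ in the spirit of Proposition \ref{37}), whereas the paper's binomial computation is shorter to write down and generalizes more directly to powers of a fixed $2\times 2$ matrix.
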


\begin{proof}

On a 
\begin{eqnarray*}
M_{2l}(\overline{\frac{N}{l}},\ldots,\overline{\frac{N}{l}}) &=& (\begin{pmatrix}
   \overline{\frac{N}{l}}   & \overline{-1} \\
   \overline{1} & \overline{0}
\end{pmatrix}\begin{pmatrix}
   \overline{\frac{N}{l}}   & \overline{-1} \\
   \overline{1} & \overline{0}
\end{pmatrix})^{l} \\
                                         &=& \begin{pmatrix}
   \overline{\frac{N^{2}}{l^{2}}-1}   & \overline{-\frac{N}{l}} \\
   \overline{\frac{N}{l}} & \overline{-1}
\end{pmatrix}^{l} \\
                                         &=& \begin{pmatrix}
   \overline{N\frac{N}{l^{2}}-1}   & \overline{-\frac{N}{l}} \\
   \overline{\frac{N}{l}} & \overline{-1}
\end{pmatrix}^{l}~{\rm car}~l^{2} \mid N \\
                                         &=& \begin{pmatrix}
   \overline{-1}   & \overline{-\frac{N}{l}} \\
   \overline{\frac{N}{l}} & \overline{-1}
\end{pmatrix}^{l} \\
                                         &=& \overline{(-Id+\frac{N}{l}S)^{l}} \\
																				 &=& \overline{ \sum_{k=0}^{l} \binom{l}{k} (-1)^{l-k}(\frac{N}{l}S)^{k}}~({\rm bin\hat{o}me~de~Newton}) \\
																				 &=& \overline{ (-1)^{l}\binom{l}{0}Id+(-1)^{l-1}\binom{l}{1}\frac{N}{l}S+\sum_{k=2}^{l} \binom{l}{k} (-1)^{l-k}\frac{N^{k}}{l^{k}}S^{k}} \\
																				 &=& \overline{ (-1)^{l}Id+(-1)^{l-1}NS+\sum_{k=2}^{l} \binom{l}{k} (-1)^{l-k}N\frac{N}{l^{2}}\frac{N^{k-2}}{l^{k-2}}S^{k}} \\
																				 &=& \overline{(-1)}^{l}Id. \\
\end{eqnarray*}

\noindent L'avant dernière égalité est valide car $\frac{N}{l^{2}}$ et $\frac{N^{k-2}}{l^{k-2}}$ sont des entiers puisque $l^{2} \mid N$ et $l^{k-2} \mid N^{k-2}$.

\end{proof}

\begin{remark}

{\rm Si $l=1$ alors  $(\overline{\frac{N}{l}},\ldots,\overline{\frac{N}{l}})=(\overline{0},\overline{0}) \in (\mathbb{Z}/N\mathbb{Z})^{2}$ est aussi solution de \eqref{p}.}

\end{remark} 

\noindent Le résultat ci-dessous résout la question de l'irréductibilité de ces solutions.

\begin{proposition}
\label{36}

Soit $l^{2} \mid N$ avec $l \geq 2$. $(\overline{\frac{N}{l}},\ldots,\overline{\frac{N}{l}}) \in (\mathbb{Z}/N\mathbb{Z})^{2l}$ est une solution irréductible de \eqref{p} si et seulement si $l=2$.

\end{proposition}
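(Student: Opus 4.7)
The plan is to handle the two implications separately. The direction ``$l=2 \Rightarrow$ irreducible'' is immediate from Theorem \ref{24}: if $l=2$ then the hypothesis $l^{2}\mid N$ forces $4\mid N$, and the tuple in question is precisely $(\overline{N/2},\overline{N/2},\overline{N/2},\overline{N/2})$, i.e. the size-$4$ $\overline{N/2}$-monomial minimal solution, whose irreducibility is part of the statement of Theorem \ref{24}.

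For the converse I would assume $l\geq 3$ with $l^{2}\mid N$ and produce an explicit reduction of $(\overline{N/l},\ldots,\overline{N/l})\in(\mathbb{Z}/N\mathbb{Z})^{2l}$. The candidate decomposition to try is
\[
(\underbrace{\overline{N/l},\ldots,\overline{N/l}}_{2l})
=(\overline{2N/l},\underbrace{\overline{N/l},\ldots,\overline{N/l}}_{2l-4},\overline{2N/l})
\oplus (\overline{-N/l},\overline{N/l},\overline{N/l},\overline{-N/l}).
\]
The left factor has size $m=2l-2\geq 4$ and the right factor has size $s=4$, so both sizes are $\geq 3$ as required by Definition \ref{222}. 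A direct unwrapping of Definition \ref{21} verifies the equality: the two ``junction'' coordinates are $\overline{2N/l}+\overline{-N/l}=\overline{N/l}$, every other coordinate is already $\overline{N/l}$, and the total size is $m+s-2=2l$.

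It remains to show that each summand is itself a solution of \eqref{p}. The right-hand factor has the shape $(\overline{-a},\overline{b},\overline{a},\overline{-b})$ with $a=b=N/l$, and the condition $\overline{ab}=\overline{0}$ reduces to $\overline{(N/l)^{2}}=\overline{N\cdot(N/l^{2})}=\overline{0}$, which holds because $l^{2}\mid N$. Proposition \ref{32}(iv) then certifies this factor as a solution. Since the full constant $2l$-tuple is also a solution by Proposition \ref{35}, and since an $\oplus$-sum involving a solution is a solution if and only if the other factor is (the general principle recalled just after Definition \ref{21}), the left-hand factor is automatically a solution as well. This yields a genuine reduction and shows that $(\overline{N/l},\ldots,\overline{N/l})$ is reducible whenever $l\geq 3$.

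The only real insight is the choice of the decomposition; once it is written down, every verification is routine bookkeeping with $\oplus$ combined with appeals to the already-established Theorem \ref{24}, Proposition \ref{32}(iv), and Proposition \ref{35}. There is no substantial obstacle beyond finding the right shape $\bigl(\overline{2N/l},\ldots,\overline{2N/l}\bigr)\oplus\bigl(\overline{-N/l},\overline{N/l},\overline{N/l},\overline{-N/l}\bigr)$.
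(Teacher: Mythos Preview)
Your proof is correct and follows exactly the same route as the paper: the case $l=2$ is reduced to Theorem~\ref{24}, and for $l\geq 3$ you use the very same decomposition
\[
(\overline{N/l},\ldots,\overline{N/l})=(\overline{2N/l},\overline{N/l},\ldots,\overline{N/l},\overline{2N/l})\oplus(\overline{-N/l},\overline{N/l},\overline{N/l},\overline{-N/l}),
\]
checking via Proposition~\ref{32}(iv) that the right factor is a solution and that both sizes are $\geq 3$. One small remark: by Definition~\ref{222} only the right-hand factor needs to be a solution of \eqref{p}, so your additional argument that the left-hand factor is also a solution is correct but unnecessary.
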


\begin{proof}

Si $l=2$ alors la solution est irréductible (voir théorème \ref{24}). 
\\
\\Si $l \geq 3$ alors la solution n'est pas irréductible car $(\overline{-\frac{N}{l}},\overline{\frac{N}{l}},\overline{\frac{N}{l}},\overline{-\frac{N}{l}})$ est solution de \eqref{p} puisque $\overline{\frac{N^{2}}{l^{2}}}=\overline{N\frac{N}{l^{2}}}=\overline{0}$ (voir proposition \ref{32}) et \[(\overline{\frac{N}{l}},\ldots,\overline{\frac{N}{l}})=(\overline{2\frac{N}{l}},\overline{\frac{N}{l}},\ldots,\overline{\frac{N}{l}},\overline{2\frac{N}{l}}) \oplus (\overline{-\frac{N}{l}},\overline{\frac{N}{l}},\overline{\frac{N}{l}},\overline{-\frac{N}{l}}).\]  $(\overline{2\frac{N}{l}},\overline{\frac{N}{l}},\ldots,\overline{\frac{N}{l}},\overline{2\frac{N}{l}})$ est de taille $2l-2 \geq 4 >3$.

\end{proof}

\begin{remark}

{\rm Ces propositions généralisent les propriétés 3.10 et 3.11 de \cite{M}.}

\end{remark}

Avant de continuer, on a besoin des résultats suivants qui permettent d'obtenir une expression de $M_{n}(a_{1},\ldots,a_{n})$ :
\\
\\Soit $x$ un réel, on note $E[x]$ la partie entière de $x$. On pose $K_{-1}=0$, $K_{0}=1$ et on note pour $i \geq 1$ \[K_i(a_{1},\ldots,a_{i})=
\left|
\begin{array}{cccccc}
a_1&1&&&\\[4pt]
1&a_{2}&1&&\\[4pt]
&\ddots&\ddots&\!\!\ddots&\\[4pt]
&&1&a_{i-1}&\!\!\!\!\!1\\[4pt]
&&&\!\!\!\!\!1&\!\!\!\!a_{i}
\end{array}
\right|.\] $K_{i}(a_{1},\ldots,a_{i})$ est le continuant de $a_{1},\ldots,a_{i}$. On dispose de l'égalité suivante (voir \cite{CO,MO}) : \[M_{n}(a_{1},\ldots,a_{n})=\begin{pmatrix}
    K_{n}(a_{1},\ldots,a_{n}) & -K_{n-1}(a_{2},\ldots,a_{n}) \\
    K_{n-1}(a_{1},\ldots,a_{n-1})  & -K_{n-2}(a_{2},\ldots,a_{n-1}) 
   \end{pmatrix}.\]
De plus, on dispose de l'expression classique ci-dessous :

\begin{lemma}

Soit $n \geq 0$, $K_{n}(x,\ldots,x)=\sum_{k=0}^{E[\frac{n}{2}]} (-1)^{k}\binom{n-k}{k}x^{n-2k}$.

\end{lemma}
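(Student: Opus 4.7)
The plan is to prove the formula by strong induction on $n$, using the classical three-term recurrence satisfied by continuants. Recall from the definition of $K_n$ (developing the tridiagonal determinant along the last column) that for every $n \geq 2$,
\[K_n(x,\ldots,x) = x\, K_{n-1}(x,\ldots,x) - K_{n-2}(x,\ldots,x),\]
together with the conventions $K_{-1}=0$, $K_0=1$, and $K_1(x)=x$.

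First I would check the base cases: for $n=0$ the right-hand side reduces to $(-1)^0 \binom{0}{0} x^0 = 1 = K_0$, and for $n=1$ it reduces to $(-1)^0 \binom{1}{0} x^1 = x = K_1(x)$, which matches. Then I would assume the formula holds for all indices strictly less than $n$ (with $n\geq 2$) and compute, using the recurrence,
\[K_n(x,\ldots,x) = x \sum_{k=0}^{\lfloor (n-1)/2 \rfloor} (-1)^k \binom{n-1-k}{k} x^{n-1-2k} - \sum_{j=0}^{\lfloor (n-2)/2 \rfloor} (-1)^j \binom{n-2-j}{j} x^{n-2-2j}.\]
After reindexing the second sum by $k=j+1$, both sums become sums of multiples of $x^{n-2k}$, and I would combine them into a single sum over $k$ running from $0$ to $\lfloor n/2 \rfloor$ (the edge cases at $k=0$ and at the maximal index will have to be checked separately depending on the parity of $n$).

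The coefficient of $(-1)^k x^{n-2k}$ in the combined sum is then $\binom{n-1-k}{k} + \binom{n-1-k}{k-1}$, which by Pascal's rule equals $\binom{n-k}{k}$, exactly the desired coefficient. The main (minor) obstacle is bookkeeping at the boundary of the summation range: when $n$ is even one must verify that the top index $k=n/2$ arises only from the reindexed second sum (since $\lfloor (n-1)/2 \rfloor = n/2 - 1$), and when $n$ is odd one must check that the extra term at $k=0$ in the first sum matches the convention $\binom{n-1}{-1}=0$ used implicitly in Pascal's rule. Once these parity checks are carried out, the induction closes and the formula is established.
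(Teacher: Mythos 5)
Your proof is correct and follows essentially the same route as the paper: an induction on $n$ based on the three-term recurrence $K_{n}(x,\ldots,x)=xK_{n-1}(x,\ldots,x)-K_{n-2}(x,\ldots,x)$, a reindexing of the second sum, and Pascal's rule to merge the binomial coefficients, with the boundary terms handled according to the parity of $n$. The only cosmetic differences are that you expand the determinant along the last column rather than the first and phrase the induction as strong induction rather than a two-step one; these change nothing of substance.
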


\begin{proof}

Cela se prouve par récurrence sur $n$. En effet, la  formule est vraie pour $n=0$ et pour $n=1$. Supposons qu'il existe un entier positif $n$ tel que la formule est vraie pour $n$ et $n-1$. On suppose $n$ pair. En développant le déterminant définissant $K_{n+1}(x,\ldots,x)$ suivant la première colonne, on obtient :
\begin{eqnarray*}
K_{n+1}(x,\ldots,x) &=& xK_{n}(x,\ldots,x)-K_{n-1}(x,\ldots,x) \\
                    &=& \sum_{k=0}^{E[\frac{n}{2}]} (-1)^{k}\binom{n-k}{k}x^{n+1-2k}-\sum_{k=0}^{E[\frac{n-1}{2}]} (-1)^{k}\binom{n-1-k}{k}x^{n-1-2k} \\
										&=&  \sum_{k=0}^{\frac{n}{2}} (-1)^{k}\binom{n-k}{k}x^{n+1-2k}-\sum_{k=0}^{\frac{n}{2}-1} (-1)^{k}\binom{n-1-k}{k}x^{n-1-2k}~({\rm car}~n~{\rm est~pair}) \\
										&=&  \sum_{k=0}^{\frac{n}{2}} (-1)^{k}\binom{n-k}{k}x^{n+1-2k}-\sum_{l=1}^{\frac{n}{2}} (-1)^{l-1}\binom{n-l}{l-1}x^{n+1-2l} \\ 
										&=&  \sum_{k=0}^{\frac{n}{2}} (-1)^{k}\binom{n-k}{k}x^{n+1-2k}+\sum_{l=1}^{\frac{n}{2}} (-1)^{l}\binom{n-l}{l-1}x^{n+1-2l} \\
										&=&  x^{n+1}+\sum_{k=1}^{\frac{n}{2}} (-1)^{k}(\binom{n-k}{k}+\binom{n-k}{k-1})x^{n+1-2k} \\
										&=&  x^{n+1}+\sum_{k=1}^{\frac{n}{2}} (-1)^{k}\binom{n+1-k}{k}x^{n+1-2k}~({\rm triangle~de~Pascal}) \\
										&=&  \sum_{k=0}^{\frac{n}{2}} (-1)^{k}\binom{n+1-k}{k}x^{n+1-2k}. \\
\end{eqnarray*}

\noindent On procède de façon analogue si $n$ est impair. Cela prouve la formule par récurrence.

\end{proof}

\begin{proposition}
\label{37}

Si $p^{2} \mid N$ avec $p$ premier alors $(\overline{\frac{N}{p}},\ldots,\overline{\frac{N}{p}}) \in (\mathbb{Z}/N\mathbb{Z})^{2p}$ est une solution monomiale minimale de \eqref{p}.

\end{proposition}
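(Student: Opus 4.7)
\bigskip
\noindent\textbf{Plan de démonstration.}
Le plan s'appuie sur la proposition \ref{35}, qui garantit déjà que le $(2p)$-uplet $(\overline{\frac{N}{p}}, \ldots, \overline{\frac{N}{p}})$ est solution de \eqref{p}. La taille de la solution $\overline{\frac{N}{p}}$-monomiale minimale n'est autre que l'ordre de $M_{1}(\overline{\frac{N}{p}})$ dans $PSL_{2}(\mathbb{Z}/N\mathbb{Z})$ ; elle divise donc $2p$. Comme $p$ est premier, les diviseurs de $2p$ se limitent à $1$, $2$, $p$ et $2p$, et il suffit d'éliminer les trois premiers.

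Les cas $n=1$ et $n=2$ sont immédiats par la proposition \ref{32} : \eqref{p} n'a pas de solution de taille 1, et la seule solution de taille 2 est $(\overline{0}, \overline{0})$. Or $0 < \frac{N}{p} < N$, donc $\overline{\frac{N}{p}} \neq \overline{0}$, ce qui exclut ces deux possibilités.

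L'étape principale, et la plus délicate, consiste à exclure le cas $n=p$. Via la formule du continuant rappelée ci-dessus, l'égalité $M_{p}(\overline{\frac{N}{p}}, \ldots, \overline{\frac{N}{p}}) = \pm Id$ équivaut à $K_{p-1}(\overline{\frac{N}{p}}, \ldots, \overline{\frac{N}{p}}) = \overline{0}$. Le lemme précédent en fournit une expression explicite comme combinaison linéaire de puissances de $\overline{\frac{N}{p}}$. L'observation clé est que, puisque $p^{2} \mid N$, on a $\left(\overline{\frac{N}{p}}\right)^{2} = \overline{N \cdot \frac{N}{p^{2}}} = \overline{0}$, et donc toutes les puissances d'exposant supérieur ou égal à $2$ s'annulent modulo $N$. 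Il ne reste alors à examiner que les monômes d'exposant $0$ ou $1$ apparaissant dans la somme.

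Pour $p=2$, celle-ci se réduit au seul terme $\overline{\frac{N}{p}} \neq \overline{0}$. Pour $p$ impair, $p-1$ est pair donc tous les exposants $p-1-2k$ sont pairs ; seul le terme constant (indice $k = (p-1)/2$) subsiste et vaut $(-1)^{(p-1)/2} = \pm \overline{1}$, non nul dans $\mathbb{Z}/N\mathbb{Z}$ puisque $N \geq p^{2} \geq 4$. On en déduit $K_{p-1} \neq \overline{0}$, donc $n \neq p$, et finalement $n = 2p$, comme annoncé.
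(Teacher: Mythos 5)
Votre démonstration est correcte et suit essentiellement la même démarche que celle de l'article : réduction aux diviseurs $1$, $2$, $p$, $2p$ de l'ordre de $M_{1}(\overline{\frac{N}{p}})$ dans $PSL_{2}(\mathbb{Z}/N\mathbb{Z})$, puis exclusion du cas $n=p$ via la formule du continuant, en utilisant le fait que $p^{2}\mid N$ annule toutes les puissances d'exposant $\geq 2$ de $\overline{\frac{N}{p}}$ pour ne laisser que le terme constant $(-1)^{\frac{p-1}{2}}\neq\overline{0}$. La seule différence est cosmétique : l'article renvoie au théorème \ref{24} pour $p=2$ tandis que vous traitez ce cas uniformément, ce qui est tout aussi valable.
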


\begin{proof}

Si $p=2$ alors le résultat est vrai (voir théorème \ref{24}). On suppose maintenant $p \geq 3$. Par ce qui précède, $(\overline{\frac{N}{p}},\ldots,\overline{\frac{N}{p}}) \in (\mathbb{Z}/N\mathbb{Z})^{2p}$ est une solution monomiale de \eqref{p}. Ainsi, la taille de la solution $\overline{\frac{N}{p}}$-monomiale minimale divise $2p$. Donc, celle-ci est égale à 1, 2, $p$ ou $2p$. \eqref{p} n'a pas de solution de taille 1, et $\overline{\frac{N}{p}} \neq \overline{0}$ (sinon $N \mid \frac{N}{p}$ et donc $\frac{N}{p} \geq N$) donc la solution $\overline{\frac{N}{p}}$-monomiale minimale n'est pas de taille 2. Supposons par l'absurde que la solution $\overline{\frac{N}{p}}$-monomiale minimale est de taille $p$.
\\
\\ Il existe $\epsilon$ dans $\{\pm 1\}$ tel que \[\overline{\epsilon}Id=M_{p}(\overline{\frac{N}{p}},\ldots,\overline{\frac{N}{p}})=\begin{pmatrix}
    K_{p}(\overline{\frac{N}{p}},\ldots,\overline{\frac{N}{p}}) & -K_{p-1}(\overline{\frac{N}{p}},\ldots,\overline{\frac{N}{p}}) \\
    K_{p-1}(\overline{\frac{N}{p}},\ldots,\overline{\frac{N}{p}})  & -K_{p-2}(\overline{\frac{N}{p}},\ldots,\overline{\frac{N}{p}}) 
   \end{pmatrix}.\]
	
\noindent Donc, $K_{p-1}(\overline{\frac{N}{p}},\ldots,\overline{\frac{N}{p}})=\overline{0}$. Notons $K=K_{p-1}(\overline{\frac{N}{p}},\ldots,\overline{\frac{N}{p}})$. On a, par le lemme précédent,
\begin{eqnarray*}
K &=& \overline{\sum_{k=0}^{E[\frac{p-1}{2}]} (-1)^{k}\binom{p-1-k}{k}(\frac{N}{p})^{p-1-2k}} \\
  &=& \overline{\sum_{k=0}^{\frac{p-1}{2}} (-1)^{k}\binom{p-1-k}{k}(\frac{N}{p})^{p-1-2k}}~({\rm car}~p-1~{\rm est~pair}) \\
	&=& \overline{\sum_{k=0}^{\frac{p-1}{2}-1} (-1)^{k}\binom{p-1-k}{k}\frac{N^{p-1-2k}}{p^{p-1-2k}}+ (-1)^{\frac{p-1}{2}}\binom{\frac{p-1}{2}}{\frac{p-1}{2}}}  \\
	&=& \overline{(-1)^{\frac{p-1}{2}} + \sum_{k=0}^{\frac{p-1}{2}-1} (-1)^{k}\binom{p-1-k}{k}N\frac{N}{p^{2}}\frac{N^{p-1-2k-2}}{p^{p-1-2k-2}}}~(p-1-2k-2 \geq 0~{\rm et}~p^{2} \mid N) \\
	&=& \overline{(-1)^{\frac{p-1}{2}}} \\
	& \neq & \overline{0}. \\
\end{eqnarray*}

\noindent Ceci est absurde. Donc, la solution $\overline{\frac{N}{p}}$-monomiale minimale est de taille $2p$.

\end{proof}

\subsection{Réductibilité dans le cas $N=l^{n}$}

On se place dans le cas où $N=l^{n}$ avec $n$ et $l$ supérieurs à 2. On sait que le $2l^{n-1}$-uplet $(\overline{l},\ldots,\overline{l})$ d'éléments de $\mathbb{Z}/N\mathbb{Z}$ est une solution de \eqref{p} (voir \cite{M} proposition 3.14). Cependant, la question de l’irréductibilité potentielle de cette solution reste ouverte. On se propose ici de répondre à cette dernière en démontrant le résultat suivant :
		
\begin{theorem}
\label{310}

Si $N=l^{n}$ avec $l,n \geq 2$ alors $(\overline{l},\ldots,\overline{l}) \in (\mathbb{Z}/N\mathbb{Z})^{2l^{n-1}}$ est une solution irréductible de \eqref{p} si et seulement si $l=2$.

\end{theorem}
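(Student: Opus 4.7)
I prove the two implications separately.

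\emph{If $l = 2$, then irreducible.} When $l = 2$ the tuple has size $2 \cdot 2^{n-1} = 2^n = N$ and coincides with $(\overline{2}, \ldots, \overline{2}) \in (\mathbb{Z}/N\mathbb{Z})^N$, which is irreducible by Theorem~\ref{31}(ii).

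\emph{If $l \geq 3$, then reducible.} I split by $n$. For $n = 2$, the tuple is exactly the solution treated in Proposition~\ref{36} (identifying $\overline{l} = \overline{N/l}$ with $N = l^2$), so reducibility follows immediately. For $n \geq 3$ the naive $4$-tuple $(\overline{-l}, \overline{l}, \overline{l}, \overline{-l})$ used in Proposition~\ref{36} is no longer a solution (since $\overline{l^2} \neq \overline{0}$ in $\mathbb{Z}/l^n\mathbb{Z}$), so a larger inner block is needed. I seek a decomposition
\[(\overline{l}, \ldots, \overline{l})_{2l^{n-1}} \sim (\overline{l-c}, \overline{l}, \ldots, \overline{l}, \overline{l-c}) \oplus (\overline{c}, \overline{l}, \ldots, \overline{l}, \overline{c})\]
whose symmetric inner tuple has some even size $p$ with $4 \leq p \leq 2l^{n-1}-2$. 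Using the continuant representation of $M_n$ recalled in this section, this inner tuple is a solution of $(E_{l^n})$ if and only if $K_{p-2}(l, \ldots, l) \equiv \pm 1 \pmod{l^n}$; in that case $c$ is forced to be $\mp K_{p-3}(l, \ldots, l) \pmod{l^n}$, and the outer tuple is automatically a solution by the compatibility of $\oplus$ with solutions recalled in Section~\ref{RP}.

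The crux is then to exhibit a valid $p$. For $l$ odd, I establish by induction on $k \geq 0$ the congruence
\[M_1(\overline{l})^{2l^{k}} \equiv -Id + l^{k+1} M_1(\overline{l}) \pmod{l^{k+2}},\]
starting from the Cayley--Hamilton relation $M_1(\overline{l})^2 = l \cdot M_1(\overline{l}) - Id$ and propagating via the binomial formula; the terms of index $j \geq 2$ vanish modulo $l^{k+2}$ thanks to the integer factor $(l-1)/2$ hidden in $\binom{l}{2}$ when $l$ is odd. Specializing at $k = n-2$ yields $M_1(\overline{l})^{2l^{n-2}} \equiv -Id + l^{n-1} S \pmod{l^n}$, where $S = \begin{pmatrix} 0 & -1 \\ 1 & 0 \end{pmatrix}$, whence $K_{2l^{n-2}-2}(l, \ldots, l) \equiv 1 \pmod{l^n}$. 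Taking $p = 2l^{n-2}$ gives $c = l - l^{n-1}$; the inner tuple $(\overline{l-l^{n-1}}, \overline{l}, \ldots, \overline{l}, \overline{l-l^{n-1}})$ has size $2l^{n-2} \geq 6$ and the outer $(\overline{l^{n-1}}, \overline{l}, \ldots, \overline{l}, \overline{l^{n-1}})$ has size $2l^{n-2}(l-1) + 2 \geq 8$, both $\geq 3$. For $l$ even the inductive identity breaks at its very first step (the $j = 2$ binomial term no longer vanishes modulo $l^3$), so $p = 2l^{n-2}$ typically fails, and the main obstacle is to locate a valid $p$ in the required range by a finer $l$-adic analysis of the continuant sequence $K_{2k}(l, \ldots, l) \pmod{l^n}$.
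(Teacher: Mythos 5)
Your argument is sound for $l=2$, for $n=2$ (via la proposition \ref{36}), and for $l\geq 3$ impair: in that range your induction $M_{1}(\overline{l})^{2l^{k}}\equiv -Id+l^{k+1}M_{1}(\overline{l})\ \ [l^{k+2}]$ is correct, it does give $K_{2l^{n-2}-2}(l,\ldots,l)\equiv 1\ [l^{n}]$ and $c=l-l^{n-1}$, and the resulting decomposition with inner block of size $2l^{n-2}$ is a valid (and somewhat more economical) alternative to the one used in the paper. But the theorem is an equivalence, and your treatment of the remaining case --- $l$ pair, $l\geq 4$, $n\geq 3$ (for instance $N=4^{3}=64$) --- stops at the statement of the difficulty: you observe that the $j=2$ term of the binomial expansion no longer vanishes because $l\nmid\binom{l}{2}$ when $l$ is even, and you defer the construction of a valid block to ``a finer $l$-adic analysis'' that is never carried out. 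This is a genuine gap: without it, reducibility is not established for even $l\geq 4$, so the ``seulement si'' direction is unproved in those cases.

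The paper avoids the parity issue by expanding a different power. Instead of iterating $l$-th powers (where the troublesome $\binom{l}{2}$ appears), it writes $M_{4l^{n-2}}(\overline{l},\ldots,\overline{l})^{-1}=\bigl(-Id+l\,M_{1}(\overline{l})^{-1}\bigr)^{2l^{n-2}}$ and applies the binomial theorem once, using the divisibility $l^{n-j}\mid\binom{2l^{n-2}}{j}$ for $2\leq j\leq n-1$ (lemme \ref{311}), which is proved for every $l\geq 2$ regardless of parity. This produces the solution $(\overline{2l^{n-1}},\overline{l},\ldots,\overline{l},\overline{2l^{n-1}})$ of size $2l^{n-1}-4l^{n-2}+2$ (lemme \ref{312}) and hence the reduction of $(\overline{l},\ldots,\overline{l})$ for all $l>2$. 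To complete your proof you would need either to push your $l$-adic analysis through the even case or to switch to an expansion of this uniform type.
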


\noindent Pour cela on a besoin de plusieurs résultats intermédiaires. On commence par le résultat classique suivant :

\begin{lemma}
\label{3101}

Soient $n \in \mathbb{N}^{*}$ et $k \in [\![1;n]\!]$, $\frac{n}{{\rm pgcd}(n,k)}~{\rm divise}~\binom{n}{k}$.

\end{lemma}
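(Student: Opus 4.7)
Le plan est d'exploiter l'identité classique $k\binom{n}{k}=n\binom{n-1}{k-1}$, qui se vérifie immédiatement en développant les deux membres en factorielles (chacun vaut $\frac{n!}{(k-1)!(n-k)!}$).

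Concrètement, je poserais $d={\rm pgcd}(n,k)$ et j'écrirais $n=dn'$, $k=dk'$ avec ${\rm pgcd}(n',k')=1$. En reportant dans l'identité précédente puis en simplifiant par $d$ (qui est non nul puisque $k \geq 1$), on arrive à $k'\binom{n}{k}=n'\binom{n-1}{k-1}$. Le membre de droite étant manifestement divisible par $n'$, on en déduit que $n'$ divise $k'\binom{n}{k}$. Comme $n'$ et $k'$ sont premiers entre eux, le lemme de Gauss garantit alors que $n'=\frac{n}{{\rm pgcd}(n,k)}$ divise $\binom{n}{k}$, ce qu'on voulait.

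Cette preuve est entièrement classique et ne présente à mon sens aucun obstacle réel~: tout repose sur l'identité $k\binom{n}{k}=n\binom{n-1}{k-1}$ couplée au lemme de Gauss. Le seul point auquel il faut faire un peu attention est la simplification par $d$, mais elle est licite car $d \geq 1$ dès que $k \geq 1$. On pourrait également envisager une preuve par valuations $p$-adiques en utilisant la formule de Kummer ou de Legendre, mais l'argument direct ci-dessus me semble nettement plus économique dans ce contexte.
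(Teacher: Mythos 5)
Votre preuve est correcte et repose exactement sur les mêmes ingrédients que celle de l'article : l'identité $k\binom{n}{k}=n\binom{n-1}{k-1}$ et le lemme de Gauss après division par ${\rm pgcd}(n,k)$. La seule différence, purement cosmétique, est que vous appliquez Gauss directement pour conclure $n'\mid\binom{n}{k}$, alors que l'article l'applique d'abord pour obtenir $k'\mid\binom{n-1}{k-1}$ avant d'en déduire le résultat ; votre version est même légèrement plus directe.
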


\begin{proof}

\[{n \choose k}=\frac{n}{k}{n-1 \choose k-1}=\frac{\frac{n}{{\rm pgcd}(n,k)}}{\frac{k}{{\rm pgcd}(n,k)}}{n-1 \choose k-1}.\] Donc, comme ${n \choose k} \in \mathbb{N}^{*}$, on a $\frac{k}{{\rm pgcd}(n,k)}~{\rm divise}~\frac{n}{{\rm pgcd}(n,k)}{n-1 \choose k-1}$. Comme $\frac{k}{{\rm pgcd}(n,k)}$ et $\frac{n}{{\rm pgcd}(n,k)}$ sont premiers entre eux, on a, par le lemme de Gauss, $\frac{k}{{\rm pgcd}(n,k)}~{\rm divise}~{n-1 \choose k-1}$. Donc, $\frac{n}{{\rm pgcd}(n,k)}~{\rm divise}~{n \choose k}$.

\end{proof}

\begin{lemma}
\label{311}

Soient $(n,l) \in (\mathbb{N}^{*})^{2}$, $n \geq 3$, $l \geq 2$ et $j \in [\![2;n-1]\!]$. On a $l^{n-j}~{\rm divise}~\binom{2l^{n-2}}{j}$.

\end{lemma}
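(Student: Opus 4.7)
The plan is to invoke Lemma \ref{3101} with $m = 2l^{n-2}$ and $k = j$, which immediately gives that $\dfrac{2l^{n-2}}{\gcd(2l^{n-2},j)}$ divides $\binom{2l^{n-2}}{j}$. It will then suffice to show that $l^{n-j}$ divides this quotient, which amounts to the divisibility $l^{n-j} \cdot \gcd(2l^{n-2},j) \mid 2l^{n-2}$. Since $j \geq 2$ forces $n-j \leq n-2$, the factor $l^{n-j}$ can be pulled out of $l^{n-2}$, and the whole statement reduces to
\[
\gcd(2l^{n-2},\,j) \;\;\bigm|\;\; 2\,l^{\,j-2}.
\]

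This residual divisibility will be checked prime by prime. For any prime $p$ coprime to $l$, both $v_p(2l^{n-2})$ and $v_p(2l^{j-2})$ equal $v_p(2)$, so the inequality on valuations is automatic. For a prime $p$ dividing $l$, the estimate $v_p(\gcd(2l^{n-2},j)) \leq v_p(j) \leq \log_p(j)$ reduces the task to verifying
\[
v_p(j) \;\leq\; v_p(2) + (j-2)\,v_p(l).
\]

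The verification splits into two subcases. For $p$ odd one has $v_p(2) = 0$ and $v_p(l) \geq 1$, so the required inequality becomes $\log_p(j) \leq j-2$, which is trivial for $j = 2$ (both sides vanish) and for $j \geq 3$ follows from $p^{j-2} \geq 3^{j-2} \geq j$. For $p = 2$ the contribution $v_p(2) = 1$ is essential: together with $v_2(l) \geq 1$ it produces a right-hand side of at least $j-1$, which dominates $v_2(j) \leq \log_2(j) \leq j-1$. The argument is essentially routine; the only point of care will be the tight edge cases, particularly $j = 2$ with $p = 2 \mid l$, where the inequality becomes an equality and none of the estimates can be weakened. I do not anticipate any substantive obstacle beyond this bookkeeping.
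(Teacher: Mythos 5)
Your proposal is correct and follows essentially the same route as the paper: both invoke Lemma \ref{3101} with $2l^{n-2}$ and $j$, then reduce to bounding the $p$-part of $\gcd(2l^{n-2},j)$ by $2\,l^{j-2}$ via the same key estimates $p^{j-2}\geq j$ for $p$ odd and $p^{j-1}\geq j$ for $p=2$. The only cosmetic differences are that the paper treats $j=2$ by a separate direct computation of $\binom{2l^{n-2}}{2}$ and organizes the cases by the parity of $l$ rather than prime-by-prime; your $p$-adic bookkeeping, including the tight case $j=2$, $p=2\mid l$, checks out.
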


\begin{proof}

Si $j=2$ alors \[\binom{2l^{n-2}}{j}=\frac{2l^{n-2}(2l^{n-2}-1)}{2}=l^{n-2}(2l^{n-2}-1)\] et donc le résultat est vrai. On peut ainsi supposer $n \geq 4$ et $j \geq 3$. On a par le lemme précédent
\[\frac{2l^{n-2}}{{\rm pgcd}(2l^{n-2},j)}~{\rm divise}~\binom{2l^{n-2}}{j}.\] Notons $l=p_{1}^{\alpha_{1}} \ldots p_{r}^{\alpha_{r}}$ la décomposition de $l$ en facteurs premiers. On a deux cas :
\begin{itemize}
\item $l$ est impair. Dans ce cas, pour tout $i$ appartenant à $[\![1;r]\!]$, $p_{i} \neq 2$. $\exists(\beta_{1},\ldots,\beta_{r},a) \in \mathbb{N}^{r+1}$ tels que ${\rm pgcd}(2l^{n-2},j)=2^{a}p_{1}^{\beta_{1}} \ldots p_{r}^{\beta_{r}}$. Si $j$ est pair alors $a=1$ et si $j$ est impair alors $a=0$. 
\\
\\Montrons que pour tout $i$ dans $[\![1;r]\!]$, $\beta_{i} \leq \alpha_{i}(j-2)$. Supposons par l'absurde qu'il existe $i$ dans $[\![1;r]\!]$ tel que $\beta_{i} > \alpha_{i}(j-2)$. Par récurrence, on montre que si $j \geq 3$ on a $p_{i}^{j-2} \geq j$ (car $p_{i} > 2$). On a \[p_{i}^{\beta_{i}}> p_{i}^{\alpha_{i}(j-2)} \geq p_{i}^{j-2} \geq j.\] Donc, ${\rm pgcd}(2l^{n-2},j) >j$ ce qui est absurde.
\\
\\Ainsi, pour tout $i$ appartenant à $[\![1;r]\!]$, $\beta_{i} \leq \alpha_{i}(j-2)$. De plus, on a $a=1$ si $j$ est pair et $a=0$ si $j$ est impair. On en déduit que $l^{n-j}$ divise $\frac{2l^{n-2}}{{\rm pgcd}(2l^{n-2},j)}$. Donc, $l^{n-j}$ divise $\binom{2l^{n-2}}{j}$.
\\
\item $l$ est pair. Dans ce cas, on peut supposer $p_{1}=2$, et donc $p_{j} >2$ pour $j$ dans $[\![2;r]\!]$. 
\\$\exists(\beta_{1},\ldots,\beta_{r}) \in \mathbb{N}^{r}$ tels que ${\rm pgcd}(2l^{n-2},j)=p_{1}^{\beta_{1}} \ldots p_{r}^{\beta_{r}}$.
\\
\\On montre, en procédant comme dans le premier cas, que pour tout $i$ dans $[\![2;r]\!]$, $\beta_{i} \leq \alpha_{i}(j-2)$. Montrons que $\beta_{1} \leq \alpha_{1}(j-2)+1$. Si $\beta_{1} > \alpha_{1}(j-2)+1$ alors \[p_{1}^{\beta_{1}} > p_{1}^{\alpha_{1}(j-2)+1} \geq p_{1}^{j-1} \geq j.\] Donc, ${\rm pgcd}(2l^{n-2},j) >j$ ce qui est absurde.
\\
\\On en déduit que pour tout $i$ dans $[\![2;r]\!]$, $\beta_{i} \leq \alpha_{i}(j-2)$ et $\beta_{1} \leq \alpha_{1}(j-2)+1$. Ainsi, $l^{n-j}$ divise $\frac{2l^{n-2}}{{\rm pgcd}(2l^{n-2},j)}$. Donc, $l^{n-j}~{\rm divise}~\binom{2l^{n-2}}{j}$.
\\

\end{itemize}

\noindent Donc, le résultat est vrai.

\end{proof}

\begin{lemma}
\label{312}

Si $N=l^{n}$ avec $l>2$ et $n \geq 3$ alors $(\overline{2l^{n-1}},\overline{l},\ldots,\overline{l},\overline{2l^{n-1}}) \in (\mathbb{Z}/N\mathbb{Z})^{2l^{n-1}-4l^{n-2}+2}$ est une solution de \eqref{p}.

\end{lemma}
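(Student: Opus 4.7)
Ma stratégie consiste en un calcul matriciel direct. Je pose $A = M_{1}(\overline{l})$, $B = M_{1}(\overline{2l^{n-1}})$, $u = l^{n-1}$ et $p = 2l^{n-1} - 4l^{n-2} + 2$, de sorte que $M_{p}(\overline{2l^{n-1}}, \overline{l}, \ldots, \overline{l}, \overline{2l^{n-1}}) = BA^{p-2}B$. L'hypothèse $n \geq 3$ assure $\overline{(2u)^{2}} = \overline{4l^{2n-2}} = \overline{0}$ dans $\mathbb{Z}/l^{n}\mathbb{Z}$. En écrivant $A^{k} = \left(\begin{smallmatrix} \alpha & \beta \\ \gamma & \delta \end{smallmatrix}\right)$, un calcul direct de $BA^{k}B$ exploitant $\overline{(2u)^{2}} = 0$ fournira l'équivalence : $BA^{k}B = \epsilon \cdot Id$ (avec $\epsilon \in \{\pm 1\}$) si et seulement si $A^{k} = -\epsilon(Id + \overline{2u}S)$, où $S = \left(\begin{smallmatrix} 0 & -1 \\ 1 & 0 \end{smallmatrix}\right)$.

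J'utiliserai ensuite la proposition 3.14 de \cite{M}, selon laquelle $(\overline{l}, \ldots, \overline{l}) \in (\mathbb{Z}/N\mathbb{Z})^{2u}$ est solution de \eqref{p}, ce qui donne $A^{2u} = \epsilon_{0}\, Id$ pour un certain $\epsilon_{0} \in \{\pm 1\}$. Puisque $p - 2 = 2u - 4l^{n-2}$, on obtient $A^{p-2} = \epsilon_{0} (A^{4l^{n-2}})^{-1}$, et l'identité $(Id + \overline{2u}S)(Id - \overline{2u}S) = Id - \overline{(2u)^{2}}S^{2} = Id$ fournit l'inverse de $Id + \overline{2u}S$. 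L'énoncé du lemme se ramènera ainsi à la congruence $A^{4l^{n-2}} \equiv \pm(Id - \overline{2u}S) \pmod{l^{n}}$.

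Pour l'établir, j'invoquerai la relation de Cayley-Hamilton $A^{2} = \overline{l}A - Id$, qui donne $A^{k} = -K_{k-2}(\overline{l}, \ldots) \cdot Id + K_{k-1}(\overline{l}, \ldots) \cdot A$. Comme $\overline{2u}(A - S) \equiv 0 \pmod{l^{n}}$ (puisque $\overline{2ul} = \overline{2l^{n}} = \overline{0}$), la congruence précédente équivaudra aux deux égalités
\[K_{4l^{n-2}-1}(\overline{l}, \ldots, \overline{l}) \equiv \mp \overline{2l^{n-1}} \quad\text{et}\quad K_{4l^{n-2}-2}(\overline{l}, \ldots, \overline{l}) \equiv \mp 1 \pmod{l^{n}}.\]

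Ces congruences s'établiront via la formule explicite du lemme précédent, $K_{j}(\overline{l}, \ldots, \overline{l}) = \sum_{k=0}^{\lfloor j/2 \rfloor} (-1)^{k} \binom{j-k}{k} \overline{l}^{j-2k}$. Après le changement d'indice $k = 2l^{n-2} - 1 - j'$, les deux sommes se réorganiseront en séries dont le terme dominant ($j' = 0$) fournit respectivement $-\overline{2l^{n-1}}$ et $-\overline{1}$, en accord avec le signe $\mp = -$. Le principal obstacle sera de démontrer que tous les termes pour $j' \geq 1$ s'annulent modulo $l^{n}$, ce qui reviendra aux divisibilités $l^{n-2j'} \mid \binom{2l^{n-2} - 1 + j'}{2j'}$ et $l^{n-2j'-1} \mid \binom{2l^{n-2} + j'}{2j' + 1}$, à établir par une analyse fine de valuation $l$-adique dans la veine du lemme \ref{311}.
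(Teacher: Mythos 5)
Votre mise en place est correcte et suit au fond la même architecture que la preuve du texte : l'équivalence $BA^{k}B=\overline{\epsilon}\,Id \Leftrightarrow A^{k}=-\overline{\epsilon}(Id+\overline{2l^{n-1}}S)$ (qui repose bien sur $\overline{4l^{2n-2}}=\overline{0}$, vrai dès que $n\geq 2$), l'appel à la proposition 3.14 de \cite{M} pour écrire $A^{p-2}=\pm(A^{4l^{n-2}})^{-1}$, et la réduction finale à $A^{4l^{n-2}}=\pm(Id-\overline{2l^{n-1}}S)$ sont toutes justes. Le problème est que vous laissez entièrement ouverte l'étape qui porte tout le contenu arithmétique du lemme : les divisibilités $l^{n-2j'}\mid\binom{2l^{n-2}-1+j'}{2j'}$ et $l^{n-2j'-1}\mid\binom{2l^{n-2}+j'}{2j'+1}$ ne sont pas démontrées, et elles ne découlent pas du lemme \ref{311}. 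Celui-ci ne concerne que les coefficients $\binom{2l^{n-2}}{j}$, dont l'entrée supérieure $2l^{n-2}$ est précisément ce qui permet d'appliquer le lemme \ref{3101} ; vos coefficients n'ont pas cette forme et exigeraient une analyse nouvelle et plus délicate ($l$ pouvant de surcroît être composé, la « valuation $l$-adique » doit elle-même passer par une décomposition en facteurs premiers). En l'état, la preuve est donc incomplète en son point crucial, et rien ne garantit a priori que l'annulation terme à terme que vous visez soit la bonne façon d'obtenir les deux congruences.

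La lacune se comble pourtant immédiatement en évitant le détour par la formule explicite des continuants : dans votre propre cadre, $A^{4l^{n-2}}=(A^{2})^{2l^{n-2}}=(\overline{l}A-Id)^{2l^{n-2}}$ par Cayley--Hamilton, et comme $Id$ est centrale le binôme de Newton donne
\[A^{4l^{n-2}}=\sum_{k=0}^{2l^{n-2}}\binom{2l^{n-2}}{k}(-1)^{2l^{n-2}-k}\overline{l}^{k}A^{k}=Id-\overline{2l^{n-1}}A=Id-\overline{2l^{n-1}}S,\]
les termes $2\leq k\leq n-1$ disparaissant exactement par le lemme \ref{311}, ceux avec $k\geq n$ parce que $\overline{l}^{k}=\overline{0}$, et la dernière égalité venant de $\overline{2l^{n-1}}(A-S)=\overline{0}$. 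C'est essentiellement le calcul du texte, qui applique le binôme à $(-Id+\overline{l}\,C)^{2l^{n-2}}$ pour $C=SAS^{-1}$ afin d'obtenir l'inverse $M_{4l^{n-2}}(\overline{l},\ldots,\overline{l})^{-1}$ : mêmes coefficients $\binom{2l^{n-2}}{k}$, même lemme \ref{311}.
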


\begin{proof}

On a $2l^{n-1}-4l^{n-2}+2=2l^{n-2}(l-2)+2 \geq 2l^{n-2}+2 \geq 2l+2 \geq 8$.
\\
\\$(\overline{2l^{n-1}},\overline{l},\ldots,\overline{l},\overline{2l^{n-1}}) \sim (\overline{2l^{n-1}},\overline{2l^{n-1}},\overline{l},\ldots,\overline{l})$. Donc, $(\overline{2l^{n-1}},\overline{l},\ldots,\overline{l},\overline{2l^{n-1}})$ est solution de \eqref{p} si et seulement si $(\overline{2l^{n-1}},\overline{2l^{n-1}},\overline{l},\ldots,\overline{l})$ l'est aussi. On a \[M=M_{2l^{n-1}-4l^{n-2}+2}(\overline{2l^{n-1}},\overline{2l^{n-1}},\overline{l},\ldots,\overline{l})=M_{2l^{n-1}-4l^{n-2}}(\overline{l},\ldots,\overline{l})M_{2}(\overline{2l^{n-1}},\overline{2l^{n-1}}).\]

\begin{eqnarray*}
M_{2l^{n-1}-4l^{n-2}}(\overline{l},\ldots,\overline{l}) &=& M_{2l^{n-1}}(\overline{l},\ldots,\overline{l})M_{4l^{n-2}}(\overline{l},\ldots,\overline{l})^{-1} \\ 
                                                        &=& \overline{(-1)^{l^{n-1}}}M_{4l^{n-2}}(\overline{l},\ldots,\overline{l})^{-1}~\\
                                                        &=& \overline{(-1)^{l^{n-1}}}(\begin{pmatrix}
   \overline{0}   & \overline{1} \\
   \overline{-1} & \overline{l}
\end{pmatrix})^{4l^{n-2}}\\
                                                        &=& \overline{(-1)^{l^{n-1}}}(\begin{pmatrix}
   \overline{0}   & \overline{1} \\
   \overline{-1} & \overline{l}
\end{pmatrix}^{2})^{2l^{n-2}} \\
                                                        &=& \overline{(-1)^{l^{n-1}}}\begin{pmatrix}
   \overline{-1}   & \overline{l} \\
   \overline{-l} & \overline{-1+l^{2}}
\end{pmatrix}^{2l^{n-2}} \\
                                                        &=& \overline{(-1)^{l^{n-1}}}\overline{(-Id+l\begin{pmatrix}
   0   & 1 \\
   -1 & l
\end{pmatrix})^{2l^{n-2}}} \\
																				                &=& \overline{ \sum_{k=0}^{2l^{n-2}} \binom{2l^{n-2}}{k} (-1)^{2l^{n-2}+l^{n-1}-k}l^{k}\begin{pmatrix}
   0   & 1 \\
   -1 & l
\end{pmatrix}^{k}}~{\rm (bin\hat{o}me~de~Newton)} \\
																				                &=& \overline{ \sum_{k=0}^{n-1} \binom{2l^{n-2}}{k} (-1)^{l^{n-2}(l+2)-k}l^{k}\begin{pmatrix}
   0   & 1 \\
   -1 & l
\end{pmatrix}^{k}} \\
																				                &=& \overline{(-1)^{l^{n-2}(l+2)}Id+(-1)^{l^{n-2}(l+2)-1}(2l^{n-2})l\begin{pmatrix}
   0   & 1 \\
   -1 & l
\end{pmatrix}}. \\
\end{eqnarray*}

\noindent La dernière égalité est valide car, par le lemme précédent, $l^{n-k}$ divise $\binom{2l^{n-2}}{k}$ pour $2 \leq k \leq n-1$.
\\
\\ \noindent De plus, $M_{2}(\overline{2l^{n-1}},\overline{2l^{n-1}})=\begin{pmatrix}
   \overline{-1}   & \overline{-2l^{n-1}} \\
   \overline{2l^{n-1}} & \overline{-1}
\end{pmatrix}.$
\\
\\Donc, on a
\begin{eqnarray*}
M &=& M_{2l^{n-1}-4l^{n-2}}(\overline{l},\ldots,\overline{l})M_{2}(\overline{2l^{n-1}},\overline{2l^{n-1}})\\
  &=& (\overline{(-1)^{l^{n-2}(l+2)}Id}+\overline{(-1)^{l^{n-2}(l+2)-1}(2l^{n-2})l}\begin{pmatrix}
   \overline{0}   & \overline{1} \\
   \overline{-1} & \overline{l}
\end{pmatrix})\begin{pmatrix}
   \overline{-1}   & \overline{-2l^{n-1}} \\
   \overline{2l^{n-1}} & \overline{-1}
\end{pmatrix} \\
 &=& \overline{(-1)^{l^{n-2}(l+2)}}\begin{pmatrix}
   \overline{-1}   & \overline{-2l^{n-1}} \\
   \overline{2l^{n-1}} & \overline{-1}
\end{pmatrix}+\overline{(-1)^{l^{n-2}(l+2)-1}(2l^{n-1})}\begin{pmatrix}
   \overline{2l^{n-1}}   & \overline{-1} \\
   \overline{1} & \overline{2l^{n-1}-l}
\end{pmatrix}\\
 &=& \overline{(-1)^{l^{n-2}(l+2)}}(\begin{pmatrix}
   \overline{-1}   & \overline{-2l^{n-1}} \\
   \overline{2l^{n-1}} & \overline{-1}
\end{pmatrix}-\begin{pmatrix}
   \overline{0}   & \overline{-2l^{n-1}} \\
   \overline{2l^{n-1}} & \overline{0}
\end{pmatrix}) \\
 &=& \overline{(-1)^{l^{n-2}(l+2)+1}Id}.
\end{eqnarray*}

\end{proof}

\noindent On peut maintenant démontrer le résultat principal de la section.

\begin{proof}[Démonstration du Théorème \ref{310}]

Soit $N=l^{n}$ avec $l \geq 2$ et $n \geq 2$. $(\overline{l},\ldots,\overline{l}) \in (\mathbb{Z}/N\mathbb{Z})^{2l^{n-1}}$ est solution de \eqref{p} (voir \cite{M} proposition 3.14).
\\
\\Si $n=2$ alors le résultat est vrai (voir proposition \ref{36}).
\\
\\Supposons $n \geq 3$. Si $l=2$ alors la solution est irréductible (voir théorème \ref{31}). Supposons $l>2$. On a \[(\overline{l},\ldots,\overline{l})=(\overline{l-2l^{n-1}},\overline{l},\ldots,\overline{l},\overline{l-2l^{n-1}}) \oplus (\overline{2l^{n-1}},\overline{l},\ldots,\overline{l},\overline{2l^{n-1}})\] avec $(\overline{2l^{n-1}},\overline{l},\ldots,\overline{l},\overline{2l^{n-1}}) \in (\mathbb{Z}/N\mathbb{Z})^{2l^{n-1}-4l^{n-2}+2}$ et $(\overline{l-2l^{n-1}},\overline{l},\ldots,\overline{l},\overline{l-2l^{n-1}}) \in (\mathbb{Z}/N\mathbb{Z})^{4l^{n-2}}$. 
\\
\\De plus, par le lemme précédent, $(\overline{2l^{n-1}},\overline{l},\ldots,\overline{l},\overline{2l^{n-1}}) \in (\mathbb{Z}/N\mathbb{Z})^{2l^{n-1}-4l^{n-2}+2}$ est une solution de \eqref{p} de taille supérieure à 3. $(\overline{l-2l^{n-1}},\overline{l},\ldots,\overline{l},\overline{l-2l^{n-1}}) \in (\mathbb{Z}/N\mathbb{Z})^{4l^{n-2}}$ est de taille supérieure à 4. Donc, $(\overline{l},\ldots,\overline{l}) \in (\mathbb{Z}/N\mathbb{Z})^{2l^{n-1}}$ est une solution réductible de \eqref{p}.

\end{proof}

\begin{remark}

{\rm Notons que $(\overline{l-2l^{n-1}},\overline{l},\ldots,\overline{l},\overline{l-2l^{n-1}}) \in (\mathbb{Z}/N\mathbb{Z})^{4l^{n-2}}$ est aussi une solution de \eqref{p}.}

\end{remark}

\section{Solutions dynomiales minimales}\label{DM}

On s'intéresse dans cette section au concept de solutions dynomiales minimales défini dans la section \ref{RP} en démontrant notamment le théorème \ref{26}.

\subsection{Résultats préliminaires}

Pour effectuer la preuve du théorème \ref{26}, on a besoin de plusieurs résultats intermédiaires. On commence par le lemme ci-dessous  :

\begin{lemma}
\label{41}

Soit $n \in \mathbb{N} \cup \{-1\}$, $K_{n}(x_{1},\ldots,x_{n})=(-1)^{n}K_{n}(-x_{1},\ldots,-x_{n})$.

\end{lemma}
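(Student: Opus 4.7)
La preuve se fait très naturellement par récurrence sur $n$. Les cas de base $n=-1$, $n=0$ et $n=1$ sont immédiats : on a $K_{-1}=0$, $K_0=1$ et $K_1(x_1)=x_1$, donc l'identité $K_n(x_1,\ldots,x_n)=(-1)^n K_n(-x_1,\ldots,-x_n)$ se vérifie directement dans chacun de ces trois cas.

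Pour l'hérédité, j'invoque la relation de récurrence classique sur les continuants, obtenue en développant le déterminant définissant $K_n$ selon sa dernière ligne (ou sa dernière colonne) :
\[ K_n(x_1,\ldots,x_n) = x_n\, K_{n-1}(x_1,\ldots,x_{n-1}) - K_{n-2}(x_1,\ldots,x_{n-2}). \]
En appliquant cette identité au $n$-uplet $(-x_1,\ldots,-x_n)$, on obtient
\[ K_n(-x_1,\ldots,-x_n) = -x_n\, K_{n-1}(-x_1,\ldots,-x_{n-1}) - K_{n-2}(-x_1,\ldots,-x_{n-2}). \]
L'hypothèse de récurrence (qui s'utilise sans peine dans les deux sens puisque $(-1)^k=(-1)^{-k}$) permet alors de remplacer chacun des deux continuants à droite par son homologue à signes positifs : le premier terme combine le signe $-x_n$ avec le facteur $(-1)^{n-1}$ et fournit $(-1)^n x_n K_{n-1}(x_1,\ldots,x_{n-1})$ ; le second terme combine le signe $-$ de la récurrence avec $(-1)^{n-2}$ et fournit $-(-1)^n K_{n-2}(x_1,\ldots,x_{n-2})$. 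Après factorisation du facteur commun $(-1)^n$, on reconnaît au crochet $K_n(x_1,\ldots,x_n)$, d'où l'identité annoncée.

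Il n'y a à vrai dire aucun véritable obstacle dans cette preuve : tout se joue dans la vérification des signes, et la seule précaution à prendre est de s'assurer que les parités $n-1$ et $n-2$ se combinent correctement pour produire le même facteur $(-1)^n$ dans les deux termes du développement.
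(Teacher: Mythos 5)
Votre preuve est correcte et suit essentiellement la même démarche que celle de l'article : une récurrence à deux pas fondée sur la relation de récurrence à trois termes des continuants, avec exactement le même jeu de signes. La seule différence, purement cosmétique, est que vous développez le déterminant selon la dernière ligne (récurrence en $x_n$) alors que l'article le développe selon la première (récurrence en $x_1$), les deux relations étant également valides pour les continuants.
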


\begin{proof}

On raisonne par récurrence sur $n$.
\\
\\ $K_{0}=1$ et $K_{-1}=0$ donc le résultat est vrai pour $n=-1$ et pour $n=0$.
\\
\\Supposons qu'il existe $n \in \mathbb{N}$ tel que la formule est vraie au rang $n$ et $n-1$. On a :
\begin{eqnarray*}
K_{n+1}(x_{1},\ldots,x_{n+1}) &=& x_{1}K_{n}(x_{2},\ldots,x_{n+1})-K_{n-1}(x_{3},\ldots,x_{n+1}) \\
                            &=& (-1)^{n}x_{1}K_{n}(-x_{2},\ldots,-x_{n+1})-(-1)^{n-1}K_{n-1}(-x_{3},\ldots,-x_{n+1}) \\
														&=& (-1)^{n-1}(-x_{1}K_{n}(-x_{2},\ldots,-x_{n+1})-K_{n-1}(-x_{3},\ldots,-x_{n+1})) \\
														&=& (-1)^{n+1}(-x_{1}K_{n}(-x_{2},\ldots,-x_{n+1})-K_{n-1}(-x_{3},\ldots,-x_{n+1})) \\
														&=& (-1)^{n+1}K_{n+1}(-x_{1},\ldots,-x_{n+1}). \\
\end{eqnarray*}		

\noindent Par récurrence, le résultat est vrai.

\end{proof}												

On a également besoin du résultat suivant qui est l'analogue de la proposition 3.15 de \cite{M} pour les solutions dynomiales.

\begin{lemma}
\label{42}

Soient $n \in \mathbb{N}^{*}$, $n \geq 4$ et $(\overline{a},\overline{b},\overline{k}) \in (\mathbb{Z}/N\mathbb{Z})^{3}$. Soit $\alpha \in \{\pm 1\}$.
\\i) Si $(\overline{a},\overline{\alpha k},\overline{-\alpha k},\ldots,\overline{\alpha k},\overline{-\alpha k},\overline{b}) \in (\mathbb{Z}/N\mathbb{Z})^{n}$ est solution de \eqref{p} alors $\overline{a}=\overline{-b}$ et on a \[\overline{0}=\overline{a}(\overline{\alpha k}+\overline{a}).\]
\\
\\ii)Si $(\overline{a},\overline{\alpha k},\overline{-\alpha k},\ldots,\overline{\alpha k},\overline{-\alpha k},\overline{\alpha k},\overline{b}) \in (\mathbb{Z}/N\mathbb{Z})^{n}$ est solution de \eqref{p} alors $\overline{a}=\overline{b}$ et on a \[\overline{2}=\overline{a}(\overline{\alpha k}+\overline{a}).\]
\\
	
\end{lemma}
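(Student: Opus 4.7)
Le plan est de calculer $M_n$ en le factorisant via le bloc alterné central, puis d'extraire les deux relations requises à l'aide du théorème de Cayley-Hamilton. Dans une première étape, j'utilise la convention $M_n(a_1, \ldots, a_n) = M_1(a_n) \cdots M_1(a_1)$ pour factoriser
\[M_n(\overline{a}, \overline{\alpha k}, \ldots, \overline{b}) = M_1(\overline{b}) \cdot D \cdot M_1(\overline{a}),\]
où $D$ désigne le produit matriciel correspondant au bloc alterné central. En posant $A := M_1(\overline{-\alpha k}) M_1(\overline{\alpha k})$, matrice symétrique de déterminant $\overline{1}$ et de trace $\overline{-k^2 - 2}$, on a $D = A^{(n-2)/2}$ dans le cas (i) et $D = M_1(\overline{\alpha k}) A^{(n-3)/2}$ dans le cas (ii). L'hypothèse que le $n$-uplet soit solution donne $M_1(\overline{b}) D M_1(\overline{a}) = \overline{\epsilon}\, Id$ pour un certain $\epsilon \in \{\pm 1\}$, ce qui se réécrit
\[D = \overline{\epsilon} \begin{pmatrix} \overline{-1} & \overline{a} \\ \overline{-b} & \overline{ab - 1} \end{pmatrix}.\]

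Pour la première conclusion : dans le cas (i), $D = A^r$ est symétrique, donc l'égalité des coefficients hors-diagonaux dans les deux expressions de $D$ donne directement $\overline{a} = \overline{-b}$. Dans le cas (ii), j'appliquerai Cayley-Hamilton à $A$ pour écrire $A^s = p_s A + q_s Id$ avec $p_s, q_s \in \mathbb{Z}/N\mathbb{Z}$, puis je développerai $D = M_1(\overline{\alpha k})(p_s A + q_s Id)$ ; un calcul direct révèle alors la relation anti-symétrique $D_{12} = -D_{21}$ qui, combinée avec la matrice ci-dessus, donne $\overline{a} = \overline{b}$.

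Pour les relations quadratiques, je compare les deux expressions de $D$ coefficient par coefficient. Dans le cas (i), en écrivant $A^r = p_r A + q_r Id$, on obtient les trois équations $\overline{p_r \alpha k} = \overline{\epsilon a}$, $\overline{q_r - p_r(k^2+1)} = \overline{-\epsilon}$ et $\overline{q_r - p_r} = \overline{-\epsilon(a^2 + 1)}$ ; la différence des deux dernières donne $\overline{p_r k^2} = \overline{-\epsilon a^2}$, puis en multipliant la première par $\overline{k}$ et en substituant on élimine $p_r$ pour obtenir $\overline{a(\alpha k + a)} = \overline{0}$ (en utilisant $\alpha^2 = 1$). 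Dans le cas (ii), on procèdera par élimination analogue de $p_s, q_s$ à partir des quatre coefficients de $D$, ce qui conduira à $\overline{a(\alpha k + a)} = \overline{2}$. Le lemme \ref{41} incarne la symétrie sous-jacente à ces manipulations de signes.

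L'obstacle principal est le cas (ii) : puisque $D$ n'est plus symétrique, il faut développer le facteur supplémentaire $M_1(\overline{\alpha k})$ via Cayley-Hamilton pour révéler la structure anti-symétrique hors-diagonale, et l'élimination des scalaires auxiliaires $p_s, q_s$ demande un peu plus de soin que dans le cas (i).
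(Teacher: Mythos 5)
Your proposal is correct, but it follows a genuinely different route from the paper. The paper works entirely inside the continuant formalism: it reads off from $M_n=\overline{\epsilon}\,Id$ that the off-diagonal continuants vanish, expands them by the three-term recursion to express $\overline{a}$ and $\overline{b}$ as $\overline{-\epsilon}\,K_{n-3}$ of the alternating word read in the two opposite orders, and then invokes Lemme \ref{41} to compare these (whence $\overline{a}=\overline{-b}$ or $\overline{a}=\overline{b}$ according to parity); the quadratic relation is extracted by using $\det M_{n-2}(\overline{\alpha k},\ldots)=\overline{1}$ to rewrite $K_{n-4}$ in terms of $K_{n-3}^{2}$ and substituting back into $K_{n-2}=\overline{-\epsilon}$. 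You instead factor the matrix product as $M_1(\overline{b})\,D\,M_1(\overline{a})$, observe that $A=M_1(\overline{-\alpha k})M_1(\overline{\alpha k})$ is symmetric of determinant $\overline{1}$ and trace $\overline{-k^2-2}$, and use Cayley--Hamilton to place $D$ in the plane spanned by $Id$ and $A$ (resp.\ $M_1(\overline{\alpha k})$ and $M_1(\overline{\alpha k})A$); the (anti)symmetry of the off-diagonal entries then yields the relation between $\overline{a}$ and $\overline{b}$, and eliminating the two Cayley--Hamilton scalars from the coefficient equations yields the quadratic relations. I checked the eliminations you defer: in case (i) one gets $p_r\overline{k}^2=\overline{-\epsilon a^2}$ and $p_r\overline{\alpha k}=\overline{\epsilon a}$, hence $\overline{a}(\overline{\alpha k}+\overline{a})=\overline{0}$; in case (ii) the entries give $p_s\overline{\alpha k}=\overline{\epsilon}(\overline{a^2}-\overline{1})$ and, after substituting $q_s$, $p_s\overline{\alpha k}=\overline{\epsilon}(\overline{1}-\overline{a\alpha k})$, hence $\overline{a}(\overline{\alpha k}+\overline{a})=\overline{2}$. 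Your method buys a more transparent handling of the signs — the symmetry of $A$ replaces Lemme \ref{41}, which your argument in fact never needs — and avoids the somewhat delicate bookkeeping of continuant arguments in reversed order; the paper's method stays uniform with the continuant machinery used throughout the rest of the text. One small caveat: your sketch should note explicitly that Cayley--Hamilton over the commutative ring $\mathbb{Z}/N\mathbb{Z}$ gives $A^2=(\mathrm{tr}\,A)A-Id$, so the existence of $p_r,q_r$ follows by induction without any hypothesis on $N$, which keeps the lemma valid for arbitrary $N$ as stated.
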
	
	
\begin{proof}

i) Supposons que $n$ est pair. Comme $(\overline{a},\overline{\alpha k},\overline{-\alpha k},\ldots,\overline{\alpha k},\overline{-\alpha k},\overline{b})$ est solution de \eqref{p}, il existe $\epsilon$ dans $\{-1,1\}$ tel que \begin{eqnarray*}
\overline{\epsilon} Id &=& M_{n}(\overline{a},\overline{\alpha k},\overline{-\alpha k},\ldots,\overline{\alpha k},\overline{-\alpha k},\overline{b}) \\
                          &=& \begin{pmatrix}
   K_{n}(\overline{a},\overline{\alpha k},\overline{-\alpha k},\ldots,\overline{\alpha k},\overline{-\alpha k},\overline{b})   & -K_{n-1}(\overline{\alpha k},\overline{-\alpha k},\ldots,\overline{\alpha k},\overline{-\alpha k},\overline{b}) \\
   K_{n-1}(\overline{a},\overline{\alpha k},\overline{-\alpha k},\ldots,\overline{\alpha k},\overline{-\alpha k}) & -K_{n-2}(\overline{\alpha k},\overline{-\alpha k},\ldots,\overline{\alpha k},\overline{-\alpha k})
\end{pmatrix}.\\
\end{eqnarray*}
Ainsi, \[K_{n-1}(\overline{a},\overline{\alpha k},\overline{-\alpha k},\ldots,\overline{\alpha k},\overline{-\alpha k})=-K_{n-1}(\overline{\alpha k},\overline{-\alpha k},\ldots,\overline{\alpha k},\overline{-\alpha k},\overline{b})=\overline{0}\]et \[K_{n-2}(\overline{\alpha k},\overline{-\alpha k},\ldots,\overline{\alpha k},\overline{-\alpha k})=-\overline{\epsilon}.\] Or, 
\begin{eqnarray*}
K_{n-1}(\overline{a},\overline{\alpha k},\overline{-\alpha k},\ldots,\overline{\alpha k},\overline{-\alpha k}) 
 &=& \overline{a}K_{n-2}(\overline{\alpha k},\overline{-\alpha k},\ldots,\overline{\alpha k},\overline{-\alpha k}) \\
 &-& K_{n-3}(\overline{-\alpha k},\overline{\alpha k},\overline{-\alpha k},\ldots,\overline{\alpha k},\overline{-\alpha k}) \\
 &=& \overline{-\epsilon a}-K_{n-3}(\overline{-\alpha k},\overline{\alpha k},\overline{-\alpha k},\ldots,\overline{\alpha k},\overline{-\alpha k}). \\
\end{eqnarray*}
Donc, comme $\overline{\epsilon}^{2}=\overline{1}$, on a \[\overline{a}=\overline{-\epsilon}K_{n-3}(\overline{-\alpha k},\overline{\alpha k},\overline{-\alpha k},\ldots,\overline{\alpha k},\overline{-\alpha k}).\] De même, on a
\begin{eqnarray*}
K_{n-1}(\overline{\alpha k},\overline{-\alpha k},\ldots,\overline{\alpha k},\overline{-\alpha k},\overline{b})
 &=& \overline{b}K_{n-2}(\overline{\alpha k},\overline{-\alpha k},\ldots,\overline{\alpha k},\overline{-\alpha k}) \\
 &-& K_{n-3}(\overline{\alpha k},\overline{-\alpha k},\ldots,\overline{\alpha k},\overline{-\alpha k},\overline{\alpha k}) \\
 &=& \overline{-\epsilon b}-K_{n-3}(\overline{\alpha k},\overline{-\alpha k},\ldots,\overline{\alpha k},\overline{-\alpha k},\overline{\alpha k}). \\
\end{eqnarray*}
Il en découle 
\begin{eqnarray*}
\overline{b} &=& \overline{-\epsilon}K_{n-3}(\overline{\alpha k},\overline{-\alpha k},\ldots,\overline{\alpha k},\overline{-\alpha k},\overline{\alpha k}) \\
             &=& \overline{(-\epsilon)}\overline{(-1)^{n-3}}K_{n-3}(\overline{-\alpha k},\overline{\alpha k},\ldots,\overline{-\alpha k},\overline{\alpha k},\overline{-\alpha k}) \\
						 &=& \overline{-a}~{\rm car}~n~{\rm est~pair}.\\
\end{eqnarray*}
						
\noindent De plus, on dispose des égalités ci-dessous : 
\begin{eqnarray*}
\overline{-\epsilon} &=& K_{n-2}(\overline{\alpha k},\overline{-\alpha k},\ldots,\overline{\alpha k},\overline{-\alpha k}) \\
                     &=& \overline{\alpha k}K_{n-3}(\overline{-\alpha k},\overline{\alpha k},\overline{-\alpha k},\ldots,\overline{\alpha k},\overline{-\alpha k})-K_{n-4}(\overline{\alpha k},\overline{-\alpha k},\ldots,\overline{\alpha k},\overline{-\alpha k})\\
\end{eqnarray*}										
et 
\\
\\$M_{n-2}(\overline{\alpha k},\overline{-\alpha k},\ldots,\overline{\alpha k},\overline{-\alpha k})$
\\
\\ $=\begin{pmatrix}
   K_{n-2}(\overline{\alpha k},\overline{-\alpha k},\ldots,\overline{\alpha k},\overline{-\alpha k})   & -K_{n-3}(\overline{-\alpha k},\overline{\alpha k},\overline{-\alpha k},\ldots,\overline{\alpha k},\overline{-\alpha k}) \\
   K_{n-3}(\overline{\alpha k},\overline{-\alpha k},\ldots,\overline{\alpha k},\overline{-\alpha k},\overline{\alpha k}) & -K_{n-4}(\overline{-\alpha k},\overline{\alpha k},\ldots,\overline{-\alpha k},\overline{\alpha k})
\end{pmatrix}$
\\
\\ $=\begin{pmatrix}
   K_{n-2}(\overline{\alpha k},\overline{-\alpha k},\ldots,\overline{\alpha k},\overline{-\alpha k})   & K_{n-3}(\overline{\alpha k},\overline{-\alpha k},\ldots,\overline{\alpha k},\overline{-\alpha k},\overline{\alpha k}) \\
   K_{n-3}(\overline{\alpha k},\overline{-\alpha k},\ldots,\overline{\alpha k},\overline{-\alpha k},\overline{\alpha k}) & -K_{n-4}(-\overline{\alpha k},\overline{\alpha k},\ldots,\overline{-\alpha k},\overline{\alpha k})
\end{pmatrix} \in SL_{2}(\mathbb{Z}/N\mathbb{Z})$.
\\
\\ \noindent On en déduit l'égalité suivante : \[-K_{n-2}(\overline{\alpha k},\overline{-\alpha k},\ldots,\overline{\alpha k},\overline{-\alpha k})K_{n-4}(\overline{-\alpha k},\overline{\alpha k},\ldots,\overline{-\alpha k},\overline{\alpha k})-K_{n-3}(\overline{\alpha k},\overline{-\alpha k},\ldots,\overline{\alpha k},\overline{-\alpha k},\overline{\alpha k})^{2}=\overline{1}.\] Or, comme $K_{n-2}(\overline{\alpha k},\overline{-\alpha k},\ldots,\overline{\alpha k},\overline{-\alpha k})=\overline{-\epsilon}$, on a \[\overline{\epsilon}K_{n-4}(-\overline{\alpha k},\overline{\alpha k},\ldots,\overline{-\alpha k},\overline{\alpha k})-K_{n-3}(\overline{\alpha k},\overline{-\alpha k},\ldots,\overline{\alpha k},\overline{-\alpha k},\overline{\alpha k})^{2}=\overline{1},\] c'est-à-dire 

\begin{eqnarray*}
K_{n-4}(\overline{-\alpha k},\overline{\alpha k},\ldots,\overline{-\alpha k},\overline{\alpha k}) &=& K_{n-4}(\overline{\alpha k},\overline{-\alpha k},\ldots,\overline{\alpha k},\overline{-\alpha k})~{\rm car}~n-4~{\rm est~pair}\\
                                                                                                  &=& \overline{\epsilon}(\overline{1}+K_{n-3}(\overline{\alpha k},\overline{-\alpha k},\ldots,\overline{\alpha k},\overline{-\alpha k},\overline{\alpha k})^{2}).\\
\end{eqnarray*}	
																																															
\noindent Ainsi, on a

\begin{eqnarray*}
\overline{-\epsilon} &=& \overline{\alpha k}K_{n-3}(\overline{-\alpha k},\overline{\alpha k},\overline{-\alpha k},\ldots,\overline{\alpha k},\overline{-\alpha k})-K_{n-4}(\overline{\alpha k},\overline{-\alpha k},\ldots,\overline{\alpha k},\overline{-\alpha k}) \\
                     &=& -\overline{\alpha k}K_{n-3}(\overline{\alpha k},\overline{-\alpha k},\ldots,\overline{\alpha k},\overline{-\alpha k},\overline{\alpha k})-\overline{\epsilon}(\overline{1}+K_{n-3}(\overline{\alpha k},\overline{-\alpha k},\ldots,\overline{\alpha k},\overline{-\alpha k},\overline{\alpha k})^{2}) \\ 
										 &=& -\overline{\alpha k}K_{n-3}(\overline{\alpha k},\overline{-\alpha k},\ldots,\overline{\alpha k},\overline{-\alpha k},\overline{\alpha k})-\overline{\epsilon}-\overline{\epsilon}K_{n-3}(\overline{\alpha k},\overline{-\alpha k},\ldots,\overline{\alpha k},\overline{-\alpha k},\overline{\alpha k})^{2} \\
										 &=&  \overline{-\alpha \epsilon k a}-\overline{\epsilon}-\overline{\epsilon a^{2}}.\\
\end{eqnarray*}

\noindent Donc, $\overline{0}=\overline{-\alpha \epsilon k a}-\overline{\epsilon a^{2}}=-\overline{\epsilon}\overline{a}(\overline{\alpha k}+\overline{a})$ et donc $\overline{0}=\overline{a}(\overline{\alpha k}+\overline{a})$.
\\
\\ii) Supposons que $n$ est impair. Comme $(\overline{a},\overline{\alpha k},\overline{-\alpha k},\ldots,\overline{\alpha k},\overline{-\alpha k},\overline{\alpha k},\overline{b})$ est solution de \eqref{p}, il existe $\epsilon$ dans $\{-1,1\}$ tel que \begin{eqnarray*}
\overline{\epsilon} Id &=& M_{n}(\overline{a},\overline{\alpha k},\overline{-\alpha k},\ldots,\overline{\alpha k},\overline{-\alpha k},\overline{\alpha k},\overline{b}) \\
                          &=& \begin{pmatrix}
   K_{n}(\overline{a},\overline{\alpha k},\overline{-\alpha k},\ldots,\overline{\alpha k},\overline{-\alpha k},\overline{\alpha k},\overline{b})   & -K_{n-1}(\overline{\alpha k},\overline{-\alpha k},\ldots,\overline{\alpha k},\overline{-\alpha k},\overline{\alpha k},\overline{b}) \\
   K_{n-1}(\overline{a},\overline{\alpha k},\overline{-\alpha k},\ldots,\overline{\alpha k},\overline{-\alpha k},\overline{\alpha k}) & -K_{n-2}(\overline{\alpha k},\overline{-\alpha k},\ldots,\overline{\alpha k},\overline{-\alpha k},\overline{\alpha k})
\end{pmatrix}.\\
\end{eqnarray*}
Donc, \[K_{n-1}(\overline{a},\overline{\alpha k},\overline{-\alpha k},\ldots,\overline{\alpha k},\overline{-\alpha k},\overline{\alpha k})=-K_{n-1}(\overline{\alpha k},\overline{-\alpha k},\ldots,\overline{\alpha k},\overline{-\alpha k},\overline{\alpha k},\overline{b})=\overline{0}\]et \[K_{n-2}(\overline{\alpha k},\overline{-\alpha k},\ldots,\overline{\alpha k},\overline{-\alpha k},\overline{\alpha k})=-\overline{\epsilon}.\] Or, 
\begin{eqnarray*}
K_{n-1}(\overline{a},\overline{\alpha k},\overline{-\alpha k},\ldots,\overline{\alpha k},\overline{-\alpha k},\overline{\alpha k}) 
 &=& \overline{a}K_{n-2}(\overline{\alpha k},\overline{-\alpha k},\ldots,\overline{\alpha k},\overline{-\alpha k},\overline{\alpha k}) \\
 &-& K_{n-3}(\overline{-\alpha k},\overline{\alpha k},\overline{-\alpha k},\ldots,\overline{\alpha k},\overline{-\alpha k},\overline{\alpha k}) \\
 &=& \overline{-\epsilon a}-K_{n-3}(\overline{-\alpha k},\overline{\alpha k},\overline{-\alpha k},\ldots,\overline{\alpha k},\overline{-\alpha k},\overline{\alpha k}). \\
\end{eqnarray*}
Ainsi, comme $\overline{\epsilon}^{2}=\overline{1}$, on a \[\overline{a}=\overline{-\epsilon}K_{n-3}(\overline{-\alpha k},\overline{\alpha k},\overline{-\alpha k},\ldots,\overline{\alpha k},\overline{-\alpha k},\overline{\alpha k}).\] De même, on a  
\begin{eqnarray*}
K_{n-1}(\overline{\alpha k},\overline{-\alpha k},\ldots,\overline{\alpha k},\overline{-\alpha k},\overline{\alpha k},\overline{b})
 &=& \overline{b}K_{n-2}(\overline{\alpha k},\overline{-\alpha k},\ldots,\overline{\alpha k},\overline{-\alpha k},\overline{\alpha k}) \\
 &-& K_{n-3}(\overline{\alpha k},\overline{-\alpha k},\ldots,\overline{\alpha k},\overline{-\alpha k}) \\
 &=& \overline{-\epsilon b}-K_{n-3}(\overline{\alpha k},\overline{-\alpha k},\ldots,\overline{\alpha k},\overline{-\alpha k}). \\
\end{eqnarray*}
Donc, 
\begin{eqnarray*}
\overline{b} &=& \overline{-\epsilon}K_{n-3}(\overline{\alpha k},\overline{-\alpha k},\ldots,\overline{\alpha k},\overline{-\alpha k}) \\
             &=& \overline{(-\epsilon)}\overline{(-1)^{n-3}}K_{n-3}(\overline{-\alpha k},\overline{\alpha k},\ldots,\overline{-\alpha k},\overline{\alpha k}) \\
						 &=& \overline{(-\epsilon)}K_{n-3}(\overline{-\alpha k},\overline{\alpha k},\ldots,\overline{-\alpha k},\overline{\alpha k})~{\rm car}~n-3~{\rm pair}\\
						 &=& \overline{a}. \\
\end{eqnarray*}
						
\noindent De plus, on a 
\begin{eqnarray*}
\overline{-\epsilon} &=& K_{n-2}(\overline{\alpha k},\overline{-\alpha k},\ldots,\overline{\alpha k},\overline{-\alpha k},\overline{\alpha k}) \\
                     &=& \overline{\alpha k}K_{n-3}(\overline{-\alpha k},\ldots,\overline{\alpha k},\overline{-\alpha k},\overline{\alpha k})-K_{n-4}(\overline{\alpha k},\overline{-\alpha k},\ldots,\overline{\alpha k},\overline{-\alpha k},\overline{\alpha k}) \\
\end{eqnarray*}
et 

$M_{n-2}(\overline{\alpha k},\overline{-\alpha k},\ldots,\overline{\alpha k},\overline{-\alpha k},\overline{\alpha k})$
\\
\\ $=\begin{pmatrix}
   K_{n-2}(\overline{\alpha k},\overline{-\alpha k},\ldots,\overline{\alpha k},\overline{-\alpha k},\overline{\alpha k})   & -K_{n-3}(\overline{-\alpha k},\overline{\alpha k},\ldots,\overline{-\alpha k},\overline{\alpha k}) \\
   K_{n-3}(\overline{\alpha k},\overline{-\alpha k},\ldots,\overline{\alpha k},\overline{-\alpha k}) & -K_{n-4}(\overline{-\alpha k},\ldots,\overline{\alpha k},\overline{-\alpha k})
\end{pmatrix}$
\\
\\ $=\begin{pmatrix}
   K_{n-2}(\overline{\alpha k},\overline{-\alpha k},\ldots,\overline{\alpha k},\overline{-\alpha k},\overline{\alpha k})   & -K_{n-3}(\overline{\alpha k},\overline{-\alpha k},\ldots,\overline{\alpha k},\overline{-\alpha k}) \\
   K_{n-3}(\overline{\alpha k},\overline{-\alpha k},\ldots,\overline{\alpha k},\overline{-\alpha k}) & -K_{n-4}(\overline{-\alpha k},\ldots,\overline{\alpha k},\overline{-\alpha k})
\end{pmatrix} \in SL_{2}(\mathbb{Z}/N\mathbb{Z})$.

\noindent Ainsi, \[-K_{n-2}(\overline{\alpha k},\overline{-\alpha k},\ldots,\overline{\alpha k},\overline{-\alpha k},\overline{\alpha k})K_{n-4}(\overline{-\alpha k},\ldots,\overline{\alpha k},\overline{-\alpha k})+K_{n-3}(\overline{\alpha k},\overline{-\alpha k},\ldots,\overline{\alpha k},\overline{-\alpha k})^{2}=\overline{1}.\] 
\\Or, comme $K_{n-2}(\overline{\alpha k},\overline{-\alpha k},\ldots,\overline{\alpha k},\overline{-\alpha k},\overline{\alpha k})=\overline{-\epsilon}$, on a \[\overline{\epsilon}K_{n-4}(\overline{-\alpha k},\ldots,\overline{\alpha k},\overline{-\alpha k})+K_{n-3}(\overline{\alpha k},\overline{-\alpha k},\ldots,\overline{\alpha k},\overline{-\alpha k})^{2}=\overline{1},\] c'est-à-dire \[K_{n-4}(\overline{-\alpha k},\ldots,\overline{\alpha k},\overline{-\alpha k})=\overline{\epsilon}(\overline{1}-K_{n-3}(\overline{\alpha k},\overline{-\alpha k},\ldots,\overline{\alpha k},\overline{-\alpha k})^{2}).\] 
\\Donc, on a
\begin{eqnarray*}
\overline{-\epsilon} &=& \overline{\alpha k}K_{n-3}(\overline{-\alpha k},\overline{\alpha k},\ldots,\overline{-\alpha k},\overline{\alpha k})+K_{n-4}(\overline{-\alpha k},\ldots,\overline{\alpha k},\overline{-\alpha k}) \\
                     &=& \overline{\alpha k}K_{n-3}(\overline{\alpha k},\overline{-\alpha k},\ldots,\overline{\alpha k},\overline{-\alpha k})+\overline{\epsilon}(\overline{1}-K_{n-3}(\overline{\alpha k},\overline{-\alpha k},\ldots,\overline{\alpha k},\overline{-\alpha k})^{2}) \\ 
										 &=& \overline{\alpha k}K_{n-3}(\overline{\alpha k},\overline{-\alpha k},\ldots,\overline{\alpha k},\overline{-\alpha k})+\overline{\epsilon}-\overline{\epsilon}K_{n-3}(\overline{\alpha k},\overline{-\alpha k},\ldots,\overline{\alpha k},\overline{-\alpha k})^{2} \\
										 &=&  \overline{-\alpha \epsilon k a}+\overline{\epsilon}-\overline{\epsilon a^{2}}.\\
\end{eqnarray*}

\noindent Donc, $\overline{-2\epsilon}=\overline{-\alpha \epsilon k a}-\overline{\epsilon a^{2}}=-\overline{\epsilon}\overline{a}(\overline{\alpha k}+\overline{a})$ et donc $\overline{2}=\overline{a}(\overline{\alpha k}+\overline{a})$.

\end{proof}

\begin{remark} 

{\rm Dans le cas où $n$ est pair, il est possible que $\overline{a} \neq \overline{0}$ et $\overline{a} \neq \pm \overline{k}$. Par exemple, si on pose $N=14$, le 18-uplet $(\overline{7},\overline{3},\overline{11},\overline{3},\overline{11},\overline{3},\overline{11},\overline{3},\overline{11},\overline{3},\overline{11},\overline{3},\overline{11},\overline{3},\overline{11},\overline{3},\overline{11},\overline{7})$ est solution de \eqref{p}.}

\end{remark}

\subsection{Preuve du théorème d'irréductibilité}

On peut maintenant démontrer le résultat principal.

\begin{proof}[Démonstration du théorème \ref{26}]

Soient $\overline{k} \in \mathbb{Z}/N\mathbb{Z}$, $\overline{k} \neq \overline{0}$, et $n \in \mathbb{N}^{*}$ tels que le $n$-uplet $(\overline{k},\overline{-k},\ldots,\overline{k},\overline{-k})$ d'éléments de $\mathbb{Z}/N\mathbb{Z}$ est une solution dynomiale minimale de \eqref{p}. On suppose que $\overline{k}^{2}+\overline{8}$ n'est pas un carré dans $\mathbb{Z}/N\mathbb{Z}$. On suppose par l'absurde que cette solution peut s'écrire comme une somme de deux solutions non triviales.
\\
\\$\overline{k} \neq \pm \overline{1}$ car $(\pm \overline{1})^{2}+\overline{8}=\overline{9}=\overline{3}^{2}$. Donc, si $n=4$, $(\overline{k},\overline{-k},\ldots,\overline{k},\overline{-k})$ est irréductible, puisque les solutions réductibles de \eqref{p} de taille 4 contiennent toujours $\pm \overline{1}$. On suppose maintenant $n \geq 6$.
\\
\\Il existe $(\overline{a_{1}},\ldots,\overline{a_{l}})$ et $(\overline{b_{1}},\ldots,\overline{b_{l'}})$ solutions de \eqref{p} différentes de $(\overline{0},\overline{0})$ avec $l+l'=n+2$ et $l,l' \geq 3$ tels que \[(\overline{k},\overline{-k},\ldots,\overline{k},\overline{-k}) \sim (\overline{b_{1}+a_{l}},\overline{b_{2}},\ldots,\overline{b_{l'-1}},\overline{b_{l'}+a_{1}},\overline{a_{2}},\ldots,\overline{a_{l-1}}).\] 
\\De plus, $\overline{k} \notin \{\overline{0},\overline{-1},\overline{1}\}$ donc $l,l'>3$. Il existe $\alpha$ dans $\{\pm 1\}$~ tel que \[(\overline{\alpha k},\overline{-\alpha k},\ldots,\overline{\alpha k},\overline{-\alpha k})=(\overline{b_{1}+a_{l}},\overline{b_{2}},\ldots,\overline{b_{l'-1}},\overline{b_{l'}+a_{1}},\overline{a_{2}},\ldots,\overline{a_{l-1}}).\] On a deux cas :

\begin{itemize}
\item \underline{Cas 1 :} Si $l$ est pair alors $l'=n+2-l$ est pair. On a donc \[(\overline{a_{1}},\ldots,\overline{a_{l}})=(\overline{a_{1}},\overline{\alpha k},\overline{-\alpha k},\ldots,\overline{\alpha k},\overline{-\alpha k},\overline{a_{l}}).\] Comme $(\overline{a_{1}},\ldots,\overline{a_{l}})$ est solution de \eqref{p}, on a, par le lemme \ref{42}, $\overline{a_{1}}=\overline{a}=\overline{-a_{l}}$ et $\overline{0}=\overline{a}(\overline{a}+\overline{\alpha k})$. Comme $N$ est premier, $\mathbb{Z}/N\mathbb{Z}$ est intègre et donc l'équation $\overline{0}=\overline{a}(\overline{a}+\overline{\alpha k})$ a pour solutions $\overline{a}=\overline{0}$ et $\overline{a}=\overline{ -\alpha k}$. 
\\
\\Si $\overline{a}=\overline{0}$ alors $(\overline{a_{2}},\ldots,\overline{a_{l-1}})=(\overline{\alpha k},\overline{-\alpha k},\ldots,\overline{\alpha k},\overline{-\alpha k}) \in (\mathbb{Z}/N\mathbb{Z})^{l-2}$ est encore solution de \eqref{p} ce qui contredit la minimalité de la solution (si $\alpha=-1$ alors $(\overline{-a_{2}},\ldots,\overline{-a_{l-1}})=(\overline{k},\overline{-k},\ldots,\overline{ k},\overline{-k})$ est encore solution). Donc, $\overline{a}=\overline{-\alpha k}$ et par minimalité de la solution on a $l \geq n$ ce qui implique $l' \leq 2$. Donc, $l'=2$ et $(\overline{b_{1}},\ldots,\overline{b_{l'}})=(\overline{0},\overline{0})$ ce qui est absurde.
\\
\item \underline{Cas 2 :} Si $l$ est impair alors $l'=n+2-l$ est impair. On a donc \[(\overline{a_{1}},\ldots,\overline{a_{l}})=(\overline{a_{1}},\overline{-\alpha k},\overline{\alpha k},\ldots,\overline{-\alpha k},\overline{\alpha k},\overline{-\alpha k},\overline{a_{l}})\] et \[(\overline{b_{1}},\ldots,\overline{b_{l'}})=(\overline{b_{1}},\overline{-\alpha k},\overline{\alpha k},\ldots,\overline{-\alpha k},\overline{\alpha k},\overline{-\alpha k},\overline{b_{l'}}).\]  Comme $(\overline{a_{1}},\ldots,\overline{a_{l}})$ et $(\overline{b_{1}},\ldots,\overline{b_{l'}})$ sont solutions de \eqref{p}, on a par le lemme \ref{42} :
\begin{itemize}[label=$\circ$]
\item $\overline{a_{1}}=\overline{a}=\overline{a_{l}}$ et $\overline{2}=\overline{a}(\overline{a}-\overline{\alpha k})$;
\item $\overline{b_{1}}=\overline{b}=\overline{b_{l'}}$ et $\overline{2}=\overline{b}(\overline{b}-\overline{\alpha k})$.
\\
\end{itemize}
$\overline{a}$ et $\overline{b}$ sont des racines de $P(X)=X(X-\overline{\alpha k})-\overline{2}=X^{2}-\overline{\alpha k}X-\overline{2}$. Comme $N$ est premier différent de 2, $\mathbb{Z}/N\mathbb{Z}$ est un corps de caractéristique différente de 2 et le discriminant de $P$ est $\Delta=(-\overline{\alpha k})^{2}-4\times \overline{-2}=\overline{k}^{2}+\overline{8}$. Comme $\overline{k}^{2}+\overline{8}$ n'est pas un carré dans $\mathbb{Z}/N\mathbb{Z}$, $P$ n'a pas de racine dans $\mathbb{Z}/N\mathbb{Z}$ ce qui est absurde.
\\
\end{itemize}

\noindent Ainsi, on arrive à une absurdité dans les deux cas et donc $(\overline{k},\overline{-k},\ldots,\overline{k},\overline{-k})$ est irréductible.

\end{proof}

\begin{remarks}

{\rm i) Il existe des solutions dynomiales minimales réductibles. Par exemple, posons $N=11$. La solution $\overline{2}$-dynomiale minimale (qui est de taille 12) est réductible. En effet, $(\overline{6},\overline{9},\overline{2},\overline{9},\overline{6})$ est solution de $(E_{11})$ et \[(\overline{2},\overline{9},\overline{2},\overline{9},\overline{2},\overline{9},\overline{2},\overline{9},\overline{2},\overline{9},\overline{2},\overline{9}) = (\overline{7},\overline{9},\overline{2},\overline{9},\overline{2},\overline{9},\overline{2},\overline{9},\overline{7}) \oplus (\overline{6},\overline{9},\overline{2},\overline{9},\overline{6}).\] Cet exemple montre que, contrairement aux solutions monomiales minimales, il existe des solutions dynomiales minimales réductibles même si $N$ est premier. De même, contrairement aux solutions monomiales minimales, il existe des solutions $\overline{2}$-dynomiales minimales réductibles.
\\
\\ii) La condition $\overline{k}^{2}+\overline{8}$ n'est pas un carré dans $\mathbb{Z}/N\mathbb{Z}$ n'est pas une condition nécessaire. Par exemple, $(\overline{6},\overline{-6},\overline{6},\overline{-6})$ est une solution dynomiale minimale irréductible pour $N=19$ et $\overline{6}^{2}+\overline{8}=\overline{36}+\overline{8}=\overline{6}=\overline{5}^{2}$.}

\end{remarks}

\subsection{Applications}
\label{app}

La condition $\overline{k}^{2}+\overline{8}$ n'est pas un carré dans $\mathbb{Z}/N\mathbb{Z}$ peut être vérifiée à l'aide la loi de réciprocité quadratique de Gauss (puisque $N$ est premier). Si $p$ est un nombre premier impair et si $a$ est un entier premier avec $p$, on note $\left(\dfrac{a}{p}\right)$ le symbole de Legendre c'est-à-dire :
\[\left(\dfrac{a}{p}\right)=\left\{
    \begin{array}{ll}
        1 & \mbox{si } \overline{a}~{\rm est~un~carr\acute{e}~dans}~\mathbb{Z}/p\mathbb{Z}; \\
        -1 & \mbox{sinon }.
    \end{array}
\right.  \\ \]

\noindent Le symbole de Legendre vérifie les propriétés suivantes :

\begin{lemma}[\cite{G}, proposition XII.20]
\label{43}
Soient $p$ un nombre premier impair et $a$ et $b$ deux entiers premiers avec $p$. On a :
\\i)~(critère d'Euler)~$\left(\dfrac{a}{p}\right) \equiv a^{\frac{p-1}{2}}~[p]$;
\\ii)~(multiplicativité)~$\left(\dfrac{ab}{p}\right)=\left(\dfrac{a}{p}\right)\left(\dfrac{b}{p}\right)$;
\\iii)~$\left(\dfrac{-1}{p}\right)=(-1)^{\frac{p-1}{2}}$.

\end{lemma}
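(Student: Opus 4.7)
Le plan consiste à établir d'abord (i), le critère d'Euler, et à en déduire (ii) et (iii) comme corollaires immédiats. Je commencerais par rappeler que $\mathbb{Z}/p\mathbb{Z}$ est un corps, donc $(\mathbb{Z}/p\mathbb{Z})^{*}$ est un groupe cyclique d'ordre $p-1$; je fixerais un générateur $g$ de ce groupe. Pour tout entier $a$ premier avec $p$, il existe alors un unique exposant $k \in \{0,\ldots,p-2\}$ tel que $\overline{a}=g^{k}$. L'observation structurelle centrale est que $\overline{a}$ est un carré dans $\mathbb{Z}/p\mathbb{Z}$ si et seulement si $k$ est pair : l'implication directe est immédiate ($\overline{a}=(g^{k/2})^{2}$), et la réciproque vient du fait que si $\overline{a}=g^{2m}$, alors $k \equiv 2m~[p-1]$ et comme $p-1$ est pair, $k$ est pair.

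Je calculerais ensuite $\overline{a}^{\frac{p-1}{2}} = g^{\frac{k(p-1)}{2}}$ et j'identifierais la valeur de $g^{\frac{p-1}{2}}$. Comme $(g^{\frac{p-1}{2}})^{2} = g^{p-1} = \overline{1}$ et que $\mathbb{Z}/p\mathbb{Z}$ est intègre, on a $g^{\frac{p-1}{2}} \in \{\overline{1}, -\overline{1}\}$; la valeur $\overline{1}$ est exclue puisque $g$ est d'ordre exactement $p-1 > \frac{p-1}{2}$. Donc $g^{\frac{p-1}{2}} = -\overline{1}$, d'où $\overline{a}^{\frac{p-1}{2}} = \overline{1}$ si $k$ est pair (i.e.\ si $\overline{a}$ est un carré) et $\overline{a}^{\frac{p-1}{2}} = -\overline{1}$ sinon. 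Ceci établit (i).

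Les points (ii) et (iii) en découlent directement. Pour (ii), on écrit par le critère d'Euler : $\left(\dfrac{ab}{p}\right) \equiv (ab)^{\frac{p-1}{2}} = a^{\frac{p-1}{2}} b^{\frac{p-1}{2}} \equiv \left(\dfrac{a}{p}\right)\left(\dfrac{b}{p}\right)~[p]$, et puisque les deux membres appartiennent à $\{-1,1\}$ et que $p$ est impair (donc $p \geq 3 > 2$), cette congruence entraîne l'égalité dans $\mathbb{Z}$. Pour (iii), on spécialise simplement le critère d'Euler à $a=-1$ : $\left(\dfrac{-1}{p}\right) \equiv (-1)^{\frac{p-1}{2}}~[p]$, et on conclut de même. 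L'obstacle principal, bien que classique, réside dans l'identification $g^{\frac{p-1}{2}} = -\overline{1}$ : elle repose sur l'intégrité du corps $\mathbb{Z}/p\mathbb{Z}$ (l'équation $X^{2}=\overline{1}$ n'y a que les deux racines évidentes) combinée à la propriété caractéristique du générateur $g$; le reste n'est qu'application directe de la structure cyclique et du fait que $p$ est impair.
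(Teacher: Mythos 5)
Le papier ne démontre pas ce lemme : il est énoncé comme un résultat classique emprunté à la référence citée (proposition XII.20 de Gozard), sans preuve. Votre démonstration est correcte et complète : l'argument via un générateur $g$ du groupe cyclique $(\mathbb{Z}/p\mathbb{Z})^{*}$, l'identification $g^{\frac{p-1}{2}}=-\overline{1}$ par intégrité du corps, puis la déduction de (ii) et (iii) à partir du critère d'Euler en utilisant que $\overline{1}\neq-\overline{1}$ pour $p$ impair, constituent la preuve standard de ces propriétés et ne présentent aucune lacune.
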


\begin{theorem}[Loi de réciprocité quadratique de Gauss; \cite{G}, Théorème XII.25 ]
\label{44}

Soient $p$ et $q$ deux nombres premiers impairs distincts. On a \[\left(\dfrac{p}{q}\right)\left(\dfrac{q}{p}\right)= (-1)^{\frac{p-1}{2}\frac{q-1}{2}}.\]

\end{theorem}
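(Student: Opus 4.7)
Le plan est de prouver la loi de réciprocité quadratique via le lemme de Gauss, suivi de l'argument de comptage de points du réseau dû à Eisenstein. Ce sont essentiellement les deuxième et troisième preuves historiques de Gauss.

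Première étape : établir le lemme de Gauss. Pour $p$ un nombre premier impair et $a$ un entier premier avec $p$, on considère les restes de plus petite valeur absolue, dans $\{-\frac{p-1}{2},\ldots,-1,1,\ldots,\frac{p-1}{2}\}$, des nombres $a, 2a, \ldots, \frac{p-1}{2}a$ modulo $p$. Si $\mu$ désigne le nombre de ces restes qui sont négatifs, alors $\left(\dfrac{a}{p}\right)=(-1)^{\mu}$. La preuve repose sur l'observation que les valeurs absolues de ces restes forment une permutation de $\{1,\ldots,\frac{p-1}{2}\}$, puis sur le calcul du produit de l'ensemble des restes et la comparaison avec $a^{\frac{p-1}{2}}(\frac{p-1}{2})!$ modulo $p$, en appliquant le critère d'Euler (lemme \ref{43}).

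Deuxième étape : transformer $\mu$ en une somme de parties entières. Pour $q$ un nombre premier impair distinct de $p$, on écrit $jq = p\lfloor jq/p \rfloor + r_{j}$ avec $0 \leq r_{j} < p$. Le reste de plus petite valeur absolue de $jq$ modulo $p$ est négatif si et seulement si $r_{j} > p/2$. En sommant cette relation pour $j=1,\ldots,(p-1)/2$ et en exploitant la permutation précédente, un calcul direct donne, modulo 2,
\[\mu \equiv T(q,p) := \sum_{j=1}^{(p-1)/2} \left\lfloor \frac{jq}{p} \right\rfloor \pmod{2},\]
d'où $\left(\dfrac{q}{p}\right) = (-1)^{T(q,p)}$, et symétriquement $\left(\dfrac{p}{q}\right) = (-1)^{T(p,q)}$.

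Étape géométrique finale : interpréter $T(q,p)$ comme un comptage de points du réseau. La quantité $T(q,p)$ compte les couples $(x,y) \in \mathbb{Z}^{2}$ avec $1 \leq x \leq (p-1)/2$ et $1 \leq y \leq qx/p$, c'est-à-dire les points du réseau strictement sous la droite $y=qx/p$ à l'intérieur du rectangle $R = \{1,\ldots,(p-1)/2\} \times \{1,\ldots,(q-1)/2\}$. De même, $T(p,q)$ compte les points de $R$ strictement au-dessus de cette droite. Comme $p$ et $q$ sont premiers entre eux et que $0 < x < p$, aucun point du réseau n'est sur la droite, d'où
\[T(q,p)+T(p,q)=\frac{p-1}{2}\cdot\frac{q-1}{2}.\]
En multipliant les deux symboles de Legendre, on obtient l'identité souhaitée.

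L'obstacle principal réside dans la deuxième étape, où il faut établir soigneusement la congruence $\mu \equiv T(q,p) \pmod{2}$ via un argument de comptage utilisant la permutation issue du lemme de Gauss et la parité de $q$. Une fois cette congruence obtenue, l'argument géométrique par le rectangle qui la suit est très propre et fournit directement la formule de réciprocité.
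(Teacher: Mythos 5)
Le texte ne démontre pas ce théorème : la loi de réciprocité quadratique y est citée comme résultat classique d'après la référence indiquée et utilisée telle quelle dans la suite ; il n'y a donc pas de preuve interne à laquelle comparer la vôtre. Votre plan est la démonstration classique par le lemme de Gauss combiné à l'argument de comptage de points du réseau d'Eisenstein, et il est correct. Le lemme de Gauss se déduit bien du critère d'Euler (lemme \ref{43}, i) par l'argument de permutation des valeurs absolues des restes que vous indiquez. La congruence $\mu \equiv T(q,p) \pmod 2$, que vous signalez vous-même comme le point délicat, s'obtient effectivement en sommant les égalités $jq = p\lfloor jq/p\rfloor + r_{j}$ pour $j=1,\ldots,\frac{p-1}{2}$ : on trouve $(q-1)\frac{p^{2}-1}{8} = p\,T(q,p) - \mu p + 2\sum_{r_{j}>p/2} r_{j}$, et la réduction modulo $2$ (en utilisant que $p$ et $q$ sont impairs) donne $\mu \equiv T(q,p)$. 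Enfin, la droite $y=qx/p$ ne contient aucun point du réseau dans le rectangle puisque $p$ et $q$ sont premiers entre eux, d'où $T(q,p)+T(p,q)=\frac{p-1}{2}\cdot\frac{q-1}{2}$ et la conclusion. Votre texte est une esquisse plutôt qu'une rédaction complète, mais chaque étape est standard et aucune ne pose de difficulté réelle.
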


Par exemple, si $\overline{k}=\overline{3}$, on a $\overline{k^{2}+8}=\overline{17}$. Comme 17 est premier, on peut utiliser la loi de réciprocité quadratique de Gauss pour savoir si $\overline{k^{2}+8}$ est un carré modulo $N$ (avec $N$ un entier premier supérieur à 5). On a alors par exemple :

\begin{proposition}
\label{45}

La solution $\overline{3}$-dynomiale minimale de $(E_{97})$ est irréductible.
	
\end{proposition}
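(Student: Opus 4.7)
The plan is to apply Théorème \ref{26} directly with $N=97$ and $\overline{k}=\overline{3}$. First I would verify the trivial hypotheses: $97$ is prime and $97 \geq 5$, and $\overline{3} \neq \overline{0}$ in $\mathbb{Z}/97\mathbb{Z}$. The only substantive task is then to check that $\overline{3}^{2}+\overline{8}=\overline{17}$ is not a square in $\mathbb{Z}/97\mathbb{Z}$, i.e.\ that $\left(\dfrac{17}{97}\right)=-1$.

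To compute this Legendre symbol, the natural approach is to use the quadratic reciprocity law (Théorème \ref{44}) together with the multiplicativity of the Legendre symbol (Lemme \ref{43} ii)). Since $17 \equiv 1 \pmod 4$ and $97 \equiv 1 \pmod 4$, the exponent $\tfrac{17-1}{2}\cdot\tfrac{97-1}{2}=8\cdot 48$ is even, so reciprocity gives $\left(\dfrac{17}{97}\right) = \left(\dfrac{97}{17}\right)$. Reducing $97 \equiv 12 \pmod{17}$ and factoring $12 = 4\cdot 3$, multiplicativity yields $\left(\dfrac{97}{17}\right) = \left(\dfrac{4}{17}\right)\left(\dfrac{3}{17}\right) = \left(\dfrac{3}{17}\right)$, since $4$ is obviously a square.

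Next I would apply reciprocity once more to $\left(\dfrac{3}{17}\right)$: here $\tfrac{3-1}{2}\cdot\tfrac{17-1}{2}=1\cdot 8=8$ is even, so $\left(\dfrac{3}{17}\right)=\left(\dfrac{17}{3}\right)=\left(\dfrac{2}{3}\right)$. Since the nonzero squares in $\mathbb{Z}/3\mathbb{Z}$ are just $\{\overline{1}\}$, we have $\left(\dfrac{2}{3}\right)=-1$. Chaining the equalities gives $\left(\dfrac{17}{97}\right)=-1$, so $\overline{17}$ is not a square in $\mathbb{Z}/97\mathbb{Z}$. The hypotheses of Théorème \ref{26} are satisfied, whence the $\overline{3}$-dynomial minimal solution of $(E_{97})$ is irreducible.

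There is no real mathematical obstacle here: the proof is a one-step application of Théorème \ref{26} preceded by a short computation. The only risk is an arithmetic slip in the cascade of Legendre symbols, so I would track each sign factor carefully and double-check the three reductions $97 \equiv 12 \pmod{17}$, $12 = 4 \cdot 3$, and $17 \equiv 2 \pmod 3$.
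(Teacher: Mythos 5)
Your proposal is correct and follows essentially the same route as the paper: verify the hypotheses of Théorème \ref{26} and compute $\left(\dfrac{17}{97}\right)=-1$ via quadratic reciprocity. The only (immaterial) difference is that the paper evaluates $\left(\dfrac{12}{17}\right)$ by listing the squares modulo $17$, whereas you iterate reciprocity once more down to $\left(\dfrac{2}{3}\right)$; both computations are correct.
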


\begin{proof}

97 est premier et $\overline{3} \notin \{\overline{0},\overline{1},\overline{-1}\}$. De plus, $97=5\times 17+12$ et les carrés modulo 17 sont :\[\{\overline{0},\overline{1},\overline{4},\overline{9},\overline{-1},\overline{8},\overline{2},\overline{15},\overline{13}\}.\] Donc, d'après la loi de réciprocité quadratique de Gauss, on a :
\[\left(\dfrac{17}{97}\right)=\left(\dfrac{97}{17}\right)(-1)^{\frac{17-1}{2}\frac{97-1}{2}}=\left(\dfrac{12}{17}\right)(-1)^{8 \times 48}=-1.\]
Donc, $\overline{3^{2}+8}$ n'est pas un carré modulo 97. Donc, par le théorème \ref{26}, la solution $\overline{3}$-dynomiale minimale de $(E_{97})$ est irréductible.

\end{proof}

On peut aussi s'intéresser aux cas de la solution $\overline{2}$-dynomiale minimale de \eqref{p} où $N$ est un entier premier supérieur à 5. On commence par le résultat intermédiaire ci-dessous :

\begin{lemma}
\label{46}

Soit $p$ un nombre premier. 
\[\overline{3}~{\rm est~un~carr\acute{e}~dans}~\mathbb{Z}/p\mathbb{Z} \Longleftrightarrow \left\{
    \begin{array}{ll}
        p=2;  \\
        p=3; \\
				p \equiv \pm 1~[12]. 
    \end{array}
\right.  \\ \]
	
\end{lemma}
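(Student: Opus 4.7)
La preuve que je propose se décompose en deux temps : traiter les cas spéciaux $p \in \{2,3\}$ à la main, puis utiliser la loi de réciprocité quadratique pour les nombres premiers $p \geq 5$.

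Pour les petits cas, on a immédiatement $\overline{3}=\overline{1}=\overline{1}^{2}$ dans $\mathbb{Z}/2\mathbb{Z}$ et $\overline{3}=\overline{0}=\overline{0}^{2}$ dans $\mathbb{Z}/3\mathbb{Z}$, donc $\overline{3}$ est bien un carré dans ces deux cas. Il reste à traiter le cas où $p \geq 5$ est un nombre premier impair distinct de 3, c'est-à-dire $\gcd(3,p)=1$, et à montrer que $\overline{3}$ est un carré dans $\mathbb{Z}/p\mathbb{Z}$ si et seulement si $p \equiv \pm 1~[12]$.

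Pour ce dernier cas, le plan est d'appliquer la loi de réciprocité quadratique (théorème \ref{44}) aux deux nombres premiers impairs distincts $3$ et $p$, qui donne
\[\left(\dfrac{3}{p}\right)\left(\dfrac{p}{3}\right)= (-1)^{\frac{3-1}{2}\cdot\frac{p-1}{2}}= (-1)^{\frac{p-1}{2}},\]
d'où $\left(\dfrac{3}{p}\right)= (-1)^{\frac{p-1}{2}}\left(\dfrac{p}{3}\right)$. On calcule ensuite chacun des deux facteurs. Le facteur $(-1)^{\frac{p-1}{2}}$ vaut $+1$ si $p \equiv 1~[4]$ et $-1$ si $p \equiv 3~[4]$ (on pourrait invoquer le point iii) du lemme \ref{43}). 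Le facteur $\left(\dfrac{p}{3}\right)$ ne dépend que de $p$ modulo $3$ : les carrés non nuls de $\mathbb{Z}/3\mathbb{Z}$ sont $\{\overline{1}\}$, donc $\left(\dfrac{p}{3}\right)=1$ si $p \equiv 1~[3]$ et $\left(\dfrac{p}{3}\right)=-1$ si $p \equiv 2~[3]$.

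Il suffit alors de combiner ces deux calculs via le théorème des restes chinois, en distinguant les quatre classes de $p$ modulo $12$ (qui sont les classes inversibles). On obtient, en posant $\epsilon_{1}=(-1)^{\frac{p-1}{2}}$ et $\epsilon_{2}=\left(\dfrac{p}{3}\right)$ :
\begin{itemize}
\item si $p \equiv 1~[12]$, alors $\epsilon_{1}=\epsilon_{2}=1$, donc $\left(\dfrac{3}{p}\right)=1$;
\item si $p \equiv 5~[12]$, alors $\epsilon_{1}=1$ et $\epsilon_{2}=-1$, donc $\left(\dfrac{3}{p}\right)=-1$;
\item si $p \equiv 7~[12]$, alors $\epsilon_{1}=-1$ et $\epsilon_{2}=1$, donc $\left(\dfrac{3}{p}\right)=-1$;
\item si $p \equiv 11~[12]$, alors $\epsilon_{1}=\epsilon_{2}=-1$, donc $\left(\dfrac{3}{p}\right)=1$.
\end{itemize}
Ceci conclut puisque $\left(\dfrac{3}{p}\right)=1$ est équivalent à dire que $\overline{3}$ est un carré dans $\mathbb{Z}/p\mathbb{Z}$. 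La preuve ne présente pas de véritable difficulté : tout l'argument repose sur la loi de réciprocité quadratique, déjà citée dans l'article, et la partie délicate se réduit à la gestion soigneuse des quatre cas modulo 12 pour éviter toute erreur de signe.
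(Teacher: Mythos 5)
Votre preuve est correcte et suit essentiellement la même démarche que celle de l'article : traitement direct des cas $p=2$ et $p=3$, puis application de la loi de réciprocité quadratique à $3$ et $p$ pour obtenir $\left(\frac{3}{p}\right)=(-1)^{\frac{p-1}{2}}\left(\frac{p}{3}\right)$, suivie d'une discussion de cas. La seule différence est cosmétique : vous énumérez les quatre classes inversibles modulo $12$ via le théorème des restes chinois, là où l'article écrit $p=3k\pm1$ avec $k$ pair et ramène la condition à $4\mid k$.
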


\begin{proof}

Si $p=2$ alors $\overline{3}=\overline{1}=\overline{1}^{2}$ et si $p=3$ alors $\overline{3}=\overline{0}=\overline{0}^{2}.$ On suppose maintenant $p$ supérieur à 5. D'après la loi de réciprocité quadratique de Gauss on a \[\left(\dfrac{3}{p}\right)=\left(\dfrac{p}{3}\right)(-1)^{\frac{p-1}{2}\frac{3-1}{2}}=\left(\dfrac{p}{3}\right)(-1)^{\frac{p-1}{2}}.\]
Comme $p$ est premier, $p \equiv 1~[3]$ ou $p \equiv -1~[3]$. De plus, on a $\left(\dfrac{1}{3}\right)=1$ et $\left(\dfrac{-1}{3}\right)=-1$. On distingue les deux cas :
\begin{itemize}
\item Si $p=3k+1$ avec $k$ un entier naturel pair
\[\left(\dfrac{3}{p}\right)=1 \Longleftrightarrow \left(\dfrac{1}{3}\right)(-1)^{\frac{3k}{2}}=1 \Longleftrightarrow (-1)^{\frac{3k}{2}}=1 \Longleftrightarrow 4~{\rm divise}~k \Longleftrightarrow p \equiv 1~[12].\]
\item Si $p=3k-1$ avec $k$ un entier naturel pair 
\[\left(\dfrac{3}{p}\right)=1 \Longleftrightarrow \left(\dfrac{-1}{3}\right)(-1)^{\frac{3k-2}{2}}=1 \Longleftrightarrow (-1)^{\frac{3k}{2}}=1 \Longleftrightarrow 4~{\rm divise}~k \Longleftrightarrow p \equiv -1~[12].\]
\end{itemize}

\end{proof}

\begin{proposition}
\label{47}

Si $N$ est un entier premier supérieur à 5 tel que $N \not\equiv \pm 1[12]$ alors la solution $\overline{2}$-dynomiale minimale de \eqref{p} est irréductible.
	
\end{proposition}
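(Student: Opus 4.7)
The plan is to reduce this directly to Theorem \ref{26} with $\overline{k} = \overline{2}$, after verifying that both hypotheses of that theorem are satisfied under the assumption $N$ prime with $N \geq 5$ and $N \not\equiv \pm 1 \ [12]$.

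First I would check the easy condition: since $N \geq 5$, we clearly have $\overline{2} \neq \overline{0}$ in $\mathbb{Z}/N\mathbb{Z}$. The substantive step is to verify that $\overline{k}^2 + \overline{8} = \overline{4} + \overline{8} = \overline{12}$ is not a square modulo $N$. Here the key observation is the factorization $\overline{12} = \overline{4} \cdot \overline{3} = \overline{2}^2 \cdot \overline{3}$, so by the multiplicativity of the Legendre symbol (Lemma \ref{43} ii)),
\[\left(\dfrac{12}{N}\right) = \left(\dfrac{4}{N}\right)\left(\dfrac{3}{N}\right) = \left(\dfrac{3}{N}\right),\]
and hence $\overline{12}$ is a square in $\mathbb{Z}/N\mathbb{Z}$ if and only if $\overline{3}$ is.

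Now I would invoke Lemma \ref{46}, which characterizes the primes modulo which $3$ is a square. Since $N \geq 5$, the exceptional cases $N = 2$ and $N = 3$ are excluded, so the criterion reduces to $N \equiv \pm 1 \ [12]$. Our hypothesis $N \not\equiv \pm 1 \ [12]$ precisely rules this out, so $\overline{3}$ is not a square modulo $N$, and therefore $\overline{12} = \overline{2}^2 + \overline{8}$ is not a square modulo $N$ either.

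Both hypotheses of Theorem \ref{26} being fulfilled, that theorem concludes directly that the $\overline{2}$-dynomial minimal solution of \eqref{p} is irreducible. There is no real obstacle here — the proposition is essentially a corollary obtained by specializing Theorem \ref{26} to $\overline{k} = \overline{2}$ and applying quadratic reciprocity via Lemma \ref{46}; the only point requiring a small care is noticing the factorization $\overline{12} = \overline{4}\cdot\overline{3}$ so that the question reduces to whether $\overline{3}$ is a quadratic residue modulo $N$.
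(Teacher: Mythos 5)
Your proof is correct and follows exactly the paper's own argument: reduce to Théorème \ref{26} by writing $\overline{2}^{2}+\overline{8}=\overline{12}=\overline{2}^{2}\cdot\overline{3}$, use the multiplicativity of the Legendre symbol to see that $\overline{12}$ is a square modulo $N$ iff $\overline{3}$ is, and conclude via the Lemme \ref{46} characterization that this fails when $N\not\equiv\pm 1\ [12]$. Nothing to add.
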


\begin{proof}

Par le lemme précédent, $\overline{3}$ est un carré modulo $N$ si et seulement si $N \equiv \pm 1 [12]$.
\\
\\$N$ est premier. De plus, on a $\overline{2^{2}+8}=\overline{12}$ et donc par multiplicativité du symbole de Legendre :
\[\left(\dfrac{12}{N}\right)=\left(\dfrac{2^{2} \times 3}{N}\right)=\left(\dfrac{2^{2}}{N}\right)\left(\dfrac{3}{N}\right)=\left(\dfrac{2}{N}\right)^{2}\left(\dfrac{3}{N}\right)=\left(\dfrac{3}{N}\right)=-1.\]
Donc, par le théorème \ref{26}, la solution $\overline{2}$-dynomiale minimale de \eqref{p} est irréductible.

\end{proof}

\begin{remark}

{\rm Par le théorème faible de la progression arithmétique de Dirichlet (voir \cite{G} proposition VII.13), il existe une infinité de nombres premiers supérieurs à 5 congrus à 1 modulo 12.}

\end{remark}

La condition de la proposition précédente n'est pas nécessaire. Par exemple, si $N=59 \equiv -1~[12]$  alors la solution $\overline{2}$-dynomiale minimale de \eqref{p} est irréductible. En effet, celle-ci est de taille 20 et la seule façon de la réduire est de trouver une solution de \eqref{p} de la forme $(\overline{a},\overline{-2},\overline{2},\overline{-2},\ldots,\overline{2},\overline{-2},\overline{a})$ de taille inférieure à 19 (car les éléments donnés dans le cas 1 de la preuve du théorème \ref{26} sont toujours valides). Par le lemme \ref{42}, les seules valeurs de $\overline{a}$ possibles sont $\overline{12}$ et $\overline{-10}$. Or, en calculant toutes les possibilités on ne trouve aucune solution de \eqref{p}, ce qui implique l'irréductibilité de la solution $\overline{2}$-dynomiale minimale de $(E_{59})$. Ceci nous amène au problème ouvert suivant :

\begin{pro}

Soit $N$ un nombre premier. Trouver des conditions nécessaires et suffisantes sur $N$ pour l'irréductibilité de la solution $\overline{2}$-dynomiale minimale de \eqref{p}.

\end{pro}

On donne en annexe \ref{B} les éléments concernant la réductibilité ou l'irréductibilité des solutions $\overline{2}$-dynomiales minimales de \eqref{p} pour les nombres premiers congrus à $\pm 1$ modulo 12 et inférieurs à 500.

\medskip

\noindent {\bf Remerciements}.
Je remercie Valentin Ovsienko pour son aide précieuse.

\appendix

\section{Taille des solutions $\overline{k}$-monomiales minimales pour les nombres premiers entre 11 et 47}
\label{A}

\begin{center}
\begin{tabular}{|c|c|c|c|c|c|c|c|c|c|c|c|}
\hline
  \multicolumn{1}{|c|}{\backslashbox{$\overline{k}$}{\vrule width 0pt height 1.25em$N$}}     & 11 & 13 & 17 & 19 & 23  & 29 & 31 & 37 & 41 & 43 & 47  \rule[-7pt]{0pt}{18pt} \\
  \hline
  $\overline{0}$ & 2 & 2 & 2 & 2 & 2 & 2 & 2 & 2 & 2 & 2 & 2   \rule[-7pt]{0pt}{18pt} \\
	\hline
  $\overline{1}$ & 3 & 3 & 3 & 3 & 3 & 3 & 3 & 3 & 3 & 3 & 3   \rule[-7pt]{0pt}{18pt} \\
	\hline
  $\overline{2}$  & 11 & 13 & 17 & 19 & 23 & 29 & 31 & 37 & 41 & 43 & 47   \rule[-7pt]{0pt}{18pt} \\
	\hline
  $\overline{3}$  & 5 & 7 & 9 & 9 & 12 & 7  & 15 & 19 & 10 & 22 & 8   \rule[-7pt]{0pt}{18pt} \\
  \hline
	$\overline{4}$  & 5  & 6 & 9 & 5 & 11  & 15 & 16 & 18 & 7 & 11 & 23    \rule[-7pt]{0pt}{18pt} \\
	\hline
	 $\overline{5}$ & 6 & 7 & 8 & 5 &  4 & 5 & 8 & 9 & 20 & 21 & 23  \rule[-7pt]{0pt}{18pt} \\
	\hline
	 $\overline{6}$ & 6 & 7 & 4 & 10 & 11 & 5 & 15 & 19 & 5 & 22 & 23  \rule[-7pt]{0pt}{18pt} \\
	\hline
	 $\overline{7}$ & 5 & 7 & 9 & 9 & 6 & 7 & 15 & 19 & 5 & 11 & 4   \rule[-7pt]{0pt}{18pt} \\
	\hline
	$\overline{8}$  & 5 & 7 & 8 & 10 & 12 & 15 & 4 & 19 & 21 & 7 & 24   \rule[-7pt]{0pt}{18pt} \\
	\hline
	$\overline{9}$  & 11 & 6 & 8 & 9 & 11 & 15 & 16 & 9 & 20 & 11 & 24   \rule[-7pt]{0pt}{18pt} \\
	\hline
	$\overline{10}$ & 3 & 7 & 9 & 9 & 11 & 14 & 16 & 19 & 21 & 21 & 23  \rule[-7pt]{0pt}{18pt} \\
	\hline
	$\overline{11}$ &   & 13 & 4 & 10 & 11 & 7 &16 & 19 & 7 & 21 & 24   \rule[-7pt]{0pt}{18pt} \\
	\hline
	$\overline{12}$ &   & 3 & 8 & 9 & 11 & 14 & 5 & 19 & 21 & 21 & 6   \rule[-7pt]{0pt}{20pt} \\
	\hline
	$\overline{13}$ &   &  & 9 & 10 & 11 & 14 & 5 & 19 & 20 & 21 & 23   \rule[-7pt]{0pt}{18pt} \\
	\hline
	$\overline{14}$ &  &  & 9 & 5 & 11 & 15 & 8 & 9 & 7 & 11 & 23  \rule[-7pt]{0pt}{18pt} \\
	\hline
	$\overline{15}$ &  &  & 17 & 5 & 12 & 15 & 15 & 6 & 20 & 7 & 12   \rule[-7pt]{0pt}{18pt} \\
	\hline
	$\overline{16}$ &  &  & 3 & 9 & 6 & 14 & 15 & 18 & 21 & 22 & 23   \rule[-7pt]{0pt}{18pt} \\
	\hline
	$\overline{17}$ &  &  &  & 19 & 11 & 14 & 8 & 18 & 4 & 22 & 23   \rule[-7pt]{0pt}{18pt} \\
	\hline
	$\overline{18}$ &  &  &  & 3 & 4 & 7 & 5 & 19 & 10 & 22 & 8   \rule[-7pt]{0pt}{18pt} \\
	\hline
	$\overline{19}$ &  &  &  &  & 11 & 14 & 5 & 19 & 21 & 7 & 23  \rule[-7pt]{0pt}{18pt} \\
	\hline
	$\overline{20}$ &  &  &  &  & 12 & 15 & 16 & 18 & 21 & 21 & 24  \rule[-7pt]{0pt}{18pt} \\
	\hline
	$\overline{21}$ &  &  &  &  & 23 & 15 & 16 & 18 & 21 & 11 & 23   \rule[-7pt]{0pt}{18pt} \\
	\hline
	$\overline{22}$ &  &  &  &  & 3 & 7 & 16 & 6 & 21 & 11 & 12    \rule[-7pt]{0pt}{18pt} \\
	\hline
	$\overline{23}$ &  &  &  &  &  & 5 & 4 & 9 & 10 & 21 & 23  \rule[-7pt]{0pt}{18pt} \\
	\hline

\end{tabular}
\end{center}

\section{Éléments sur la réductibilité des solutions $\overline{2}$-dynomiales minimales de \eqref{p} pour les nombres premiers congrus à $\pm 1$ modulo 12 et inférieurs à 500}
\label{B}

\begin{center}
\begin{tabular}{|c|c|c|p{2.5cm}|c|}
\hline
                  N & taille & réductibilité & \centering solutions de $X(X-\overline{2})=\overline{2}$ & exemple de solution permettant de réduire  \rule[-7pt]{0pt}{18pt} \\
  \hline
  11 & 12 & réductible & \centering $\overline{6},\overline{7}$ & $(\overline{6},\overline{-2},\overline{2},\overline{-2},\overline{6})$     \rule[-7pt]{0pt}{18pt} \\
	\hline
  13 & 14 & réductible & \centering $\overline{5},\overline{10}$  & $(\overline{5},\overline{-2},\overline{2},\overline{-2},\overline{5})$     \rule[-7pt]{0pt}{18pt} \\
	\hline
  23  & 22 & réductible & \centering $\overline{8},\overline{17}$ & $(\overline{17},\overline{-2},\overline{2},\overline{-2},\overline{2},\overline{-2},\overline{17})$    \rule[-7pt]{0pt}{18pt} \\
	\hline
  37  & 38 & réductible & \centering $\overline{16},\overline{23}$ & $(\overline{16},\overline{-2},\overline{2},\overline{-2},\overline{2},\overline{-2},\overline{2},\overline{-2},\overline{16})$     \rule[-7pt]{0pt}{18pt} \\
  \hline
	47  & 46  & réductible & \centering $\overline{13},\overline{36}$ & $(\overline{36},\overline{-2},\overline{2},\overline{-2},\ldots,\overline{2},\overline{-2},\overline{36}) \in (\mathbb{Z}/47\mathbb{Z})^{21}$      \rule[-7pt]{0pt}{18pt} \\
	\hline
	59 & 20 & irréductible & \centering $\overline{12},\overline{49}$ & \diagbox[width=80mm,height=7mm]{}{}  \rule[-7pt]{0pt}{18pt} \\
	\hline
	61 & 62 & réductible & \centering $\overline{9},\overline{54}$ & $(\overline{9},\overline{-2},\overline{2},\overline{-2},\overline{2},\overline{-2},\overline{2},\overline{-2},\overline{2},\overline{-2},\overline{9})$  \rule[-7pt]{0pt}{18pt} \\
	\hline
	71 & 70 & réductible & \centering $\overline{29},\overline{44}$ &  $(\overline{29},\overline{-2},\overline{2},\overline{-2},\overline{2},\overline{-2},\overline{29})$  \rule[-7pt]{0pt}{18pt} \\
	\hline
	73  & 36 & irréductible & \centering $\overline{22},\overline{53}$ & \diagbox[width=80mm,height=7mm]{}{}  \rule[-7pt]{0pt}{18pt} \\
	\hline
	83 & 84 & réductible & \centering $\overline{-12},\overline{14}$ &  $(\overline{14},\overline{-2},\overline{2},\overline{-2},\overline{2},\overline{-2},\overline{2},\overline{-2},\overline{2},\overline{-2},\overline{2},\overline{-2},\overline{14})$   \rule[-7pt]{0pt}{18pt} \\
	\hline
	97 & 48 & réductible & \centering $\overline{-9},\overline{11}$ &  $(\overline{11},\overline{-2},\overline{2},\overline{-2},\ldots,\overline{2},\overline{-2},\overline{11}) \in (\mathbb{Z}/97\mathbb{Z})^{29}$   \rule[-7pt]{0pt}{18pt} \\
	\hline
	 107 & 108 & réductible & \centering $\overline{-17},\overline{19}$ & $(\overline{19},\overline{-2},\overline{2},\overline{-2},\ldots,\overline{2},\overline{-2},\overline{19}) \in (\mathbb{Z}/107\mathbb{Z})^{23}$    \rule[-7pt]{0pt}{18pt} \\
	\hline
	109 & 110 & réductible & \centering $\overline{-59},\overline{61}$ & $(\overline{61},\overline{-2},\overline{2},\overline{-2},\ldots,\overline{2},\overline{-2},\overline{61}) \in (\mathbb{Z}/109\mathbb{Z})^{23}$     \rule[-7pt]{0pt}{20pt} \\
	\hline
	 131 & 132 & réductible & \centering $\overline{-92},\overline{94}$ & $(\overline{94},\overline{-2},\overline{2},\overline{-2},\ldots,\overline{2},\overline{-2},\overline{94}) \in (\mathbb{Z}/131\mathbb{Z})^{31}$    \rule[-7pt]{0pt}{18pt} \\
	\hline
	157 & 158 & réductible & \centering $\overline{-84},\overline{86}$ & $(\overline{-84},\overline{-2},\overline{2},\overline{-2},\ldots,\overline{2},\overline{-2},\overline{-84}) \in (\mathbb{Z}/157\mathbb{Z})^{59}$    \rule[-7pt]{0pt}{18pt} \\
	\hline
	167 & 166 & réductible & \centering $\overline{-104},\overline{106}$ & $(\overline{-104},\overline{-2},\overline{2},\overline{-2},\ldots,\overline{2},\overline{-2},\overline{-104}) \in (\mathbb{Z}/167\mathbb{Z})^{13}$     \rule[-7pt]{0pt}{18pt} \\
	\hline
	 179 & 36 & irréductible & \centering $\overline{-18},\overline{20}$ & \diagbox[width=80mm,height=7mm]{}{}    \rule[-7pt]{0pt}{18pt} \\
	\hline
	 181 & 182 & réductible & \centering $\overline{-32},\overline{34}$ & $(\overline{34},\overline{-2},\overline{2},\overline{-2},\ldots,\overline{2},\overline{-2},\overline{34}) \in (\mathbb{Z}/181\mathbb{Z})^{21}$     \rule[-7pt]{0pt}{18pt} \\
	\hline
	191 & 190 & réductible & \centering $\overline{-23},\overline{25}$ &  $(\overline{25},\overline{-2},\overline{2},\overline{-2},\ldots,\overline{2},\overline{-2},\overline{25}) \in (\mathbb{Z}/191\mathbb{Z})^{69}$    \rule[-7pt]{0pt}{18pt} \\
	\hline
	193 & 96 & irréductible & \centering $\overline{-13},\overline{15}$ & \diagbox[width=80mm,height=7mm]{}{}    \rule[-7pt]{0pt}{18pt} \\
	\hline
	227 & 76 & irréductible & \centering $\overline{-49},\overline{51}$ & \diagbox[width=80mm,height=7mm]{}{}   \rule[-7pt]{0pt}{18pt} \\
	\hline
	229 & 46 & irréductible & \centering $\overline{-157},\overline{159}$ & \diagbox[width=80mm,height=7mm]{}{}    \rule[-7pt]{0pt}{18pt} \\
	\hline
	239 & 14 & irréductible & \centering $\overline{-132},\overline{134}$ & \diagbox[width=80mm,height=7mm]{}{}    \rule[-7pt]{0pt}{18pt} \\
	\hline
	241 & 40 & irréductible & \centering $\overline{-55},\overline{57}$ & \diagbox[width=80mm,height=7mm]{}{}  \rule[-7pt]{0pt}{18pt} \\
  \hline
  251 & 84 & irréductible & \centering $\overline{-174},\overline{176}$ & \diagbox[width=80mm,height=7mm]{}{}  \rule[-7pt]{0pt}{18pt} \\
	\hline
  263 & 262 & réductible & \centering $\overline{-22},\overline{24}$ & $(\overline{24},\overline{-2},\overline{2},\overline{-2},\ldots,\overline{2},\overline{-2},\overline{24}) \in (\mathbb{Z}/263\mathbb{Z})^{85}$      \rule[-7pt]{0pt}{18pt} \\
	\hline
   277 & 278 & réductible & \centering $\overline{-146},\overline{148}$ &  $(\overline{148},\overline{-2},\overline{2},\overline{-2},\ldots,\overline{2},\overline{-2},\overline{148}) \in (\mathbb{Z}/277\mathbb{Z})^{83}$     \rule[-7pt]{0pt}{18pt} \\
	\hline
   311 & 310 & réductible & \centering $\overline{-24},\overline{26}$  & $(\overline{26},\overline{-2},\overline{2},\overline{-2},\ldots,\overline{2},\overline{-2},\overline{26}) \in (\mathbb{Z}/311\mathbb{Z})^{31}$    \rule[-7pt]{0pt}{18pt} \\
  \hline
	 313 & 78  & irréductible & \centering $\overline{-229},\overline{231}$ & \diagbox[width=80mm,height=7mm]{}{}       \rule[-7pt]{0pt}{18pt} \\
	\hline
	 337 & 28 & irréductible & \centering $\overline{-44},\overline{46}$ & \diagbox[width=80mm,height=7mm]{}{}  \rule[-7pt]{0pt}{18pt} \\
	\hline

\end{tabular}
\end{center}

\begin{center}
\begin{tabular}{|c|c|c|p{2.5cm}|c|}
\hline
                  N & taille & réductibilité & \centering solutions de $X(X-\overline{2})=\overline{2}$ & exemple de solution permettant de réduire  \rule[-7pt]{0pt}{18pt} \\
	\hline
	347 & 348 & réductible & \centering $\overline{-251},\overline{253}$ &  $(\overline{-251},\overline{-2},\overline{2},\overline{-2},\ldots,\overline{2},\overline{-2},\overline{-251}) \in (\mathbb{Z}/347\mathbb{Z})^{165}$  \rule[-7pt]{0pt}{18pt} \\
	\hline
	 349 & 350 & réductible & \centering $\overline{-183},\overline{185}$ &  $(\overline{-183},\overline{-2},\overline{2},\overline{-2},\ldots,\overline{2},\overline{-2},\overline{-183}) \in (\mathbb{Z}/349\mathbb{Z})^{147}$   \rule[-7pt]{0pt}{18pt} \\
	\hline
	 359 & 358 & réductible & \centering $\overline{-195},\overline{197}$ & $(\overline{-195},\overline{-2},\overline{2},\overline{-2},\ldots,\overline{2},\overline{-2},\overline{-195}) \in (\mathbb{Z}/359\mathbb{Z})^{103}$    \rule[-7pt]{0pt}{18pt} \\
	\hline
	 373 & 374 & réductible & \centering $\overline{-241},\overline{243}$ & $(\overline{-241},\overline{-2},\overline{2},\overline{-2},\ldots,\overline{2},\overline{-2},\overline{-241}) \in (\mathbb{Z}/373\mathbb{Z})^{111}$    \rule[-7pt]{0pt}{18pt} \\
	\hline
	383 & 382 & réductible & \centering $\overline{-223},\overline{225}$ & $(\overline{225},\overline{-2},\overline{2},\overline{-2},\ldots,\overline{2},\overline{-2},\overline{225}) \in (\mathbb{Z}/383\mathbb{Z})^{111}$  \rule[-7pt]{0pt}{18pt} \\
	\hline
	397 & 398 & réductible & \centering $\overline{-19},\overline{21}$ &  $(\overline{21},\overline{-2},\overline{2},\overline{-2},\ldots,\overline{2},\overline{-2},\overline{21}) \in (\mathbb{Z}/397\mathbb{Z})^{131}$   \rule[-7pt]{0pt}{18pt} \\
	\hline
	409 & 102  & réductible & \centering $\overline{-240},\overline{242}$ & $(\overline{242},\overline{-2},\overline{2},\overline{-2},\ldots,\overline{2},\overline{-2},\overline{242}) \in (\mathbb{Z}/409\mathbb{Z})^{95}$    \rule[-7pt]{0pt}{20pt} \\
	\hline
	419 & 140 & réductible & \centering $\overline{-28},\overline{30}$ &  $(\overline{30},\overline{-2},\overline{2},\overline{-2},\ldots,\overline{2},\overline{-2},\overline{30}) \in (\mathbb{Z}/419\mathbb{Z})^{45}$   \rule[-7pt]{0pt}{18pt} \\
	\hline
	421 & 422 & réductible & \centering $\overline{-73},\overline{75}$ & $(\overline{75},\overline{-2},\overline{2},\overline{-2},\ldots,\overline{2},\overline{-2},\overline{75}) \in (\mathbb{Z}/421\mathbb{Z})^{207}$  \rule[-7pt]{0pt}{18pt} \\
	\hline
	431 & 430 & réductible & \centering $\overline{-35},\overline{37}$ &  $(\overline{37},\overline{-2},\overline{2},\overline{-2},\ldots,\overline{2},\overline{-2},\overline{37}) \in (\mathbb{Z}/431\mathbb{Z})^{25}$  \rule[-7pt]{0pt}{18pt} \\
	\hline
	433 & 216 & réductible & \centering $\overline{-50},\overline{52}$ &  $(\overline{52},\overline{-2},\overline{2},\overline{-2},\ldots,\overline{2},\overline{-2},\overline{52}) \in (\mathbb{Z}/433\mathbb{Z})^{123}$  \rule[-7pt]{0pt}{18pt} \\
	\hline
	443 & 148 & irréductible & \centering $\overline{-271},\overline{273}$ &  \diagbox[width=80mm,height=7mm]{}{}   \rule[-7pt]{0pt}{18pt} \\
	\hline
	457 & 114 & irréductible & \centering $\overline{-311},\overline{313}$ & \diagbox[width=80mm,height=7mm]{}{}   \rule[-7pt]{0pt}{18pt} \\
	\hline
	467 & 468 & réductible & \centering $\overline{-300},\overline{302}$ &  $(\overline{302},\overline{-2},\overline{2},\overline{-2},\ldots,\overline{2},\overline{-2},\overline{302}) \in (\mathbb{Z}/467\mathbb{Z})^{65}$  \rule[-7pt]{0pt}{18pt} \\
	\hline
	479 & 478 & réductible & \centering $\overline{-30},\overline{32}$ &  $(\overline{32},\overline{-2},\overline{2},\overline{-2},\ldots,\overline{2},\overline{-2},\overline{32}) \in (\mathbb{Z}/479\mathbb{Z})^{269}$  \rule[-7pt]{0pt}{18pt} \\
	\hline
	491 & 492 & réductible & \centering $\overline{-112},\overline{114}$ &  $(\overline{114},\overline{-2},\overline{2},\overline{-2},\ldots,\overline{2},\overline{-2},\overline{114}) \in (\mathbb{Z}/491\mathbb{Z})^{323}$   \rule[-7pt]{0pt}{18pt} \\
	\hline

\end{tabular}
\end{center}

\end{document}